\pdfoutput=1
\RequirePackage{ifpdf}
\ifpdf 
\documentclass[pdftex]{sigma}
\else
\documentclass{sigma}
\fi

\numberwithin{equation}{section}

\newtheorem{Theorem}{Theorem}[section]
\newtheorem{Lemma}[Theorem]{Lemma}
\newtheorem{Proposition}[Theorem]{Proposition}
 { \theoremstyle{definition}
\newtheorem{Definition}[Theorem]{Definition}
\newtheorem{Remark}[Theorem]{Remark} }

\begin{document}

\allowdisplaybreaks

\newcommand{\arXivNumber}{1704.04924}

\renewcommand{\PaperNumber}{072}

\FirstPageHeading

\ShortArticleName{On the Automorphisms of a Rank One Deligne--Hitchin Moduli Space}

\ArticleName{On the Automorphisms of a Rank One\\ Deligne--Hitchin Moduli Space}

\Author{Indranil BISWAS~$^\dag$ and Sebastian HELLER~$^\ddag$}

\AuthorNameForHeading{I.~Biswas and S.~Heller}

\Address{$^\dag$~School of Mathematics, Tata Institute of Fundamental Research,\\
\hphantom{$^\dag$}~Homi Bhabha Road, Mumbai 400005, India}
\EmailD{\href{mailto:indranil@math.tifr.res.in}{indranil@math.tifr.res.in}}

\Address{$^\ddag$~Institut f\"ur Differentialgeometrie, Universit\"at Hannover,\\
\hphantom{$^\ddag$}~Welfengarten 1, D-30167 Hannover, Germany}
\EmailD{\href{mailto:seb.heller@gmail.com}{seb.heller@gmail.com}}

\ArticleDates{Received May 13, 2017, in f\/inal form September 01, 2017; Published online September 06, 2017}

\Abstract{Let $X$ be a compact connected Riemann surface of genus $g \geq 2$, and let ${\mathcal M}_{\rm DH}$ be the rank one Deligne--Hitchin moduli space associated to $X$. It is known that ${\mathcal M}_{\rm DH}$ is the twistor space for the hyper-K\"ahler structure on the moduli space of rank one holomorphic connections on $X$. We investigate the group $\operatorname{Aut}({\mathcal M}_{\rm DH})$ of all holomorphic automorphisms of ${\mathcal M}_{\rm DH}$. The connected component of $\operatorname{Aut}({\mathcal M}_{\rm DH})$ containing the identity automorphism is computed. There is a natural element of $H^2({\mathcal M}_{\rm DH}, {\mathbb Z})$. We also compute the subgroup of~$\operatorname{Aut}({\mathcal M}_{\rm DH})$ that f\/ixes this second cohomology class. Since ${\mathcal M}_{\rm DH}$ admits an ample rational curve, the notion of algebraic dimension extends to it by a theorem of Verbitsky. We prove that ${\mathcal M}_{\rm DH}$ is Moishezon.}

\Keywords{Hodge moduli space; Deligne--Hitchin moduli space; $\lambda$-connections; Moishezon twistor space}

\Classification{14D20; 14J50; 14H60}

\section{Introduction}

The moduli spaces of Higgs bundles on a compact Riemann surface arise in various contexts and are extensively studied. One of the reasons for their usefulness is the nonabelian Hodge correspondence which identif\/ies the moduli space of semi-stable Higgs bundles of rank $r$ and degree zero on a compact Riemann surface $X$ with the character
variety \begin{gather*}\operatorname{Hom}(\pi_1(X), \text{GL}(r,{\mathbb C}))/\!\!/ \text{PGL}(r,{\mathbb C})\end{gather*}
(see~\cite{Co, Do, H, Si}). This identif\/ication is a $C^\infty$ dif\/feomorphism between the open dense subset consisting of stable Higgs bundles and the open dense subset consisting of irreducible representations. However, this dif\/feomorphism is not holomorphic. In other words, there are two dif\/ferent complex structures on a moduli space of Higgs bundles, one from being the moduli space of Higgs bundles and the other given by the complex structure of the character variety via the above mentioned identif\/ication. In fact, these two complex structures are a part of a~natural hyper-K\"ahler structure on a moduli spaces of Higgs bundles over~$X$~\cite{H}. It was noticed by Deligne and Hitchin that the twistor space associated to this hyper-K\"ahler manifold also has an interpretation as a moduli space associated to~$X$. These twistor spaces are known as the Deligne--Hitchin moduli space; see~\cite{Si95, Si08} for many properties of these spaces.

In a recent interesting paper \cite{Ba}, Baraglia computed the various automorphism groups associated to a moduli space of Higgs bundles on~$X$ (like holomorphic automorphisms preserving the holomorphic structure, holomorphic isometries, hypercomplex automorphisms, hyper-K\"ahler automorphisms et cetera). In~\cite{Ba}, the structure group is $\text{SL}(r, {\mathbb C})$, hence the moduli space parametrizes Higgs bundles of f\/ixed determinant with the trace of the Higgs f\/ield being zero. Inspired by~\cite{Ba}, in \cite{BBS} the case of ${\mathbb C}^*$-bundles was investigated, where ${\mathbb C}^*$ is the multiplicative group of nonzero complex numbers. More precisely, the automorphism groups of the moduli spaces of rank one Higgs bundles, rank one holomorphic connections and ${\mathbb C}^*$-character varieties were studied.

Our main aim here is to investigate the automorphisms of the rank one Deligne--Hitchin moduli space. The Deligne--Hitchin moduli spaces are not algebraic varieties; they are complex manifolds. However, they are built out of Hodge moduli spaces which are complex quasiprojective varieties. The rank one Hodge moduli space ${\mathcal M}_{\rm Hod} = {\mathcal M}^X_{\rm Hod}(1)$ parametrizes all rank one $\lambda$-connections on a compact Riemann surface $X$ with $\lambda$ running over~$\mathbb C$. The rank one Deligne--Hitchin moduli space ${\mathcal M}_{\rm DH} = {\mathcal M}_{\rm DH}(X)$ is constructed by gluing ${\mathcal M}^X_{\rm Hod}(1)$ and ${\mathcal M}^{\overline X}_{\rm Hod}(1)$ over the inverse image of ${\mathbb C}^* \subset {\mathbb C}$, where ${\overline X}$ is the Riemann surface conjugate to~$X$.

It is natural to ask for the automorphism group of the rank one Deligne--Hitchin moduli space as it is the only case missing for rank one, see~\cite{BBS}. In general, automorphisms of moduli spaces associated to a Riemann surface play an important role in mirror symmetry and help to improve our understanding of these objects. The rank one case serves as a motivating and more simple example, where we can gain intuition for what should be expected in general. On the other hand, this task is also interesting for itself as we cannot use an algebraic structure of the moduli space, and algebraicity was at the heart of the proofs in \cite{BBS}. As a nice additional observation we would like to mention that the automorphisms of the Deligne--Hitchin moduli space are algebraic when restricted to some of its subspaces, i.e., to the Dolbeault, the de Rham and the Hodge moduli space. Finally, and most importantly, we hope that our techniques can be generalized to study automorphism groups of the Deligne--Hitchin moduli space for higher rank. This is of particular interest as certain sections thereof correspond to solutions to important geometric PDE's such as harmonic maps including Hitchin's self-duality equations, and certain automorphisms might give rise to transformations between some classes of PDE's.

We now describe the results proved here. Let $\operatorname{Aut}({\mathcal M}_{\rm Hod})$ be the group of all algebraic automorphisms of ${\mathcal M}_{\rm Hod}$, and let $\operatorname{Aut}({\mathcal M}_{\rm Hod})_0$ be the connected component of it containing the identity automorphism. The holomorphic cotangent bundle of $X$ will be denoted by~$K_X$.

\begin{Theorem}\label{ti1} The group $\operatorname{Aut}({\mathcal M}_{\rm Hod})_0$ fits in a short exact sequence of groups
\begin{gather*}
0 \longrightarrow {\mathbb V} \stackrel{\iota}{\longrightarrow} \operatorname{Aut}({\mathcal M}_{\rm Hod})_0 \stackrel{h}{\longrightarrow}
\operatorname{Pic}^0(X)\times {\mathbb C}^* \longrightarrow 0 ,
\end{gather*}
where ${\mathbb V}$ is the space of all algebraic maps from $\mathbb C$ to the affine space $H^0(X, K_X)$.
\end{Theorem}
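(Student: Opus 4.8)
The plan is to realize $\mathcal{M}_{\rm Hod}$ as a torsor, exhibit the two evident families of automorphisms, and then prove there are no others. Write $A=\operatorname{Pic}^0(X)$ and $V=H^0(X,K_X)$. The assignment $(L,D,\lambda)\mapsto(L,\lambda)$ presents $\mathcal{M}_{\rm Hod}$ as a torsor over $A\times\mathbb{C}$ under the trivial vector group with fibre $V$, since two $\lambda$-connections on a fixed $L$ differ by a holomorphic $1$-form. Moreover, under tensor product of $\lambda$-connections, the $\lambda$-map $\pi\colon\mathcal{M}_{\rm Hod}\to\mathbb{C}$ is a commutative group scheme whose fibre over $\lambda=0$ is $A\times V$ and whose fibres over $\lambda\neq 0$ are the universal vectorial extension $A^{\natural}$ of $A$ by $V$. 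First I would write down two kinds of automorphisms: the scaling $t\cdot(L,D,\lambda)=(L,tD,t\lambda)$ for $t\in\mathbb{C}^*$, and translation by a section of this group scheme. Because every morphism $\mathbb{A}^1\to A$ is constant, each section has constant underlying bundle $\xi\in A$; and the restriction of the torsor to $\{\xi\}\times\mathbb{C}$ is trivial (the base is affine), so sections exist over every $\xi$ and form a group $\mathrm{Sec}$ fitting in $0\to\mathbb{V}\to\mathrm{Sec}\to A\to 0$, where $\mathbb{V}=H^0(A\times\mathbb{C},\mathcal{O})\otimes V=\mathbb{C}[\lambda]\otimes V$ is exactly the space of algebraic maps $\mathbb{C}\to V$. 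Conjugating translations by scalings stays among translations, so the subgroup $G$ generated by $\mathrm{Sec}$ and $\mathbb{C}^*$ maps onto $A\times\mathbb{C}^*$ with kernel $\mathbb{V}$, giving the stated sequence. This is the routine half, and it fixes the meaning of $\iota$ and $h$.

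The substance is the reverse inclusion $\operatorname{Aut}(\mathcal{M}_{\rm Hod})_0\subseteq G$. Fix $F\in\operatorname{Aut}(\mathcal{M}_{\rm Hod})_0$. By functoriality of the Albanese variety, and since affine-bundle projections do not change the Albanese, the map $a\colon\mathcal{M}_{\rm Hod}\to A$ is intrinsic, so $F$ induces an element of $\operatorname{Aut}(A)_0$, i.e.\ a translation $t_{\xi}$. Composing $F$ with a section-translation in $G$ inducing $t_{\xi}$, I may assume $a\circ F=a$, so that $F$ preserves every Albanese fibre $a^{-1}(\xi)\cong\mathbb{A}^{g+1}$ (a $V$-torsor over the affine $\lambda$-line, hence trivial).

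Next I recover $\lambda$ up to scale. The union of all positive-dimensional compact subvarieties of $\mathcal{M}_{\rm Hod}$ is intrinsic, and I claim it is the Dolbeault fibre $\pi^{-1}(0)=A\times V$: any compact subvariety maps to a point of $\mathbb{C}$, the fibres $\pi^{-1}(\lambda)\cong A^{\natural}$ with $\lambda\neq 0$ contain no positive-dimensional compact subvariety (the extension class of $A^{\natural}$ is nondegenerate, so its maximal complete subgroup is trivial), while $\pi^{-1}(0)$ contains the abelian subvarieties $A\times\{v\}$. Hence $F$ preserves $\pi^{-1}(0)$, and on each Albanese fibre $F^{*}\lambda$ is a regular function vanishing exactly on the reduced hyperplane $\{\lambda=0\}$; it therefore equals a unit times $\lambda$, the unit is a constant (units on $\mathbb{A}^{g+1}$ are in $\mathbb{C}^*$), and this constant is a regular $\mathbb{C}^*$-valued function of $\xi\in A$, hence constant since $A$ is projective and connected. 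Thus $F^{*}\lambda=c\lambda$ for some $c\in\mathbb{C}^*$, and after composing with the scaling by $1/c$ I may assume $F$ covers the identity of $A\times\mathbb{C}$.

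It remains to show that an automorphism of the $V$-torsor $\mathcal{M}_{\rm Hod}\to A\times\mathbb{C}$ lying over the identity of the base is a translation by an element of $\mathbb{V}$, and I expect this to be the main obstacle. The danger is fibrewise nonlinear automorphisms of the $\mathbb{A}^{g}$-fibres, such as shears, which would genuinely exist were the affine bundle trivial; the point is to rule them out using its nontriviality. The plan is to observe that over overlaps the transition maps are translations by the torsor cocycle, so an automorphism over the base must intertwine them, forcing its fibrewise part to commute with translation by every cocycle value; because the extension class is nondegenerate in the $V$-directions—equivalently, the nonzero fibres are the universal vectorial extension $A^{\natural}$, whose only group automorphisms covering $\operatorname{id}_A$ is the identity—this fibrewise part must be a translation, whose amount is a global section of $\mathcal{O}_{A\times\mathbb{C}}\otimes V=\mathbb{C}[\lambda]\otimes V=\mathbb{V}$. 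Extracting translation-only behaviour from the nondegeneracy of the torsor class (equivalently, from the rigidity of $A^{\natural}$) is the crux; the Albanese reduction, the global-units argument, and the projectivity of $A$ are comparatively routine bookkeeping.
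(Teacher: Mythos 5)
Your ``routine half'' is sound and essentially coincides with the paper's constructions: your translations by sections of the group scheme $\pi$ are exactly the paper's automorphisms $(\lambda,L,D)\mapsto(\lambda,L\otimes L_0,D\otimes\mathrm{Id}+\lambda\,\mathrm{Id}\otimes D_0)$ and $\iota_v$, your scaling is the paper's $\mathbb{C}^*$-lift, and your Albanese reduction and compact-subvariety characterization of $\pi^{-1}(0)$ are legitimate variants of the paper's fibrewise arguments. The genuine gap is exactly where you flag it, and the mechanism you propose for closing it does not work. On an overlap $U_i\cap U_j$ the compatibility condition reads $F_j(u,v+c_{ij}(u))=F_i(u,v)+c_{ij}(u)$: it relates the \emph{two different} local representatives $F_i$, $F_j$ by conjugation with the translation by $c_{ij}(u)$; it does not force any single fibrewise map to commute with that translation. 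In fact the cocycle imposes no pointwise constraint at all: since $F$ preserves the $V$-torsor fibres, the displacement $G(p)=F(p)-p\in V$ is globally well defined, and the gluing identity $G_j(u,v+c_{ij}(u))=G_i(u,v)$ is satisfied automatically by \emph{any} global regular map $G\colon \mathcal{M}_{\rm Hod}\to V$; shears are not excluded by intertwining. Similarly, ``the only group automorphism of $A^{\natural}$ over $\operatorname{id}_A$ is the identity'' does not apply directly, because $F$ restricted to a fibre $\pi^{-1}(\lambda)$ is a priori only a \emph{variety} automorphism, not a group automorphism, and upgrading variety morphisms of $A^{\natural}$ to group homomorphisms itself requires the very input that is missing.

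That missing input is function-theoretic, and it is the one key fact the paper invokes, namely \cite[Proposition~2.2]{BBS}: the de Rham moduli space $f^\lambda\cong A^{\natural}$ ($\lambda\neq0$) admits no nonconstant algebraic functions (it is anti-affine; this is indeed equivalent to nondegeneracy of the universal extension class, so your instinct about the responsible ingredient is correct, only the way it acts is different). Granting it, the crux is immediate: $G|_{f^\lambda}\colon f^\lambda\to V$ is a regular map to an affine space, hence constant, so $F$ is translation by some $\delta(\lambda)\in V$ on each fibre with $\lambda\neq0$ --- note this kills shears by working on the whole fibre $f^\lambda$ at once, not fibre-by-fibre over $A$. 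One must then still show that $\lambda\mapsto\delta(\lambda)$ extends algebraically across $\lambda=0$ to an element of $\mathbb{V}=\mathbb{C}[\lambda]\otimes V$, a step your sketch omits: the paper does this by a local analytic argument near $\lambda=0$, while in your framework it follows from $\mathcal{O}\big(f^{-1}(\mathbb{C}^*)\big)=\mathbb{C}\big[\lambda,\lambda^{-1}\big]$ together with regularity along constant sections through $\lambda=0$; then $F=\iota_\delta$ on a dense open set, hence everywhere.
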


Def\/ine $\Gamma_X := \operatorname{Aut}(X)$ if $X$ is hyperelliptic, and $\Gamma_X = \operatorname{Aut}(X)\oplus ({\mathbb Z}/2{\mathbb Z})$ if $X$ is non-hyperelliptic. This group $\Gamma_X$ has a natural action on ${\mathcal M}_{\rm Hod}$ via holomorphic automorphisms; see Section~\ref{Autocoh}. Let
\begin{gather*}
\widetilde{\Gamma}_X := \operatorname{Aut}({\mathcal M}_{\rm Hod})_0\rtimes \Gamma_X
\end{gather*}
be the corresponding semi-direct product (see \eqref{tmu2}). There is a natural cohomology class
\begin{gather*}
\theta \in H^2({\mathcal M}_{\rm Hod}, {\mathbb Z}) ,
\end{gather*}
and the action of $\widetilde{\Gamma}_X$ on ${\mathcal M}_{\rm Hod}$ f\/ixes~$\theta$.

\begin{Proposition} Assume that ${\rm genus}(X) > 1$. The group $\widetilde{\Gamma}_X$ coincides with the subgroup of $\operatorname{Aut}({\mathcal M}_{\rm Hod})$ that fixes $\theta$.
\end{Proposition}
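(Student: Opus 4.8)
The plan is to show two inclusions. We already know from the statement preceding the proposition that $\widetilde{\Gamma}_X$ acts on ${\mathcal M}_{\rm Hod}$ fixing $\theta$, so $\widetilde{\Gamma}_X$ is contained in the subgroup $\operatorname{Aut}({\mathcal M}_{\rm Hod})^\theta$ of automorphisms fixing $\theta$; the content is the reverse inclusion. First I would observe that $\operatorname{Aut}({\mathcal M}_{\rm Hod})_0$ is a normal subgroup of $\operatorname{Aut}({\mathcal M}_{\rm Hod})$ (being the identity component), and that it is contained in $\operatorname{Aut}({\mathcal M}_{\rm Hod})^\theta$ because a continuous family of automorphisms starting at the identity must act trivially on the discrete group $H^2({\mathcal M}_{\rm Hod}, {\mathbb Z})$. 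Hence $\operatorname{Aut}({\mathcal M}_{\rm Hod})^\theta$ contains $\operatorname{Aut}({\mathcal M}_{\rm Hod})_0$ as a normal subgroup, and the whole problem descends to the component group: I must identify the image of $\operatorname{Aut}({\mathcal M}_{\rm Hod})^\theta$ in $\pi_0(\operatorname{Aut}({\mathcal M}_{\rm Hod})) = \operatorname{Aut}({\mathcal M}_{\rm Hod})/\operatorname{Aut}({\mathcal M}_{\rm Hod})_0$ and show it equals the image of $\Gamma_X$.

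The key geometric tool is the $\lambda$-projection. The Hodge moduli space carries a canonical algebraic map ${\mathcal M}_{\rm Hod} \to {\mathbb C}$ sending a $\lambda$-connection to $\lambda$, and I expect $\theta$ to be built from (a multiple of) the pullback of the hyperplane class on $\mathbb C$ together with the cohomology of the fibers, which are the Jacobian (at $\lambda = 0$ the Dolbeault/Higgs fiber, the cotangent bundle of $\operatorname{Pic}^0(X)$, and at $\lambda \neq 0$ the de Rham fiber of holomorphic connections). The strategy is to argue that any automorphism $F$ fixing $\theta$ must respect the structure encoded by $\theta$: in particular it must preserve, up to the discrete symmetries allowed, the $\lambda$-fibration, so that $F$ descends to an automorphism of the base $\mathbb C$ fixing $0$ and thus acting as scaling $\lambda \mapsto c\lambda$. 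Once the fibration is preserved, $F$ induces an automorphism of each fiber compatible with the fixed class, and I would use the known automorphism theory of the rank one Dolbeault and de Rham moduli spaces — essentially the results recalled from \cite{BBS} — to pin down the fiberwise behavior. The upshot should be that the obstruction to $F$ lying in $\operatorname{Aut}({\mathcal M}_{\rm Hod})_0$ is measured precisely by an element of $\Gamma_X$, namely an automorphism of $X$ together with (in the non-hyperelliptic case) the extra ${\mathbb Z}/2{\mathbb Z}$ coming from the involution that is not realized by $\operatorname{Aut}(X)$ but still preserves $\theta$.

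Concretely I would proceed as follows. Pick $F \in \operatorname{Aut}({\mathcal M}_{\rm Hod})^\theta$. I would first show that fixing $\theta$ forces $F$ to preserve the Dolbeault fiber over $\lambda = 0$ as a set, since this fiber is distinguished cohomologically (for instance as the locus where a natural line bundle or intersection-theoretic condition attached to $\theta$ degenerates). Then $F$ restricts to an automorphism of the Dolbeault moduli space ${\mathcal M}_{\rm Dol}$ fixing the restriction of $\theta$; by the rank one results of \cite{BBS} such automorphisms are accounted for by $\operatorname{Pic}^0(X)\rtimes (\text{scalings}) \rtimes \Gamma_X$. Modifying $F$ by an element of $\widetilde{\Gamma}_X$, I may assume $F$ fixes $\theta$ and induces the identity on $\lambda = 0$; I then need to show such an $F$ lies in $\operatorname{Aut}({\mathcal M}_{\rm Hod})_0$. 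This I would do by examining the action on the de Rham fibers and invoking rigidity: an automorphism trivial on the central fiber and preserving the whole fibration and $\theta$ is forced into the identity component. The hardest step, I expect, is the first one — extracting from the single cohomological condition ``$F^*\theta = \theta$'' enough geometric rigidity to conclude that $F$ respects the $\lambda$-fibration and cannot mix the fibers in unexpected ways; this is where the explicit description of $\theta$ and a careful intersection-theoretic or Hodge-theoretic computation on ${\mathcal M}_{\rm Hod}$ will be essential, and where the hyperelliptic versus non-hyperelliptic dichotomy that defines $\Gamma_X$ must emerge naturally.
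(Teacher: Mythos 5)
Your overall skeleton---quotient by the normal subgroup $\operatorname{Aut}({\mathcal M}_{\rm Hod})_0$ (which fixes $\theta$ by connectedness), restrict a $\theta$-fixing automorphism to the fiber $f^0$, classify what it can do there, and show the remainder lies in the identity component---is essentially the paper's. The genuine gap is in the step you yourself flag as the hardest and leave open: extracting preservation of the $\lambda$-fibration from the condition $F^*\theta=\theta$. This cannot work as you envisage, because $\theta$ knows nothing about the fibration: it is $\phi^*\theta_0$ as in \eqref{the}, the pullback of the theta-divisor class under the projection $\phi\colon(\lambda,L,D)\longmapsto L$ of \eqref{phi}, and there is no ``hyperplane class'' on the affine, contractible base $\mathbb C$. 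The correct mechanism, used in the paper and available from the proof of Theorem~\ref{thm1}, is function-theoretic and applies to \emph{every} algebraic automorphism, $\theta$-fixing or not: each fiber $f^\lambda$ with $\lambda\neq0$ admits no nonconstant algebraic functions \cite[Proposition~2.2]{BBS}, so $f\circ F$ is constant on it, and $f^0$ cannot be interchanged with any $f^\lambda$, $\lambda\in{\mathbb C}^*$, because $f^0$ contains the positive-dimensional projective variety $\operatorname{Pic}^0(X)$ while $f^\lambda\cong f^1\cong({\mathbb C}^*)^{2g}$ contains no compact subvariety of positive dimension. Once this replacement is made, your first step is routine; moreover, after $F$ preserves $f^0$, the image of the compact subvariety $\operatorname{Pic}^0(X)\times\{0\}$ is automatically of the form $\operatorname{Pic}^0(X)\times\{\omega\}$, since an abelian variety admits no nonconstant algebraic map to the affine space $H^0(X,K_X)$.

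Two further points need repair. First, the decisive input producing $\Gamma_X$ is not a quotable ``rank one result of \cite{BBS}'' about automorphisms of the Dolbeault space fixing a cohomology class; what the paper uses, and what you must name, is Weil's Hauptsatz \cite[p.~35]{We}: the automorphisms of the principally polarized abelian variety $\big(\operatorname{Pic}^0(X),\theta_0\big)$ are generated by translations (already realized inside $\operatorname{Aut}({\mathcal M}_{\rm Hod})_0$ by the tensoring automorphisms $\beta$ from the proof of Theorem~\ref{thm1}) together with $\Gamma_X$; this is exactly where $g>1$ and the hyperelliptic versus non-hyperelliptic dichotomy enter. Second, your closing ``rigidity'' assertion---that an automorphism which is the identity on the central fiber and preserves the fibration and $\theta$ must lie in $\operatorname{Aut}({\mathcal M}_{\rm Hod})_0$---is stated without proof; in the paper, once $T_1\circ T_0\circ T$ restricts to the identity on $\operatorname{Pic}^0(X)\times\{0\}$, one argues it is homotopic to the identity, and then the kernel analysis of Theorem~\ref{thm1} (the fiberwise difference $\delta_\lambda$ is a constant element of $H^0(X,K_X)$ varying holomorphically and extending across $\lambda=0$) identifies it with some $\iota_v\in\operatorname{Aut}({\mathcal M}_{\rm Hod})_0$. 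Finally, ``coincides'' means the homomorphism $\rho\colon\widetilde{\Gamma}_X\longrightarrow\operatorname{Aut}^\theta({\mathcal M}_{\rm Hod})$ of \eqref{rho} is an isomorphism, and you never address injectivity; the paper proves it by observing that $\phi$ induces an isomorphism on $H^1(\cdot,{\mathbb C})$, that $\operatorname{Aut}({\mathcal M}_{\rm Hod})_0$ acts trivially on $H^1({\mathcal M}_{\rm Hod},{\mathbb C})$, and that $\Gamma_X$ acts faithfully on $H^1\big(\operatorname{Pic}^0(X),{\mathbb C}\big)$.
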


Let ${\mathcal M}_{\rm DH} = {\mathcal M}_{\rm DH}(X)$ be the rank one Deligne--Hitchin moduli space for a compact Riemann surface $X$. Let $\operatorname{Aut}({\mathcal M}_{\rm DH})$ be the group of all holomorphic automorphisms of ${\mathcal M}_{\rm DH}$, and let $\operatorname{Aut}({\mathcal M}_{\rm DH})_0 \subset \operatorname{Aut}({\mathcal M}_{\rm DH})$ be the connected component of it containing the identity map. The moduli space of rank one connections on $X$ will be denoted by ${\mathcal M}_{dR}(X)$; it is an algebraic group.

\begin{Theorem}\label{ti2}
There is an isomorphism
\begin{gather*}\operatorname{Aut}({\mathcal M}_{\rm DH})_0 = \mathcal M_{dR}(X)\times \big(\big(H^0(X, K_X)\times\overline{H^0(X, K_X)}\big)\rtimes \mathbb C^*\big) .\end{gather*}
\end{Theorem}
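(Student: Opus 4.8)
The plan is to understand the connected automorphism group of the Deligne--Hitchin space by gluing together automorphisms of the two Hodge moduli spaces, just as $\mathcal{M}_{\rm DH}$ itself is glued from $\mathcal{M}^X_{\rm Hod}(1)$ and $\mathcal{M}^{\overline X}_{\rm Hod}(1)$. The key observation is that any holomorphic automorphism $F$ of $\mathcal{M}_{\rm DH}$ lying in the connected component of the identity must preserve the two open pieces (or at least interact with the $\lambda$-fibration $\mathcal{M}_{\rm DH} \to \mathbb{P}^1$ in a controlled way). So first I would analyze how $\operatorname{Aut}(\mathcal{M}_{\rm DH})_0$ acts on the natural map to $\mathbb{P}^1$ whose fibers are the $\lambda$-connection moduli at each $\lambda$. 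Since the generic fiber is an algebraic group (the de Rham moduli $\mathcal{M}_{dR}(X)$ for $\lambda \neq 0, \infty$, and the Dolbeault/Higgs moduli at $\lambda = 0, \infty$), an automorphism in the identity component should descend to an automorphism of $\mathbb{P}^1$ fixing the marked points $\{0,\infty\}$, giving the $\mathbb{C}^*$ factor that rescales $\lambda$.

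Next I would use Theorem~\ref{ti1} as the essential input. Restricting a Deligne--Hitchin automorphism to $\mathcal{M}^X_{\rm Hod}(1)$ (which is a subvariety of $\mathcal{M}_{\rm DH}$), one obtains, after a suitable argument, an element of $\operatorname{Aut}(\mathcal{M}_{\rm Hod})_0$, and similarly on the conjugate side. The short exact sequence of Theorem~\ref{ti1} decomposes each such Hodge automorphism into a translation part in $\mathbb{V}$ (algebraic maps $\mathbb{C} \to H^0(X,K_X)$) and a part mapping to $\operatorname{Pic}^0(X) \times \mathbb{C}^*$. The plan is to track which of these data extend holomorphically across the gluing over the inverse image of $\mathbb{C}^*$. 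The translations in the fiber directions should assemble, after imposing the holomorphicity/compatibility condition on the overlap, into the two finite-dimensional pieces $H^0(X,K_X)$ and $\overline{H^0(X,K_X)}$ — the conjugate copy arising precisely because the gluing involves $\overline X$ and complex conjugation of the $\lambda$-parameter. The $\operatorname{Pic}^0(X)$ translations together with the connection-compatibility should reassemble into the full de Rham moduli $\mathcal{M}_{dR}(X)$ acting by translation on $\mathcal{M}_{\rm DH}$, while the $\mathbb{C}^*$ factors on both sides are forced by the gluing to coincide and yield the single semidirect $\mathbb{C}^*$.

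The main obstacle I expect is controlling the extension across the overlap: an automorphism of $\mathcal{M}^X_{\rm Hod}(1)$ and one of $\mathcal{M}^{\overline X}_{\rm Hod}(1)$ must agree on the intersection (the inverse image of $\mathbb{C}^*$) after applying the gluing map, and this holomorphic matching condition is what cuts the product of two copies of $\operatorname{Aut}(\mathcal{M}_{\rm Hod})_0$ down to the stated answer. In particular, the infinite-dimensional space $\mathbb{V}$ of algebraic maps $\mathbb{C} \to H^0(X,K_X)$ on each side must be reconciled: the condition that the two affine-linear-in-$\lambda$ translation data glue holomorphically over $\mathbb{C}^*$ (where the coordinate on one side is essentially $\lambda^{-1}$ relative to the other) should force each map to be affine, i.e., reduce the infinite-dimensional $\mathbb{V}$ to finitely many coefficients, producing exactly $H^0(X,K_X) \times \overline{H^0(X,K_X)}$. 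I would handle this by writing the translation maps as power series in $\lambda$ and in $\lambda^{-1}$ and matching them on the overlap, showing only the constant and linear terms survive, and then verifying that the remaining $\operatorname{Pic}^0$ and $\mathbb{C}^*$ data glue to give $\mathcal{M}_{dR}(X) \rtimes \mathbb{C}^*$. Verifying that these surviving transformations genuinely extend to global holomorphic automorphisms of $\mathcal{M}_{\rm DH}$ — rather than merely being formal candidates — is the delicate point, and I would do this by exhibiting them explicitly on $\lambda$-connections and checking compatibility with the nonabelian Hodge / Deligne gluing.
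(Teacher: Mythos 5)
There is a genuine gap, and it sits at the foundation of your strategy. You propose to restrict an automorphism $T\in\operatorname{Aut}({\mathcal M}_{\rm DH})_0$ to the two Hodge pieces and invoke Theorem~\ref{ti1} on each side. But Theorem~\ref{ti1} (Theorem~\ref{thm1}) classifies \emph{algebraic} automorphisms of the quasi-projective variety ${\mathcal M}_{\rm Hod}$, whereas the restriction of a holomorphic automorphism of ${\mathcal M}_{\rm DH}$ to $f^{-1}({\mathbb C})$ is a priori only holomorphic. The proof of Theorem~\ref{thm1} collapses in the analytic category: it rests on the facts that $f^\lambda\cong({\mathbb C}^*)^{2g}$ admits no nonconstant algebraic functions and that there is no nonconstant algebraic map from an affine space to an abelian variety, and both fail holomorphically (the torus $({\mathbb C}^*)^{2g}$ carries many holomorphic functions, and ${\mathbb C}\longrightarrow\operatorname{Pic}^0(X)$ has many nonconstant holomorphic maps coming from the uniformization $H^1(X,{\mathcal O}_X)\longrightarrow\operatorname{Pic}^0(X)$). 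The same problem undermines your preliminary claim that $T$ ``should descend'' to an automorphism of ${\mathbb C}P^1$: the statement that $T$ maps fibers of $f$ to fibers of $f$ is exactly what is hard in the holomorphic setting, and your justification (the fibers are algebraic groups) does not prove it. Indeed, the paper stresses in the introduction that algebraicity of restrictions of Deligne--Hitchin automorphisms to the Hodge pieces is an \emph{observation deduced after the fact}, not an available input; your argument has the logical order reversed.

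The paper's actual route is twistor-theoretic and supplies precisely the ingredients your plan lacks. First, Lemma~\ref{sections} classifies \emph{all} holomorphic sections of $f\colon{\mathcal M}_{\rm DH}\longrightarrow{\mathbb C}P^1$: they are affine-linear in $\lambda$, $s(\lambda)=[\lambda,\overline{\partial}+\overline{\omega}+\lambda\overline{\eta},\lambda(\partial+\alpha)+\beta]$. Its proof, which matches connection $1$-forms across the gluing with ${\mathcal M}_{\rm Hod}(\overline X)$ using the lattice $\Lambda$ of integral harmonic forms and the first-order pole at $\lambda=\infty$, is the rigorous incarnation of your ``only the constant and linear terms survive'' heuristic --- but it is carried out for sections of the fibration, not for automorphisms, and that difference is what makes it provable. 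Second, Lemma~\ref{lemdeg} computes normal bundle degrees of rational curves, and together with the fact that any two points in distinct fibers lie on a section (degree one), this yields Lemma~\ref{tau-lemma}: fibers go to fibers and $T$ covers $\lambda\mapsto\tau\lambda$ or $\lambda\mapsto\tau/\lambda$. Only then does Lemma~\ref{almost-id} pin down the fiber-preserving automorphisms fixing the zero sections at $0$ and $\infty$, and Theorem~\ref{thmDHa} assembles the answer using the tensoring automorphisms $T_s$ of~\eqref{Ts}. Without the section classification and the degree argument, your gluing/matching step has nothing to act on, so the proposal as written cannot be completed.
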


The group structure of $\mathcal M_{dR}(X)\times \big(\big(H^0(X, K_X)\times\overline{H^0(X, K_X)}\big)\rtimes \mathbb C^*\big)$ in Theorem~\ref{ti2} is described in Theorem~\ref{thmDHa}.

The above group $\Gamma_X$ acts on ${\mathcal M}_{\rm DH}$. Let
\begin{gather*}
\widetilde{\Gamma}_{{\rm DH},X} := \operatorname{Aut}({\mathcal M}_{\rm DH})_0\rtimes \Gamma_X
\end{gather*}
be the corresponding semi-direct product. There is a natural cohomology class
\begin{gather*}
\theta_X \in H^2({\mathcal M}_{\rm DH}, {\mathbb Z}) ,
\end{gather*}
and the action of $\widetilde{\Gamma}_{{\rm DH},X}$ on ${\mathcal M}_{\rm DH}$ f\/ixes~$\theta_X$.

\begin{Theorem}Assume that ${\rm genus}(X) > 1$. The group $\widetilde{\Gamma}_{{\rm DH},X}$ coincides with the subgroup of~$\operatorname{Aut}({\mathcal M}_{\rm DH})$ that fixes~$\theta_X$.
\end{Theorem}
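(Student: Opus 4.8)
The plan is to prove the two inclusions separately. The inclusion $\widetilde{\Gamma}_{{\rm DH},X} \subseteq \{\Phi \in \operatorname{Aut}(\mathcal{M}_{\rm DH}) \mid \Phi^*\theta_X = \theta_X\}$ is already recorded in the paragraph preceding the statement, where it is noted that the action of $\widetilde{\Gamma}_{{\rm DH},X}$ fixes $\theta_X$. Hence the entire content lies in the reverse inclusion: I must show that every holomorphic automorphism $\Phi$ of $\mathcal{M}_{\rm DH}$ with $\Phi^*\theta_X = \theta_X$ belongs to $\operatorname{Aut}(\mathcal{M}_{\rm DH})_0 \rtimes \Gamma_X$. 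Throughout I use the hypothesis ${\rm genus}(X) > 1$.

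The central tool is the twistor fibration $\pi \colon \mathcal{M}_{\rm DH} \to \mathbb{P}^1$ obtained by gluing the two $\lambda$-projections of $\mathcal{M}^X_{\rm Hod}$ and $\mathcal{M}^{\overline X}_{\rm Hod}$ over $\mathbb{C}^*$; its fibers over $0$ and $\infty$ are the Dolbeault moduli spaces of $X$ and of $\overline X$. The first and principal step is to show that fixing $\theta_X$ forces $\Phi$ to be compatible with $\pi$, carrying fibers to fibers and inducing some $\overline\Phi \in \operatorname{Aut}(\mathbb{P}^1)$, and moreover that $\overline\Phi$ preserves the pair of distinguished fibers over $\{0,\infty\}$. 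The point is that $\theta_X$ is adapted to $\pi$ — its restriction to $\mathcal{M}^X_{\rm Hod}$ is the class $\theta$ of the Proposition — so an automorphism fixing $\theta_X$ cannot tilt the fibration into the fiber directions; the two Dolbeault fibers, being the only ones over which the $\lambda$-connection degenerates to a Higgs field, are then singled out as well.

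Granting this, after possibly composing $\Phi$ with an element of $\Gamma_X$ I may assume $\overline\Phi$ fixes both $0$ and $\infty$, so that $\Phi$ preserves the open piece $\mathcal{M}^X_{\rm Hod}$, the preimage under $\pi$ of $\mathbb{P}^1 \setminus \{\infty\}$. By the restriction principle announced in the introduction — automorphisms of $\mathcal{M}_{\rm DH}$ are algebraic on the Dolbeault, de Rham and Hodge subspaces — the restriction $\Phi|_{\mathcal{M}^X_{\rm Hod}}$ is an algebraic automorphism, and it still fixes $\theta = \theta_X|_{\mathcal{M}^X_{\rm Hod}}$. The Proposition therefore places it in $\widetilde{\Gamma}_X = \operatorname{Aut}(\mathcal{M}_{\rm Hod})_0 \rtimes \Gamma_X$. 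It then remains to run the matching computation at the gluing locus $\pi^{-1}(\mathbb{C}^*)$: one checks that such a restriction extends across $\lambda = \infty$ exactly to a global automorphism of the form described by Theorem~\ref{ti2} — the extension condition across $\infty$ being precisely what cuts the $\lambda$-family $\mathbb{V}$ of Theorem~\ref{ti1} down to the factor $H^0(X,K_X)\times\overline{H^0(X,K_X)}$ — whence $\Phi \in \operatorname{Aut}(\mathcal{M}_{\rm DH})_0 \rtimes \Gamma_X$, as desired.

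The step I expect to be the main obstacle is the first one, forcing compatibility with $\pi$: unlike $\mathcal{M}_{\rm Hod}$, the space $\mathcal{M}_{\rm DH}$ carries no global algebraic structure, so one cannot argue directly by algebraicity. Here I would use the ample rational curves on $\mathcal{M}_{\rm DH}$ (the twistor sections) and the Moishezon property established in this paper: together with Verbitsky's theorem these rigidify $\Phi$ enough to pin down its action along $\pi$ and on the base, and to guarantee the algebraicity of the restriction used above. The remaining care is the discrete bookkeeping in $\Gamma_X$ — in particular the appearance of the extra $\mathbb{Z}/2\mathbb{Z}$ (the inversion $L \mapsto L^{-1}$) precisely in the non-hyperelliptic case — which is handled through the canonical embedding of $X$ exactly as in the proof of the Proposition.
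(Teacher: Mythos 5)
There is a genuine gap, and it sits exactly where the hypothesis $\Phi^*\theta_X=\theta_X$ has to do its work. Every element of $\widetilde{\Gamma}_{{\rm DH},X}=\operatorname{Aut}({\mathcal M}_{\rm DH})_0\rtimes\Gamma_X$ covers a map of the base of the form $\lambda\longmapsto\tau\lambda$: the identity component does so by Theorem~\ref{thmDHa}, and $\Gamma_X$ acts by pull-back automorphisms as in \eqref{Teq}, which preserve \emph{every} fiber $f^\lambda$ and hence cover the identity of ${\mathbb C}P^1$. Consequently your step ``after possibly composing $\Phi$ with an element of $\Gamma_X$ I may assume $\overline\Phi$ fixes both $0$ and $\infty$'' cannot work: if $\overline\Phi$ swaps $0$ and $\infty$, no composition with elements of $\Gamma_X$ (nor of $\operatorname{Aut}({\mathcal M}_{\rm DH})_0$) changes that, and such swapping automorphisms really exist whenever $\operatorname{Pic}^0(X)\cong\operatorname{Pic}^0(\overline X)$ --- see \eqref{barTeq2}, which covers $\lambda\longmapsto 1/\lambda$ and is \emph{not} in $\widetilde{\Gamma}_{{\rm DH},X}$. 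So the whole content of the theorem is to rule out the swap of $f^0$ and $f^\infty$ using $\theta_X$, and your proposal never does this; the phrase ``$\theta_X$ is adapted to $\pi$, so $\Phi$ cannot tilt the fibration'' is not an argument. The paper's proof handles precisely this point: a swap would induce (by comparing maximal-dimensional compact complex submanifolds) a biholomorphism $\operatorname{Pic}^0(X)\longrightarrow\operatorname{Pic}^0(\overline X)$; since the unitary character-variety symplectic form changes sign with the orientation of the surface, one has $\theta_X=-\theta_{\overline X}$, so an automorphism fixing $\theta_X$ would have to carry the ample class $\theta_0$ to a negative class, which is impossible for projective manifolds. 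That positivity/orientation argument is the crux, and it is missing.

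Two secondary points. First, the fiber-to-fiber property and the preservation of the set $f^0\cup f^\infty$ need neither $\theta_X$ nor any Moishezon/Verbitsky input: the former is Lemma~\ref{tau-lemma} (a normal-bundle degree computation for sections of $f$, via Lemma~\ref{lemdeg}), and the latter follows from the compact-submanifold argument; invoking the Moishezon property here is both unnecessary and too vague to pin anything down. Second, your appeal to the ``restriction principle'' --- that a holomorphic automorphism restricts to an \emph{algebraic} automorphism of ${\mathcal M}_{\rm Hod}$, so that Proposition~\ref{propo1} applies --- is circular as a tool: in the paper that algebraicity is an a~posteriori observation deduced from the classification results, not an input. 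The paper instead finishes holomorphically, running the argument of Proposition~\ref{propo1} with its holomorphic counterparts (Lemma~\ref{almost-id}, Proposition~\ref{prop3}, Theorem~\ref{thmDHa}), together with Weil's theorem that automorphisms of $\operatorname{Pic}^0(X)$ fixing the principal polarization are generated by $\Gamma_X$ and translations.
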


Although ${\mathcal M}_{\rm DH}$ is noncompact, being a twistor space it contains ample rational curves. Verbitsky has shown that in such a context the notion of algebraic dimension continues to hold~\cite{Ve}.

We prove the following:

\begin{Proposition} The algebraic dimension of ${\mathcal M}_{\rm DH}(X)$ coincides with $\dim {\mathcal M}_{\rm DH}(X) = 1+2\cdot {\rm genus}(X)$, or in other words, ${\mathcal M}_{\rm DH}(X)$ is Moishezon\footnote{Here we are using the terminology of~\cite{Ve}; one of the referees pointed out that a more correct terminology would be \textit{Moishezon--Verbitsky manifold}.}.
\end{Proposition}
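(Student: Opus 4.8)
Write $g = {\rm genus}(X)$, so that $\dim \mathcal{M}_{\rm DH} = 1+2g$. Because $\mathcal{M}_{\rm DH}$ carries the ample rational twistor lines, Verbitsky's framework already gives a finite algebraic dimension with $a(\mathcal{M}_{\rm DH}) \le \dim \mathcal{M}_{\rm DH}$; the entire content of the proposition is therefore the reverse inequality, i.e.\ the construction of $1+2g$ algebraically independent global meromorphic functions. The plan is to start from the twistor projection $\pi \colon \mathcal{M}_{\rm DH} \to \mathbb{P}^1$, which records $\lambda$ on $\mathcal{M}_{\rm Hod}$ and $\mu=\lambda^{-1}$ on $\mathcal{M}^{\overline X}_{\rm Hod}(1)$. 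Since $\pi$ is holomorphic and surjective, the pullback of an affine coordinate on $\mathbb{P}^1$ is one nonconstant meromorphic function, and the generic fibre of $\pi$ is the quasiprojective group $\mathcal{M}_{dR}(X)\cong(\mathbb{C}^*)^{2g}$, of algebraic dimension $2g$. It then suffices to produce $2g$ meromorphic functions on $\mathcal{M}_{\rm DH}$ restricting to algebraically independent rational functions on this generic fibre.

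The naive attempt, namely to extend the rational functions of the Zariski-dense quasiprojective chart $\mathcal{M}_{\rm Hod}$ across the fibre $\pi^{-1}(\infty)$, is exactly where the main difficulty sits. The two Hodge charts are glued over $\mathbb{C}^*$ by the nonabelian Hodge (Riemann--Hilbert) correspondence, which is holomorphic but transcendental; consequently an algebraic function on one chart generically has an essential singularity along the special fibres $\pi^{-1}(0)$ and $\pi^{-1}(\infty)$ and fails to be meromorphic on all of $\mathcal{M}_{\rm DH}$. Controlling the behaviour across these two fibres is the crux of the whole argument, and it cannot be carried out with the algebraic structure of a single chart alone.

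To handle it I would pass to the flat hyper-K\"ahler description available in rank one: the Higgs moduli space is $T^*\operatorname{Pic}^0(X)$, and its twistor space, hence $\mathcal{M}_{\rm DH}$, is the quotient of the total space of $\mathcal{O}_{\mathbb{P}^1}(1)^{\oplus 2g}$ by the fibrewise translation action of the lattice $H^1(X,\mathbb{Z})$, the translating sections being the sections of $\mathcal{O}_{\mathbb{P}^1}(1)^{\oplus 2g}$ prescribed by the Hodge decomposition of $H^1(X,\mathbb{C})$. Under this identification the meromorphic functions on $\mathcal{M}_{\rm DH}$ are precisely the $H^1(X,\mathbb{Z})$-invariant meromorphic functions on the quasiprojective total space, and the ample rational curves of the Verbitsky picture are the sections coming from $H^0\big(\mathbb{P}^1,\mathcal{O}(1)^{\oplus 2g}\big)$, whose normal bundle $\mathcal{O}(1)^{\oplus 2g}$ is ample.

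The required invariant functions I would then obtain from a theta-type, automorphy-factor construction: quasi-periodic holomorphic functions on the total space whose automorphy factors under $H^1(X,\mathbb{Z})$ are governed by the intersection pairing on $H^1(X,\mathbb{Z})$. Because that pairing is the principal polarization of $\operatorname{Jac}(X)$, the Hodge--Riemann bilinear relations supply the positivity that makes enough such functions exist and vary holomorphically with $\zeta\in\mathbb{P}^1$; forming ratios with a common automorphy factor cancels the $\zeta$-dependent factors and yields genuine $H^1(X,\mathbb{Z})$-invariant meromorphic functions on the total space, hence meromorphic functions on $\mathcal{M}_{\rm DH}$ that separate the $2g$ fibre directions. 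Together with $\pi$ this produces the desired $1+2g$ algebraically independent meromorphic functions, so $a(\mathcal{M}_{\rm DH}) = \dim\mathcal{M}_{\rm DH}$ and $\mathcal{M}_{\rm DH}$ is Moishezon. The step I expect to be genuinely delicate is verifying that the automorphy factors cancel uniformly across $\pi^{-1}(0)$ and $\pi^{-1}(\infty)$, i.e.\ that the principal polarization renders the transcendental gluing compatible with the theta construction, since this is precisely the obstruction identified above.
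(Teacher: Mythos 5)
Your reduction of $\mathcal{M}_{\rm DH}$ to the quotient of the total space of $\mathcal{O}_{\mathbb{C}P^1}(1)^{\oplus 2g}$ by the lattice $H^1(X,\mathbb{Z})$ acting by translations along real holomorphic sections is correct (it is implicit in Lemmas~\ref{sections} and~\ref{almost-id} of the paper), and so is the upper bound $a(\mathcal{M}_{\rm DH})\le\dim\mathcal{M}_{\rm DH}$ from \cite{Ve}. But the lower bound, which you rightly call the entire content of the proposition, rests on the ``theta-type, automorphy-factor construction,'' and that step is not carried out: no automorphy factor is defined, no convergence is proved, no algebraic independence is checked, and the uniformity over $\lambda\in\mathbb{C}P^1$ is exactly the point you concede is open. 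This is not a deferrable verification; it is where the argument breaks. Concretely, the translating sections are $s_\gamma(\lambda)=(\gamma'',\lambda\gamma')$, so for any holomorphic quadratic exponent $Q$ on the fibres the quantity $Q(s_\gamma(\lambda))$ is a polynomial in $\lambda$; the negativity of its real part needed for convergence of a series $\sum_{\gamma}\exp Q(z+s_\gamma(\lambda))$ can then hold only for $\lambda$ confined to sectors of $\mathbb{C}^*$ (rotating $\lambda$ reverses signs), while $\lambda$-independent exponents are ruled out because ${\rm Re}\,Q(iv)=-{\rm Re}\,Q(v)$ prevents ${\rm Re}\,Q$ from being negative definite on a lattice of full real rank. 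One therefore gets at best sectorially defined quasi-periodic functions and a Stokes-type matching problem, which Hodge--Riemann positivity does not solve. That positivity of the polarization is not the operative mechanism is also shown by the compact analogue: the twistor space of a flat compact hyper-K\"ahler torus carries the same kind of integral pairing and positivity, yet it is not Moishezon, since its very general fibre is a complex torus of algebraic dimension zero and compact analytic subspaces of a Moishezon manifold are again Moishezon. Whatever makes your half-rank, $(\mathbb{C}^*)^{2g}$-fibre situation different is precisely what your sketch does not supply.

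The paper needs none of this, because it uses the other, stronger half of \cite{Ve}: the holography principle (Theorem~3.4 there), which says that for a manifold containing an ample rational curve the field of global meromorphic functions coincides with that of an arbitrarily small neighbourhood of the curve. Moishezon-ness is therefore a local question near a single twistor line, where the transcendental gluing across $f^0$ and $f^\infty$ --- the obstruction you correctly identified as the crux --- is simply invisible. Lemma~\ref{lem-m} exhibits a neighbourhood of the trivial section biholomorphic to a neighbourhood of the zero section of $\mathcal{O}_{\mathbb{C}P^1}(1)^{\oplus 2g}$, and the latter is Moishezon by holography applied inside the projective compactification $P\big(\mathcal{O}_{\mathbb{C}P^1}(1)^{\oplus 2g}\oplus\mathcal{O}_{\mathbb{C}P^1}\big)$. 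Note that your lattice-quotient model yields this local statement for free: the quotient map restricts to a biholomorphism from a small tubular neighbourhood of any section onto a neighbourhood of the corresponding twistor line. So the fastest repair of your argument is to delete the theta construction entirely and invoke holography on your own model --- which is, in essence, the paper's proof.
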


\section{Automorphisms of the rank one Hodge moduli space}\label{auto-hodge}

\subsection{Connected component of the automorphism group}

Let $X$ be an irreducible smooth complex projective curve of genus $g$, with $g \geq 1$. The holomorphic cotangent bundle of $X$ will be denoted by $K_X$.
\begin{Definition} For any $\lambda \in \mathbb C$, a rank one $\lambda$-connection on $X$ is a pair of the form $(L, D)$, where $L$ is a holomorphic line bundle on~$X$ and
\begin{gather*}
D \colon \ L \longrightarrow L\otimes K_X
\end{gather*}
is a holomorphic dif\/ferential operator of order no more than one such that
\begin{gather*}
D(f_0s) = f_0D(s)+\lambda\cdot s\otimes (df_0)
\end{gather*}
for every locally def\/ined holomorphic section $s$ of $L$ and every locally def\/ined holomorphic function $f_0$ on $X$.
\end{Definition}

So, if $(L, D)$ is a $\lambda$-connection with $\lambda \not= 0$, then $D/\lambda$ is a holomorphic connection on $L$; a~$0$-connection $D$ on $L$ is an element of $H^0(X, K_X)$ because such a $D$ is ${\mathcal O}_X$-linear. Note that for a~$\lambda$-connection $(L, D)$, with $\lambda \not= 0$ we have $\text{degree}(L) = 0$ because $L$ admits a holomorphic connection. Also, note that any holomorphic connection on a Riemann surface is automatically f\/lat.

If $(L, D)$ is a $0$-connection we will always impose the condition that $\text{degree}(L) = 0$.

Let
\begin{gather*}
{\mathcal M}_{\rm Hod} = {\mathcal M}^X_{\rm Hod}(1)
\end{gather*}
be the Hodge moduli space of rank one $\lambda$-connections. So, the points of ${\mathcal M}_{\rm Hod}$ parametrize triples of the form $(\lambda, L, D)$, where $\lambda \in \mathbb C$, $L$ is a holomorphic line bundle on $X$ of degree zero and~$D$ is a $\lambda$-connection on~$L$.

Def\/ine an algebraic morphism
\begin{gather}\label{lf}
f \colon \ {\mathcal M}_{\rm Hod} \longrightarrow {\mathbb C} , \qquad (\lambda, L, D) \longmapsto \lambda .
\end{gather}
For any $\lambda \in \mathbb C$, let
\begin{gather*}
f^\lambda := f^{-1}(\lambda) \subset {\mathcal M}_{\rm Hod}
\end{gather*}
be the f\/iber over $\lambda$. By def\/inition, the f\/iber $f^0$ is the moduli space of Higgs line bundles of degree zero $\operatorname{Pic}^0(X)\times H^0(X, K_X)$ on $X$, and the f\/iber $f^1$ is the moduli space of rank one holomorphic connections on $X$. The latter space is also called the de Rham moduli space. For $\lambda \not= 0$, the f\/iber $f^\lambda$ is canonically identif\/ied with $f^1$ by the map $(L, D) \longmapsto (L, D/\lambda)$. We note that $f^1$ is biholomorphic to the Betti moduli space $\operatorname{Hom}(\pi_1(X), {\mathbb C}^*) = ({\mathbb C}^*)^{2g}$ by sending a holomorphic connection to its monodromy representation, and hence $f^1$ does not admit a~compact submanifold of positive dimension. Since $\operatorname{Pic}^0(X)\times H^0(X, K_X)$ has the projective variety $\operatorname{Pic}^0(X)$ of positive dimension as a~subvariety, it follows that~$f^0$ is not biholomorphic to~$f^1$.

The group of all algebraic automorphisms of the quasiprojective variety
${\mathcal M}_{\rm Hod}$ will be denoted by $\operatorname{Aut}({\mathcal M}_{\rm Hod})$. Let
\begin{gather*}
\operatorname{Aut}({\mathcal M}_{\rm Hod})_0 \subset \operatorname{Aut}({\mathcal M}_{\rm Hod})
\end{gather*}
be the connected component containing the identity automorphism and
 let
\begin{gather*}
{\mathbb V} := \operatorname{Morphisms}\big({\mathbb C}, H^0(X, K_X)\big)
\end{gather*}
be the inf\/inite dimensional complex vector space parametrizing all algebraic maps from $\mathbb C$ to the af\/f\/ine space $H^0(X, K_X)$. Then we have the following theorem.
\begin{Theorem}\label{thm1}
The group $\operatorname{Aut}({\mathcal M}_{\rm Hod})_0$ fits in a short exact sequence of groups
\begin{gather*}
0 \longrightarrow {\mathbb V} \stackrel{\iota}{\longrightarrow} \operatorname{Aut}({\mathcal M}_{\rm Hod})_0 \stackrel{h}{\longrightarrow}
\operatorname{Pic}^0(X)\times {\mathbb C}^* \longrightarrow 0 .
\end{gather*}
\end{Theorem}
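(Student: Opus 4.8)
The plan is to realize all three pieces of the sequence geometrically and then prove that they exhaust $\operatorname{Aut}(\mathcal{M}_{\rm Hod})_0$. First I would write down the relevant automorphisms. For an algebraic map $\phi\colon\mathbb{C}\to H^0(X,K_X)$ set $\iota(\phi)\colon(\lambda,L,D)\mapsto(\lambda,L,D+\phi(\lambda))$; since adding an $\mathcal{O}_X$-linear operator to $D$ preserves the $\lambda$-Leibniz rule this is an automorphism, and $\phi\mapsto\iota(\phi)$ is an injective homomorphism from the additive group $\mathbb{V}$. Next, the scaling $\sigma_t\colon(\lambda,L,D)\mapsto(t\lambda,L,tD)$ for $t\in\mathbb{C}^*$ gives a $\mathbb{C}^*$ of automorphisms, and tensoring by a flat bundle gives a $\operatorname{Pic}^0(X)$ of automorphisms: for $\xi\in\operatorname{Pic}^0(X)$ choose a holomorphic connection $\nabla$ on $\xi$ and send $(\lambda,L,D)$ to $(\lambda,L\otimes\xi,D\otimes 1+1\otimes\lambda\nabla)$; changing $\nabla$ by $\eta\in H^0(X,K_X)$ alters this by $\iota(\lambda\mapsto\lambda\eta)$, so the construction is canonical modulo $\iota(\mathbb{V})$.

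To define $h$ intrinsically I would attach to $\Phi\in\operatorname{Aut}(\mathcal{M}_{\rm Hod})_0$ two invariants. The first is a scaling factor on the base: I claim $f\circ\Phi=t(\Phi)\cdot f$ with $t(\Phi)\in\mathbb{C}^*$. The second is the translation $a(\Phi)\in\operatorname{Pic}^0(X)$ induced on $\operatorname{Alb}(\mathcal{M}_{\rm Hod})$. Setting $h(\Phi)=(a(\Phi),t(\Phi))$ yields a homomorphism to $\operatorname{Pic}^0(X)\times\mathbb{C}^*$, surjective because $\sigma_t$ realizes the second factor and tensoring the first, with $\iota(\mathbb{V})\subseteq\ker h$ by construction. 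The whole content is thus the scaling claim together with the exactness $\ker h=\iota(\mathbb{V})$.

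For the scaling claim I would first show $f^0$ is intrinsic: any positive-dimensional complete subvariety lies in a fiber of $f$ (a complete variety maps to a point in $\mathbb{A}^1$), and among the fibers only $f^0=\operatorname{Pic}^0(X)\times H^0(X,K_X)$ contains such subvarieties, since $f^\lambda\cong(\mathbb{C}^*)^{2g}$ has none for $\lambda\neq0$; as $f^0$ is swept out by the projective translates $\operatorname{Pic}^0(X)\times\{\psi\}$, it is exactly their union, whence $\Phi(f^0)=f^0$. I would then compute the ring of global regular functions $H^0(\mathcal{M}_{\rm Hod},\mathcal{O})=f^*\mathbb{C}[\lambda]$ (a global function restricts to a polynomial along the affine Higgs directions and is constant on the projective $\operatorname{Pic}^0(X)$, and the torsor structure on the fibers $f^\lambda$, $\lambda\neq0$, forces these polynomials to be constant), this computation being the key technical input. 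Then $\Phi$ descends to a polynomial automorphism of $\mathbb{A}^1=\operatorname{Spec}\mathbb{C}[\lambda]$, necessarily $\lambda\mapsto t\lambda$ because it fixes $f^0$. Composing with $\sigma_{t^{-1}}$ I may assume $f\circ\Phi=f$, so $\Phi$ preserves every fiber.

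Finally, exactness. Using Albanese functoriality and $\operatorname{Alb}(\mathcal{M}_{\rm Hod})=\operatorname{Pic}^0(X)$, realized by $p\colon(\lambda,L,D)\mapsto L$ (all affine directions map trivially), the induced automorphism of $\operatorname{Pic}^0(X)$ lies in the identity component, hence is a translation $\tau_\xi$, giving $p\circ\Phi=\tau_\xi\circ p$; composing with tensoring by $-\xi$ I may assume $p\circ\Phi=p$. Now $\Phi$ covers the identity of $\mathbb{C}\times\operatorname{Pic}^0(X)$ under $q=(f,p)$, whose fibers are the $H^0(X,K_X)$-torsors of $\lambda$-connections, and it remains to show $\Phi$ acts on them by a translation $\iota(\phi)$. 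Here completeness of $\operatorname{Pic}^0(X)$ is used twice: any $L$-dependence of the fiberwise map is a morphism $\operatorname{Pic}^0(X)\to H^0(X,K_X)$, hence constant, so the translation depends on $\lambda$ alone and lies in $\mathbb{V}$; and the same completeness, together with the absence of a canonical origin on the torsors $f^\lambda$ for $\lambda\neq0$, must force the fiberwise linear part to be trivial, so that no $\operatorname{GL}(H^0(X,K_X))$ appears. Establishing this fiberwise rigidity---ruling out nonlinear and linear fiberwise behaviour uniformly in $\lambda$, including the compatibility across the Higgs fiber $\lambda=0$ and with $\sigma_t$---is the main obstacle; granting it, $\Phi=\iota(\phi)$ and the sequence is exact.
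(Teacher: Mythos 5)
Your skeleton matches the paper's proof: the same generators (the translations $\iota_v$, the $\mathbb{C}^*$-scaling $\sigma_t$, tensoring by a flat line bundle $(L_0,D_0)$), the same homomorphism $h$ recording the base scaling together with the induced translation of $\operatorname{Pic}^0(X)$, and the same surjectivity argument. The problem is that the two places you yourself flag as ``the key technical input'' and ``the main obstacle'' are exactly where the content of the theorem sits, and you supply no argument there. The missing lemma is: $f^1\cong\mathcal{M}_{dR}(X)$ --- hence every fiber $f^\lambda$, $\lambda\neq 0$ --- admits no nonconstant algebraic function (Proposition~2.2 of \cite{BBS}, which the paper quotes; it is not formal, resting on the fact that the de Rham space is a nontrivially twisted affine bundle over $\operatorname{Pic}^0(X)$). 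Granting that lemma, the kernel computation becomes immediate and needs none of your fiberwise-affine analysis: if $T\in\ker h$, then $T$ preserves each $f^\lambda$, $\lambda\neq0$, and each fiber of $\phi^\lambda\colon(L,D)\mapsto L$, so $\delta_\lambda(z)=T(z)-z$ is a well-defined morphism $f^\lambda\to H^0(X,K_X)$; by the lemma it is \emph{constant}, i.e., $T\vert_{f^\lambda}$ is literally a translation, and any putative linear part is killed in the same stroke as all other behaviour. The dependence $\lambda\mapsto\delta_\lambda$ is then algebraic on $\mathbb{C}^*$, extends holomorphically across $0$, and $T=\iota_v$ on the dense open set $f^{-1}(\mathbb{C}^*)$, hence everywhere. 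This is precisely how the paper closes the argument.

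By contrast, the two heuristics you offer in place of this lemma cannot be repaired as stated. ``The torsor structure on the fibers forces these polynomials to be constant'' is false for torsors in general: $f^0=\operatorname{Pic}^0(X)\times H^0(X,K_X)$ is itself a (trivial) torsor over $\operatorname{Pic}^0(X)$ and carries many nonconstant functions; what is needed is the nontriviality of the de Rham torsor, which is again the content of \cite{BBS}. Likewise ``absence of a canonical origin forces the fiberwise linear part to be trivial'' is not an argument: the scaling $\sigma_t$ restricted to $f^0$ covers the identity of $\operatorname{Pic}^0(X)$ and has linear part $t\cdot\operatorname{id}$ on the $H^0(X,K_X)$-fibers, so torsor-theoretic generalities alone can never rule out linear parts; a correct proof must use a property that distinguishes $f^\lambda$, $\lambda\neq0$, from $f^0$, and that property is exactly the absence of nonconstant functions (equivalently, the nondegeneracy of the extension class of $\mathcal{M}_{dR}(X)\to\operatorname{Pic}^0(X)$ in $H^1(X,\mathcal{O}_X)\otimes H^0(X,K_X)$). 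Your reduction steps (preservation of $f^0$, descent of $T$ to $\lambda\mapsto t\lambda$ on the base, quotienting by the known generators) are sound and parallel the paper's, but the proposal is incomplete until the no-nonconstant-functions lemma is proved and then applied through the difference map $\delta_\lambda$ as above.
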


\begin{proof} Let
\begin{gather*}
T \colon \ {\mathcal M}_{\rm Hod} \longrightarrow {\mathcal M}_{\rm Hod}
\end{gather*}
be any algebraic automorphism that lies in $\operatorname{Aut}({\mathcal M}_{\rm Hod})_0$. It is known that the f\/iber $f^1$ does not admit any nonconstant algebraic function \cite[Proposition~2.2]{BBS}. Since $f^\lambda$ is isomorphic to $f^1$ for every $\lambda \in {\mathbb C}^*$, it follows that $f^\lambda$ does not admit any nonconstant algebraic function if $\lambda \in {\mathbb C}^*$. Hence the composition
\begin{gather}\label{e1}
f^\lambda \stackrel{T\vert_{f^\lambda}}{\longrightarrow} {\mathcal M}_{\rm Hod} \stackrel{f}{\longrightarrow} \mathbb C
\end{gather}
is a constant function for all $\lambda \in {\mathbb C}^*$. Note that the point $f\circ T(f^{\lambda})$ lies in ${\mathbb C}^*$ because $f^0$ is not isomorphic to $f^\lambda$. Since this also holds for~$T^{-1}$, it follows that the composition in~\eqref{e1} is a~constant function for each $\lambda \in {\mathbb C}$. This implies that there is an algebraic automorphism
\begin{gather*}
\tau_0 \colon \ {\mathbb C} \longrightarrow \mathbb C
\end{gather*}
such that
\begin{gather*}
f\circ T = \tau_0 \circ f .
\end{gather*}
Since $f^0$ is not isomorphic to any $f^\lambda$, $\lambda \in {\mathbb C}^*$, it follows that $\tau_0(0) = 0$. Therefore, $\tau_0$ coincides with the multiplication of $\mathbb C$ by a f\/ixed nonzero number; let
\begin{gather}\label{e4}
\tau \in {\mathbb C}^*
\end{gather}
be such that $\tau_0(z) = \tau \cdot z$ for all $z \in \mathbb C$.

The group of all automorphisms of the variety $\operatorname{Pic}^0(X)$ will be denoted by $\operatorname{Aut}(\operatorname{Pic}^0(X))$ (since the variety $\operatorname{Pic}^0(X)$ is projective, any holomorphic automorphism of it is algebraic). The connected component of~$\operatorname{Aut}(\operatorname{Pic}^0(X))$ containing the identity automorphism will be denoted by~$\operatorname{Aut}(\operatorname{Pic}^0(X))_0$. The automorphisms of $\operatorname{Pic}^0(X)$ given by translations of the group $\operatorname{Pic}^0(X)$ lie in $\operatorname{Aut}(\operatorname{Pic}^0(X))_0$. In fact, this way $\operatorname{Pic}^0(X)$ gets identif\/ied with
$\operatorname{Aut}(\operatorname{Pic}^0(X))_0$; note that the Lie algebra $H^0(\operatorname{Pic}^0(X), T\operatorname{Pic}^0(X))$ of~$\operatorname{Aut}(\operatorname{Pic}^0(X))_0$ coincides with the Lie algebra of $\operatorname{Pic}^0(X)$ by evaluating sections of~$T\operatorname{Pic}^0(X)$ at the identity element.

For any $\lambda \in \mathbb C$, let $\phi^\lambda$ be the morphism def\/ined by
\begin{gather}\label{e7}
\phi^\lambda \colon \ f^\lambda \longrightarrow \operatorname{Pic}^0(X) ,\qquad (L, D) \longmapsto L .
\end{gather}
The f\/ibers of $\phi^\lambda$ are isomorphic to $H^0(X, K_X)$, because the space of all holomorphic connections on a line bundle $L \in \operatorname{Pic}^0(X)$ is an af\/f\/ine space for the vector space $H^0(X, K_X)$, and $H^0(X, K_X)$ is the space of all Higgs f\/ields on any line bundle. There is no nonconstant algebraic map from an af\/f\/ine space to an abelian variety. Hence, there is no nonconstant algebraic map from a f\/iber of $\phi^\lambda$ to $\operatorname{Pic}^0(X)$. Consequently, we get a map
\begin{gather*}
\Phi \colon \  {\mathbb C} \longrightarrow \operatorname{Aut}\big(\operatorname{Pic}^0(X)\big),
\end{gather*}
which is uniquely determined by the condition that the following diagram is commutative
\begin{gather*}
\begin{matrix}
f^\lambda & \stackrel{T\vert_{f^\lambda}}{\longrightarrow} & f^{\tau\lambda}\\
~ ~~ \Big\downarrow \phi^\lambda && ~ ~~~~ \Big\downarrow \phi^{\tau\lambda}\\
\operatorname{Pic}^0(X) & \stackrel{\Phi(\lambda)}{\longrightarrow} &\operatorname{Pic}^0(X)
\end{matrix}
\end{gather*}
for all $\lambda \in \mathbb C$, where $\tau$ is the complex number in \eqref{e4}$ $. Note that since $T \in \operatorname{Aut}({\mathcal M}_{\rm Hod})_0$, it follows that the image of $\Phi$ lies in $\operatorname{Aut}\big(\operatorname{Pic}^0(X)\big)_0 = \operatorname{Pic}^0(X) \subset \operatorname{Aut}\big(\operatorname{Pic}^0(X)\big)$. Thus, $\Phi$ is a~constant map, again because there is no nonconstant algebraic map from $\mathbb C$ to $\operatorname{Pic}^0(X)$. Let{\samepage
\begin{gather}\label{e5}
\widehat{\Phi} = \Phi({\mathbb C}) \in \operatorname{Pic}^0(X)
\end{gather}
be the image of the map $\Phi$.}

Let
\begin{gather*}
h \colon \ \operatorname{Aut}({\mathcal M}_{\rm Hod})_0 \longrightarrow \operatorname{Pic}^0(X)\times {\mathbb C}^*
\end{gather*}
be the map def\/ined by $T \longmapsto \big(\widehat{\Phi}, \tau\big)$. It is straight-forward to check that $h$ is a group homomorphism.

We will now show that $h$ is surjective. For this, f\/irst note that the multiplicative action of~${\mathbb C}^*$ on $\mathbb C$ has a natural lift to an action of ${\mathbb C}^*$ on ${\mathcal M}_{\rm Hod}$. Indeed, any $c \in {\mathbb C}^*$ acts on ${\mathcal M}_{\rm Hod}$ as
\begin{gather*}
(\lambda, L, D) \longmapsto (c\cdot\lambda, L, c\cdot D) .
\end{gather*}
For all these automorphisms of ${\mathcal M}_{\rm Hod}$ given by the action of ${\mathbb C}^*$, the corresponding elements of~$\operatorname{Pic}^0(X)$ (see \eqref{e5}) coincide with the identity element. Therefore, to prove that~$h$ is surjective, it suf\/f\/ices to show that the composition of $h$ with the projection $\operatorname{Pic}^0(X)\times {\mathbb C}^*  \longrightarrow \operatorname{Pic}^0(X)$ is surjective. Take any $L_0 \in \operatorname{Pic}^0(X)$ and f\/ix a holomorphic connection~$D_0$ on~$L_0$. Def\/ine
\begin{gather*}
\beta \colon \  {\mathcal M}_{\rm Hod} \longrightarrow {\mathcal M}_{\rm Hod} ,\qquad
(\lambda, L, D) \longmapsto (\lambda, L\otimes L_0, (D\otimes \text{Id}_{L_0}) +(\lambda\cdot \text{Id}_L\otimes D_0)) .
\end{gather*}
It is straight-forward to check that $\beta \in \operatorname{Aut}({\mathcal M}_{\rm Hod})$; its inverse is the corresponding map for $(L_0^*, D_0^*)$. Since the moduli space of rank one connections on $X$ is connected, it follows that $\beta \in \operatorname{Aut}({\mathcal M}_{\rm Hod})_0$; note that $\beta$ for the trivial connection $({\mathcal O}_X, d)$ is the identity map of ${\mathcal M}_{\rm Hod}$. The element in $\operatorname{Pic}^0(X)$ corresponding to $\beta$ (see~\eqref{e5}) is clearly $L_0$. Therefore, the composition of $h$ with the projection $\operatorname{Pic}^0(X)\times {\mathbb C}^*  \longrightarrow \operatorname{Pic}^0(X)$ and hence $h$ itself are surjective.

Next, we need to def\/ine ${\mathbb V} \stackrel{\iota}{\longrightarrow} \operatorname{Aut}({\mathcal M}_{\rm Hod})_0$. To do so, consider for $v \in \mathbb V$ the automorphism
\begin{gather}\label{e6}
\iota_v \colon \ {\mathcal M}_{\rm Hod} \longrightarrow {\mathcal M}_{\rm Hod} ,\qquad (\lambda, L, D) \longmapsto (\lambda, L, D+v(\lambda)) .
\end{gather}
Clearly, we have $\iota_v \in \operatorname{Aut}({\mathcal M}_{\rm Hod})_0$, and also $\iota_v \in \operatorname{kernel}(h)$. Therefore, there is an injective homomorphism
\begin{gather*}
\iota \colon \ {\mathbb V} \longrightarrow \operatorname{kernel}(h) \subset \operatorname{Aut}({\mathcal M}_{\rm Hod})_0 ,\qquad v \longmapsto \iota_v .
\end{gather*}
We will prove that $\operatorname{image}(\iota) = \operatorname{kernel}(h)$.

In order to prove that $\operatorname{image}(\iota) = \operatorname{kernel}(h)$, take any $T \in \operatorname{Aut}({\mathcal M}_{\rm Hod})_0$ such that
\begin{gather*}T \in \operatorname{kernel}(h) .\end{gather*} We have $T(f^\lambda) = f^\lambda$ for all
$\lambda \in {\mathbb C}^*$ because the constant in \eqref{e4} for $T$ is~$1$. Since the element in \eqref{e5} corresponding to $T$ is the trivial line bundle, we get a morphism
\begin{gather*}
\delta_\lambda \colon \  f^\lambda \longrightarrow H^0(X, K_X) ,\qquad z \longmapsto T(z) - z ;
\end{gather*}
note that since the f\/ibers of $\phi^\lambda$ (constructed in \eqref{e7}) are af\/f\/ine spaces for $H^0(X, K_X)$, and $\phi^\lambda(T(z)) = \phi^\lambda(z)$ for all $z \in f^\lambda$, we have $T(z) - z \in H^0(X, K_X)$. As there are no nonconstant algebraic functions on $f^\lambda$ \cite[Proposition~2.2]{BBS}, it follows that the above function $\delta_\lambda$ is a constant one.

All automorphism of the af\/f\/ine space $H^0(X, K_X)$ are of the form $u \longmapsto A(u)+u_0$, where $A \in \text{GL}(H^0(X, K_X))$ and $u_0 \in H^0(X, K_X)$. Considering the restrictions of $T$ to open subsets of the form $h^{-1}(V\times U)$, where $U$ is an analytic neighborhood of $0 \in \mathbb C$ and $V$ is an analytic open subset of $\operatorname{Pic}^0(X)$, it now follows that the above map
\begin{gather*}\lambda \longmapsto \operatorname{image}(\delta_\lambda) \in H^0(X, K_X)\end{gather*}
extends across $0 \in \mathbb C$ as a holomorphic map from $\mathbb C$ to $H^0(X, K_X)$. In other words, there is a~unique element $v \in {\mathbb V}$ such that $v(\lambda) = \operatorname{image} (\delta_\lambda)$ for all $\lambda \in {\mathbb C}^*$. Clearly, we have $\iota_v = T$, where~$\iota_v$ is constructed in~\eqref{e6}. This implies that $\operatorname{image}(\iota) = \operatorname{kernel}(h)$. This completes the proof of the theorem.
\end{proof}

\subsection{Automorphisms preserving a cohomology class}\label{Autocoh}

In this subsection we assume that $g = \operatorname{genus}(X) > 1$.

Since any holomorphic line bundle of degree zero on the compact Riemann surface $X$ admits a unique f\/lat connection with unitary monodromy, the Picard group $\operatorname{Pic}^0(X)$ is identif\/ied with
\begin{gather*}\operatorname{Hom}(\pi_1(X), \text{U}(1)) = \operatorname{Hom}(H_1(X, {\mathbb Z}), \text{U}(1)) .\end{gather*}
There is a natural symplectic form
\begin{gather}\label{tt}
\widetilde\theta
\end{gather}
on the character variety $\operatorname{Hom}(\pi_1(X), \text{U}(1))$ \cite{AB,Go}. The cohomology class def\/ined by $\widetilde\theta$ is integral. Let
\begin{gather*}
\theta_0 \in H^2\big(\operatorname{Pic}^0(X), {\mathbb Z}\big)
\end{gather*}
be the cohomology class given by $\widetilde\theta$ in \eqref{tt}. This $\theta_0$ is a principal polarization on $\operatorname{Pic}^0(X)$. More precisely, it is the class of a theta divisor on $\operatorname{Pic}^0(X)$. Consider the projection
\begin{gather}\label{phi}
\phi \colon \ {\mathcal M}_{\rm Hod} \longrightarrow \operatorname{Pic}^0(X) ,\qquad (\lambda, L, D) \longmapsto L ;
\end{gather}
so, the restriction of $\phi$ to $f^\lambda$ is the map $\phi^\lambda$ in~\eqref{e7}. Def\/ine the cohomology class
\begin{gather}\label{the}
\theta := \phi^*\theta_0 = [\phi^*\widetilde\theta] \in H^2({\mathcal M}_{\rm Hod}, {\mathbb Z}) .
\end{gather}

The moduli space $f^1$ of holomorphic rank one connections
(see \eqref{e1}) has a natural holomorphic symplectic form $\theta_h$.
The restriction of $\theta_h$ to the moduli space of holomorphic connections
$\operatorname{Hom}(\pi_1(X), \text{U}(1))$ with unitary monodromy coincides with the above
symplectic form $\widetilde\theta$. On the other hand $f^1$ has a deformation
retraction onto $\operatorname{Hom}(\pi_1(X), \text{U}(1))$; for example such
a deformation retraction is given by a deformation retraction of
${\mathbb C}^*$ to $\text{U}(1)$. Therefore, we conclude that the cohomology class of
$\theta_h$ coincides with the restriction of $\theta$ to $f^1$.

Let
\begin{gather}\label{autc}
\operatorname{Aut}^\theta({\mathcal M}_{\rm Hod}) \subset \operatorname{Aut}({\mathcal M}_{\rm Hod})
\end{gather}
be the subgroup consisting of all algebraic automorphisms of ${\mathcal M}_{\rm Hod}$
that f\/ixes the cohomology class $\theta$ in \eqref{the}.

Let $\operatorname{Aut}(X)$ denote the group of all holomorphic automorphisms of $X$. We have a
homomorphism
\begin{gather*}
\mu \colon \ \operatorname{Aut}(X)\oplus ({\mathbb Z}/2{\mathbb Z}) \longrightarrow
\operatorname{Aut}\big(\operatorname{Pic}^0(X)\big) , \qquad \mu(t,0)(L) = t^*L , \qquad \mu(t,1)(L) = t^*L^* .
\end{gather*}
We note that the restriction $\mu\vert_{\operatorname{Aut}(X)}$ is injective (recall that $g \geq 2$), and~$\mu$ is injective if and only if $X$ is non-hyperelliptic. If~$X$ is hyperelliptic, then the hyperelliptic involution acts on~$\operatorname{Pic}^0(X)$ as~$L \longmapsto L^*$. Def\/ine
\begin{gather}\label{GX}
\Gamma_X = \operatorname{image}(\mu) .
\end{gather}
So, $\Gamma_X = \operatorname{Aut}(X)$ if $X$ is hyperelliptic, and $\Gamma_X = \operatorname{Aut}(X)\oplus ({\mathbb Z}/2{\mathbb Z})$ if $X$ is non-hyperelliptic.

We also have a homomorphism
\begin{gather}\label{tmu}
\widetilde{\mu} \colon \  \operatorname{Aut}(X)\oplus ({\mathbb Z}/2{\mathbb Z}) \longrightarrow \operatorname{Aut}({\mathcal M}_{\rm Hod})
\end{gather}
def\/ined by $\widetilde{\mu}(t,0)(\lambda, L, D) = (\lambda, t^*L, t^*D)$ and $\widetilde{\mu}(t,1)(\lambda, L, D) = (\lambda, t^*L^*, t^*D^*)$. Note that if $\lambda = 0$, then $D^* = -D \in H^0(X, K_X)$. The homomorphism $\widetilde{\mu}$ in~\eqref{tmu} factors through the quotient~$\Gamma_X$ of $\operatorname{Aut}(X)\oplus ({\mathbb Z}/2{\mathbb Z})$ in \eqref{GX}.

Consider the action of $\Gamma_X$ on $\operatorname{Aut}({\mathcal M}_{\rm Hod})$ given by the composition of $\widetilde{\mu}$ with the adjoint action of~$\operatorname{Aut}({\mathcal M}_{\rm Hod})$ on itself. This action preserves the connected component $\operatorname{Aut}({\mathcal M}_{\rm Hod})_0$. Def\/ine the semi-direct product
\begin{gather}\label{tmu2}
\widetilde{\Gamma}_X := \operatorname{Aut}({\mathcal M}_{\rm Hod})_0\rtimes \Gamma_X
\end{gather}
for the above action of $\Gamma_X$ on $\operatorname{Aut}({\mathcal M}_{\rm Hod})_0$.

Consider the action of $\Gamma_X$ on ${\mathcal M}_{\rm Hod}$ given by $\widetilde{\mu}$ in \eqref{tmu}. This action clearly f\/ixes the cohomology class $\theta$ in \eqref{the}. The action of $\operatorname{Aut}({\mathcal M}_{\rm Hod})_0$ on ${\mathcal M}_{\rm Hod}$ f\/ixes $\theta$ because $\operatorname{Aut}({\mathcal M}_{\rm Hod})_0$ is connected. Also, $\operatorname{Aut}({\mathcal M}_{\rm Hod})_0$ is a normal subgroup of~$\operatorname{Aut}({\mathcal M}_{\rm Hod})$. Therefore, we get a~homomorphism
\begin{gather}\label{rho}
\rho \colon \ \widetilde{\Gamma}_X \longrightarrow \operatorname{Aut}^\theta({\mathcal M}_{\rm Hod}),
\end{gather}
where $\operatorname{Aut}^\theta({\mathcal M}_{\rm Hod})$ and $\widetilde{\Gamma}_X$ are constructed in~\eqref{autc} and~\eqref{tmu2} respectively.

\begin{Proposition}\label{propo1}
The homomorphism $\rho$ in \eqref{rho} is an isomorphism.
\end{Proposition}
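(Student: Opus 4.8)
The plan is to show $\rho$ is both injective and surjective; injectivity is immediate from the induced action on $\operatorname{Pic}^0(X)$, while surjectivity is where the hypothesis $S^*\theta=\theta$ and a Torelli-type input are essential. For injectivity, I would test an element on the projection $\phi$ of \eqref{phi}. A member of $\widetilde\Gamma_X$ is a pair $(T,\gamma)$ with $T\in\operatorname{Aut}({\mathcal M}_{\rm Hod})_0$ and $\gamma\in\Gamma_X$, and $\rho(T,\gamma)=T\circ\widetilde\mu(\gamma)$. If this is the identity, then $\widetilde\mu(\gamma)=T^{-1}\in\operatorname{Aut}({\mathcal M}_{\rm Hod})_0$. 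By the construction of $h$ in the proof of Theorem~\ref{thm1}, the element of $\operatorname{Aut}(\operatorname{Pic}^0(X))$ attached to any member of $\operatorname{Aut}({\mathcal M}_{\rm Hod})_0$ lies in $\operatorname{Aut}(\operatorname{Pic}^0(X))_0=\operatorname{Pic}^0(X)$, i.e., is a translation; whereas $\widetilde\mu(\gamma)$ induces $\gamma$, which fixes the identity element of $\operatorname{Pic}^0(X)$. A translation fixing the origin is trivial, so $\gamma=\mathrm{id}$ and hence $T=\mathrm{id}$.

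For surjectivity, fix $S\in\operatorname{Aut}^\theta({\mathcal M}_{\rm Hod})$. I would first rerun the opening arguments in the proof of Theorem~\ref{thm1}; these use only that $f^0$ is not isomorphic to $f^\lambda$ for $\lambda\neq 0$ and that the $f^\lambda$ carry no nonconstant algebraic functions, and never invoke connectedness of $S$. This yields a scalar $\tau\in{\mathbb C}^*$ with $f\circ S=\tau\cdot f$, together with a constant map $\Phi\equiv\Psi\in\operatorname{Aut}(\operatorname{Pic}^0(X))$ satisfying $\phi\circ S=\Psi\circ\phi$; constancy of $\Phi$ again follows from the absence of nonconstant algebraic maps from ${\mathbb C}$ to the abelian variety $\operatorname{Pic}^0(X)$, applied componentwise to $\operatorname{Aut}(\operatorname{Pic}^0(X))$.

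The key step, and the main obstacle, is to read off $\gamma\in\Gamma_X$ from $S^*\theta=\theta$. The section $s\colon\operatorname{Pic}^0(X)\to{\mathcal M}_{\rm Hod}$, $L\mapsto(0,L,0)$, of $\phi$ shows that $\phi^*$ is injective on $H^2$. From $\phi\circ S=\Psi\circ\phi$ and \eqref{the} we obtain $\phi^*\Psi^*\theta_0=S^*\phi^*\theta_0=S^*\theta=\theta=\phi^*\theta_0$, hence $\Psi^*\theta_0=\theta_0$, so $\Psi$ is an automorphism of the principally polarized abelian variety $(\operatorname{Pic}^0(X),\theta_0)$. Writing $\Psi$ as a translation followed by an automorphism fixing the origin (translations act trivially on $H^2$, so they automatically preserve $\theta_0$), the strong Torelli theorem identifies the group of origin-fixing automorphisms preserving $\theta_0$ with exactly $\mu(\Gamma_X)$: it is $\operatorname{Aut}(X)$ when $X$ is hyperelliptic, with the hyperelliptic involution realizing $-1$, and $\operatorname{Aut}(X)\times\{\pm 1\}$ otherwise, matching \eqref{GX}. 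Thus there is $\gamma\in\Gamma_X$ whose origin-fixing part coincides with that of $\Psi$.

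Finally I would reduce $S$ to the connected component. After replacing $S$ by $\widetilde\mu(\gamma)^{-1}\circ S$, the induced automorphism of $\operatorname{Pic}^0(X)$ is a pure translation; composing further with the inverse of the automorphism $\beta$ from the proof of Theorem~\ref{thm1}, which realizes this translation and lies in $\operatorname{Aut}({\mathcal M}_{\rm Hod})_0$, and with the inverse of the ${\mathbb C}^*$-action realizing $\tau$, I obtain an automorphism $S''$ preserving each fiber $f^\lambda$ and each fiber of $\phi^\lambda$ in \eqref{e7}. The closing argument of the proof of Theorem~\ref{thm1} then applies verbatim: on the affine fibers of $\phi^\lambda$ the difference $z\mapsto S''(z)-z$ is a constant of $H^0(X,K_X)$, since $f^\lambda$ has no nonconstant algebraic functions, so $S''=\iota_v$ for a unique $v\in{\mathbb V}$ and therefore $S''\in\operatorname{Aut}({\mathcal M}_{\rm Hod})_0$. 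Unwinding the compositions exhibits $S$ as a product of an element of $\operatorname{Aut}({\mathcal M}_{\rm Hod})_0$ and $\widetilde\mu(\gamma)$, so $S\in\operatorname{image}(\rho)$, completing the proof.
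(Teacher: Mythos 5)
Your proposal is correct and follows essentially the same route as the paper: injectivity comes from the faithfulness of $\Gamma_X$ on $\operatorname{Pic}^0(X)$, and surjectivity rests on Weil's Hauptsatz (automorphisms of the principally polarized $\big(\operatorname{Pic}^0(X),\theta_0\big)$ are generated by $\Gamma_X$ and translations) combined with the structure theory of $\operatorname{Aut}({\mathcal M}_{\rm Hod})_0$ from Theorem~\ref{thm1}. The only deviations are in execution and, if anything, tighten the argument: your injectivity step uses the translation-versus-origin-fixing dichotomy where the paper uses the induced action on $H^1({\mathcal M}_{\rm Hod},{\mathbb C})$, and your final reduction---extracting $\Psi^*\theta_0=\theta_0$ via injectivity of $\phi^*$ on $H^2$ and then identifying the residual automorphism as some $\iota_v$ through the kernel characterization in Theorem~\ref{thm1}---makes explicit what the paper compresses into the assertion that $T_1\circ T_0\circ T$ is ``homotopic to the identity.''
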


\begin{proof}
We will f\/irst prove that $\rho$ is injective. For this note that $\phi$ in~\eqref{phi} induces an isomorphism of f\/irst cohomologies with coef\/f\/icients in $\mathbb C$. Indeed, the f\/ibers of $\phi$ are dif\/feomorphic to ${\mathbb C}\times H^0(X, K_X)$, hence $\phi$ induces an isomorphism $H^1\big(\operatorname{Pic}^0(X), {\mathbb C}\big) \stackrel{\phi^*}{\longrightarrow} H^1({\mathcal M}_{\rm Hod}, {\mathbb C})$. Take any
\begin{gather*}\gamma \in \operatorname{kernel}(\rho) \subset \widetilde{\Gamma}_X .\end{gather*}
Let $\gamma' \in \Gamma_X$ be the image of $\gamma$ by the natural projection of $\widetilde{\Gamma}_X$ to $\Gamma_X$. Let $\gamma'' \in \widetilde{\Gamma}_X$ be the image of $\gamma'$ by the natural inclusion $\Gamma_X \hookrightarrow  \widetilde{\Gamma}_X$. So, $\gamma$ and $\gamma''$ dif\/fer by an element of $\operatorname{Aut}({\mathcal M}_{\rm Hod})_0$. Since $\gamma$ acts trivially on~$H^1({\mathcal M}_{\rm Hod}, {\mathbb C})$ (as it acts trivially on~${\mathcal M}_{\rm Hod}$), and $\operatorname{Aut}({\mathcal M}_{\rm Hod})_0$ acts trivially on $H^1({\mathcal M}_{\rm Hod}, {\mathbb C})$ as it is connected, we conclude that $\gamma''$ acts trivially on $H^1({\mathcal M}_{\rm Hod}, {\mathbb C})$. This implies that $\gamma'$ acts trivially on $H^1(\operatorname{Pic}^0(X), {\mathbb C}) = H^1({\mathcal M}_{\rm Hod}, {\mathbb C})$. But, as noted earlier, this implies that $\gamma' = 1$. Hence, $\gamma \in \operatorname{Aut}({\mathcal M}_{\rm Hod})_0$. Now we conclude that $\gamma = 1$ because $\rho(\gamma)$ is the trivial automorphism of ${\mathcal M}_{\rm Hod}$. Hence $\rho$ is injective.

Let $T\in\operatorname{Aut}^\theta({\mathcal M}_{\rm Hod})$. The image
\begin{gather*}T(\{0\}\times \operatorname{Pic}^0(X)\times\{0\})\subset\mathcal M_{\rm Hod}\end{gather*}
is of the form
\begin{gather*}\{0\}\times \operatorname{Pic}^0(X)\times\{\omega\}\end{gather*}
for some $\omega\in H^0(X;K_X)$, as can be deduced analogously to the proof of Theorem~\ref{thm1}. By applying a suitable automorphism $T_0\in\operatorname{Aut}({\mathcal M}_{\rm Hod})_0$ we get
\begin{gather*}T_0\circ T(\{0\}\times \operatorname{Pic}^0(X)\times\{0\})=\{0\}\times \operatorname{Pic}^0(X)\times\{0\}.\end{gather*}
The group of automorphisms of the abelian variety $\operatorname{Pic}^0(X)$ that f\/ix the principal polarization~$\theta_0$ is generated by $\Gamma_X$ and the translations of $\operatorname{Pic}^0(X)$ (see \cite[p.~35, Hauptsatz]{We}). Because of the discussion in the beginning of Section~\ref{Autocoh} it follows that we can apply an automorphism $T_1\in\Gamma_X$ such that
\begin{gather*}T_1\circ T_0\circ T\end{gather*} is the identity whence restricted to $\{0\}\times \operatorname{Pic}^0(X)\times\{0\}$.
Clearly, this implies f\/irst that $T_1\circ T_0\circ T$ restricted to $f^0$ is homotopic to the identity and f\/inally that $T_1\circ T_0\circ T$ is homotopic to the identity, which f\/inishes the proof.
\end{proof}

\section[Holomorphic automorphisms of the Deligne--Hitchin moduli space]{Holomorphic automorphisms of the Deligne--Hitchin\\ moduli space}

\subsection{Connected component of the holomorphic automorphism group}\label{aut0mdh}

The smooth locus of the moduli space of f\/lat connections on a compact Riemann surface is equipped with a natural hyper-K\"ahler structure; see~\cite{H} for the case of ${\rm SL}(2,\mathbb C)$-connections, and \cite{GX} for a detailed treatment of the case of f\/lat line bundles.

A hyper-K\"ahler manifold is a Riemannian manifold $(M, g)$ which is K\"ahler for three complex structures $I$, $J$ and $K$ satisfying the quaternionic relations
\begin{gather*}IJ = -JI = K .\end{gather*} The interplay between the Riemannian, symplectic and complex geometric aspects of hyper-K\"ahler manifolds makes them fascinating mathematical objects. A hyper-K\"ahler mani\-fold~$M$ admits a twistor space which contains all of the information about $M$ in a~complex geometric fashion: The compatible complex structures on $M$ are parametrized by $\mathbb CP^1 = S^2 = \big\{(x_1, x_2, x_3) \in {\mathbb R}^3 \,|\, x^2_1+x^2_2+x^2_3 = 1\big\}$ via
\begin{gather*}(x_1, x_2, x_3) \longmapsto I_{(x,_1,x_2,x_3)} = x_1 I+x_2 J+x_3 K ,\end{gather*}
where $I$, $J$, $K$ are the three original complex structures on~$M$. The twistor space $\mathcal Z$ is a~complex manifold with a holomorphic surjective submersion to $\mathbb CP^1$ such that the f\/iber over every \smash{$z \in {\mathbb C}P^1$} is identif\/ied with~$(M, I_z)$. The twistor lines are the ``constant'' sections of the above f\/ibration ${\mathcal Z} = M\times\mathbb CP^1 \longrightarrow {\mathbb C} P^1$. These sections are holomorphic and the normal bundle of a~twistor line is isomorphic to ${\mathcal O}_{\mathbb CP^1}(1)^{\oplus d} \longrightarrow \mathbb CP^1$, where $d = \frac{1}{2}\dim_{\mathbb R}M$. This normal bundle is equipped with additional structures which provide all of the information needed to reconstruct the hyper-K\"ahler structure on~$M$; see~\cite{HKLR} for details.

For a complex reductive group $G_{\mathbb C}$, the hyper-K\"ahler structure on the moduli space of f\/lat $G_{\mathbb C}$-connections on a compact Riemann surface $X$ is given by non-abelian Hodge theory. The complex structure $J$ is induced by the complex Lie group $G_{\mathbb C}$ through the identif\/ication of the moduli space with the Betti moduli space $\operatorname{Hom}(\pi_1(X), G_{\mathbb C})/\!\!/G_{\mathbb C}$. Via the solutions of Hitchin's self-duality equation, the moduli space of stable $G_{\mathbb C}$-Higgs bundles, i.e., the Dolbeault moduli space, gets identif\/ied with the moduli space of irreducible f\/lat connections, yielding the complex structure~$I$. The reverse map ${\mathcal M}_{dR}  \longrightarrow \mathcal M_{\rm Dol}$ arises from Donaldson's twisted harmonic maps construction \cite{Do} for the case of $G_{\mathbb C}={\rm SL }(2,\mathbb C)$, see~\cite{Co} for the general case. It was f\/irst shown by Hitchin in the case of ${\rm SL}(2,\mathbb C)$ that~$I$,~$J$ and $K := IJ$ give rise to a~hyper-K\"ahler structure for the natural Riemannian metric. In the abelian case of $G_{\mathbb C} = \mathbb C^*$ the theory simplif\/ies considerably boiling down to the classical abelian Hodge theory~\cite{GX}.

It was f\/irst noticed by Deligne that the twistor space of the moduli space of f\/lat connections on a Riemann surface $X$ has a convenient description by gluing the Hodge moduli spaces for~$X$ and~$\overline X$ via the Riemann--Hilbert isomorphism; see~\cite{Si95,Si08} for details. We recall below this construction for the rank one case (the case of our interests).

Take a point $(\lambda, L, D) \in \mathcal M_{\rm Hod}(X) = \mathcal M^X_{\rm Hod}(1)$ with $\lambda \neq 0$ and consider the f\/lat connec\-tion~$D/\lambda$ on $L$. Let
\begin{gather*}\rho \colon \ \pi_1(X)  \longrightarrow H^1(X, {\mathbb Z}) \longrightarrow \mathbb C^*\end{gather*}
be the monodromy representation for~$D/\lambda$. After identifying $H^1(X, {\mathbb Z})$ with $H^1(\overline{X}, {\mathbb Z})$ by the identity map of the underlying $C^\infty$ manifolds, the homomorphism~$\rho$ gives a~holomorphic rank one $1$-connection $(1, E, \widetilde D)$ on~$\overline{X}$. Consider the morphism~$f$ in~\eqref{lf}. Let
\begin{gather*}
f_{\overline X} \colon \ {\mathcal M}_{\rm Hod}(\overline{X}) := {\mathcal M}^{\overline X}_{\rm Hod}(1)  \longrightarrow {\mathbb C}
\end{gather*}
be the morphism obtained by substituting $\overline X$ in place of~$X$ in \eqref{lf}. Def\/ine a holomorphic map
\begin{gather}\label{hodge-gluing}
\varphi \colon \  f^{-1}({\mathbb C}^*)  \longrightarrow f^{-1}_{\overline X}({\mathbb C}^*) ,\qquad (\lambda, L, D) \longmapsto \big(\lambda^{-1}, E, \widetilde{D}/\lambda\big)
\end{gather}
of the restrictions of the f\/iber bundles ${\mathcal M}_{\rm Hod}(X)$ and ${\mathcal M}_{\rm Hod}(\overline{X})$ to ${\mathbb C}^* \subset \mathbb C$. This $\varphi$ is clearly a~biholomorphism.

\begin{Definition}\label{de1} The rank one Deligne--Hitchin moduli space \begin{gather*}{\mathcal M}_{\rm DH} = {\mathcal M}_{\rm DH}(X)\end{gather*} is
\begin{gather*}{\mathcal M}_{\rm DH}(X) = \mathcal M_{\rm Hod}(X)\cup_\varphi\mathcal M_{\rm Hod}(\overline{X}).\end{gather*}
\end{Definition}

We note that the map $f\vert_{f^{-1}({\mathbb C}^*)}$ coincides with the composition
\begin{gather*}
f^{-1}({\mathbb C}^*) \stackrel{\varphi}{\longrightarrow} f^{-1}_{\overline X}({\mathbb C}^*) \stackrel{f_{\overline X}}{\longrightarrow}
 {\mathbb C}^* \stackrel{z\mapsto z^{-1}}{\longrightarrow} {\mathbb C}^* .
\end{gather*}
Therefore, $f$ and the composition
\begin{gather*}
\mathcal M_{\rm Hod}(\overline{X}) \stackrel{f_{\overline X}}{\longrightarrow}  {\mathbb C} \stackrel{z\mapsto z^{-1}}{\longrightarrow}
{\mathbb C}P^1\setminus\{0\}
\end{gather*}
patch together to produce a morphism
\begin{gather}\label{mapf}
f \colon \ {\mathcal M}_{\rm DH} \longrightarrow \mathbb CP^1 .
\end{gather}
For any $\lambda \in \mathbb CP^1$, the f\/iber $f^{-1}(\lambda)$ will be denoted by $f^\lambda$. The f\/iber $f^{\infty}$ is the moduli space of Higgs line bundles of degree zero on~$\overline X$. This $f$ is the twistor f\/ibration mentioned earlier.

\begin{Definition} The {\it degree} of a smooth map $i \colon \Sigma \longrightarrow {\mathcal M}_{\rm DH}(X)$ from a compact oriented surface $\Sigma$ is the degree of the composition $f\circ i \colon \Sigma \longrightarrow {\mathbb C}P^1$, where $f$ is the projection in~\eqref{mapf}.
\end{Definition}

\begin{Lemma}\label{lemdeg} Let $\Sigma$ be a compact Riemann surface of genus $g_\Sigma$ and $i \colon \Sigma \longrightarrow {\mathcal M}_{\rm DH}(X)$ be a~holomorphic immersion. Then the degree of the corresponding holomorphic normal bundle \begin{gather*}{\mathcal N} = (i^*T{\mathcal
M}_{\rm DH})/T\Sigma\end{gather*} is
\begin{gather*}
\deg ({\mathcal N}) = (2 g_X +2)\deg (i)-\deg (T\Sigma) = (2 g_X +2)\deg (i) +2g_\Sigma-2 ,\end{gather*}
where $g_X$ is the genus of $X$.
\end{Lemma}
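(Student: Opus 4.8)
The plan is to compute $\deg(\mathcal N)$ by relating both sides of the exact sequence of bundles $0 \to T\Sigma \to i^*T\mathcal M_{\rm DH} \to \mathcal N \to 0$, which gives the additive relation $\deg(\mathcal N) = \deg(i^*T\mathcal M_{\rm DH}) - \deg(T\Sigma)$. Since $\deg(T\Sigma) = 2 - 2g_\Sigma$, the second equality in the statement follows immediately from the first once we establish $\deg(i^*T\mathcal M_{\rm DH}) = (2g_X+2)\deg(i)$. So the entire content is the computation of the degree of the pulled-back tangent bundle.

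To carry this out, I would first identify the relevant piece of $T\mathcal M_{\rm DH}$ using the twistor fibration $f\colon \mathcal M_{\rm DH} \longrightarrow \mathbb CP^1$ from \eqref{mapf}. Differentiating $f$ gives a morphism $df\colon T\mathcal M_{\rm DH} \longrightarrow f^*T\mathbb CP^1$, whose kernel is the relative tangent bundle $T_{\rm rel}$ along the fibers $f^\lambda$. This yields a short exact sequence
\begin{gather*}
0 \longrightarrow T_{\rm rel} \longrightarrow T\mathcal M_{\rm DH} \longrightarrow f^*T\mathbb CP^1 \longrightarrow 0,
\end{gather*}
so that $\deg(i^*T\mathcal M_{\rm DH}) = \deg(i^*T_{\rm rel}) + \deg(i^*f^*T\mathbb CP^1)$. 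The second summand is $\deg\big((f\circ i)^*T\mathbb CP^1\big) = \deg(T\mathbb CP^1)\cdot\deg(f\circ i) = 2\deg(i)$, using $\deg(T\mathbb CP^1) = 2$ and the definition of $\deg(i)$ as the degree of $f\circ i$. The remaining task is therefore to show $\deg(i^*T_{\rm rel}) = 2g_X\cdot\deg(i)$.

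For the relative tangent term, the key observation is that each fiber $f^\lambda$ is (for $\lambda \in \mathbb C^*$) the de Rham moduli space $f^1 \cong (\mathbb C^*)^{2g_X}$, and near $\lambda = 0$ it is the Dolbeault fiber $\operatorname{Pic}^0(X)\times H^0(X,K_X)$, whose tangent space at any point is $H^1(X,\mathcal O_X)\oplus H^0(X,K_X)$; both models have complex dimension $2g_X$, matching $\tfrac12\dim_{\mathbb C}\mathcal M_{\rm DH} = 2g_X$ relative to the base. The natural candidate is that $\det T_{\rm rel}$ restricted to each fiber, twisted appropriately along the base, contributes the factor $2g_X$ multiplying $\deg(i)$; concretely, I would exhibit $\det T_{\rm rel} \cong f^*\mathcal O_{\mathbb CP^1}(2g_X)$ as line bundles on $\mathcal M_{\rm DH}$, so that $\deg(i^*\det T_{\rm rel}) = 2g_X\cdot\deg(f\circ i) = 2g_X\deg(i)$. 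This isomorphism should follow from the holomorphic symplectic structure on the fibers together with the twistor weight: the symplectic form on $f^\lambda$ pairs the $H^1(X,\mathcal O_X)$ and $H^0(X,K_X)$ directions, and its behavior under the $\mathbb C^*$-action $(\lambda,L,D)\mapsto(c\lambda,L,cD)$ and the gluing \eqref{hodge-gluing} across $\lambda = \infty$ pins down the $\mathcal O(2g_X)$ twist.

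The main obstacle will be the global identification $\det T_{\rm rel}\cong f^*\mathcal O_{\mathbb CP^1}(2g_X)$ over all of $\mathbb CP^1$, including the points $\lambda = 0$ and $\lambda = \infty$ where the fiber structure degenerates from the de Rham to the Dolbeault description. I would handle this by computing the transition behavior explicitly through the gluing map $\varphi$ in \eqref{hodge-gluing}: on $f^{-1}(\mathbb C^*)$ one has trivializations of the relative canonical bundle coming from each of the two Hodge charts $\mathcal M_{\rm Hod}(X)$ and $\mathcal M_{\rm Hod}(\overline X)$, and comparing them via $(\lambda,L,D)\mapsto(\lambda^{-1},E,\widetilde D/\lambda)$ over $\lambda\in\mathbb C^*$ produces exactly the clutching cocycle $\lambda^{2g_X}$ (the power recording the $2g_X$ relative dimensions, each scaling like $\lambda$ under the reciprocal gluing). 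Verifying that this cocycle is precisely $\lambda^{2g_X}$ and not some other power is the delicate point; once it is confirmed, restricting to $i(\Sigma)$ and taking degrees completes the computation and hence the lemma.
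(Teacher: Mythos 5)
Your plan is correct, and it is a genuinely different route from the paper's. The paper's proof is essentially a citation: following the computation of \cite[pp.~555--556]{HKLR} it asserts the global splitting $T{\mathcal M}_{\rm DH}\cong f^*\big({\mathcal O}_{{\mathbb C}P^1}(2)\oplus{\mathcal O}_{{\mathbb C}P^1}(1)^{\oplus 2g_X}\big)$ and then reads off degrees from $\deg\big(i^*f^*{\mathcal O}_{{\mathbb C}P^1}(1)\big)=\deg(i)$, together with the normal bundle sequence $0\to T\Sigma\to i^*T{\mathcal M}_{\rm DH}\to{\mathcal N}\to 0$. You instead use only the relative tangent sequence $0\to T_{\rm rel}\to T{\mathcal M}_{\rm DH}\to f^*T{\mathbb C}P^1\to 0$ and compute $\deg\big(i^*\det T_{\rm rel}\big)$ by a clutching argument through the two Hodge charts. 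This buys self-containedness (no appeal to the general hyper-K\"ahler twistor machinery) and requires less than the paper claims: you only need the determinant of $T_{\rm rel}$, not the full direct-sum decomposition of $T{\mathcal M}_{\rm DH}$.

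Two steps in your sketch need to be made explicit, and both do go through. First, you assert but do not justify the trivializations of $T_{\rm rel}$ over each Hodge chart: they come from the fiberwise group structure of ${\mathcal M}_{\rm Hod}\to{\mathbb C}$ (tensor product of $\lambda$-connections), which identifies $T_{\rm rel}$ with the pullback of its restriction to the identity section $\lambda\mapsto(\lambda,\overline{\partial},\lambda\partial)$; by harmonic representatives the fiber tangent space at the identity is $\overline{H^0(X,K_X)}\oplus H^0(X,K_X)$ for every $\lambda$, including $\lambda=0$, where it is $H^1(X,{\mathcal O}_X)\oplus H^0(X,K_X)$. Second, the cocycle: writing a point of $f^{-1}({\mathbb C}^*)$ as $\big(\lambda,\overline{\partial}+\overline{\omega},\lambda\partial+\beta\big)$, its flat connection is $d+\beta/\lambda+\overline{\omega}$, and the gluing map \eqref{hodge-gluing} sends it to the point of ${\mathcal M}_{\rm Hod}(\overline{X})$ with holomorphic-structure parameter $\beta/\lambda$ and $\mu$-connection parameter $\overline{\omega}/\lambda$, where $\mu=1/\lambda$. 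Differentiating along the fiber, $\big(\dot{\overline{\omega}},\dot{\beta}\big)\mapsto\big(\dot{\beta}/\lambda,\dot{\overline{\omega}}/\lambda\big)$, so the transition matrix between your two trivializations is $\lambda^{-1}$ times a constant swap of the two $g_X$-dimensional blocks. The crucial features are that this matrix is constant along the fibers (which is what makes the clutching description over ${\mathbb C}P^1$ legitimate; a priori the transition could have varied over $f^{-1}({\mathbb C}^*)$) and that its determinant is $\pm\lambda^{-2g_X}$, giving $\det T_{\rm rel}\cong f^*{\mathcal O}_{{\mathbb C}P^1}(2g_X)$ in the convention where $T{\mathbb C}P^1={\mathcal O}_{{\mathbb C}P^1}(2)$ has transition $-\lambda^{-2}$. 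With these two points filled in, your computation $\deg\big(i^*T{\mathcal M}_{\rm DH}\big)=2\deg(i)+2g_X\deg(i)$ and the normal bundle sequence complete the lemma.
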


\begin{proof} As in the computation in \cite[pp.~555--556]{HKLR} it can be shown that the tangent bundle of the complex manifold ${\mathcal M}_{\rm DH}$ is holomorphically isomorphic to the pull-back
\begin{gather*}f^*\big({\mathcal O}_{{\mathbb C}P^1}(2)\oplus \big({\mathcal O}_{{\mathbb C}P^1}(1)^{\oplus 2g_X}\big)\big) ,\end{gather*}
where the f\/irst summand is the tangent bundle of $\mathbb CP^1$ and the direct sum ${\mathcal O}_{{\mathbb C}P^1}(1)^{\oplus 2g_X}$ corresponds to the hyper-K\"ahler structure on $\mathcal M_{dR}$. Now the lemma follows immediately from the fact that $\text{degree}(i^*f^*{\mathcal O}_{{\mathbb
C}P^1}(1)) = \text{degree}(i)$.
\end{proof}

We will compute $\operatorname{Aut}(\mathcal M_{\rm DH})_0$ by proving a series of lemmas.
\begin{Lemma}\label{sections}
Let $\alpha, \beta, \omega, \eta \in H^0(X, K_X)$ be holomorphic $1$-forms. There exists a holomorphic section $s \colon {\mathbb C}P^1 \longrightarrow {\mathcal M}_{\rm DH}$ defined by
\begin{gather*}s(\lambda) = [\lambda, \overline{\partial}+\overline{\omega}+\lambda\overline{\eta}, \lambda(\partial+\alpha)+\beta] \in \mathcal M_{\rm Hod}(X)
 \subset {\mathcal M}_{\rm DH}\end{gather*}
for $\lambda \in {\mathbb C} \subset \mathbb CP^1$. Conversely, every section of $f \colon {\mathcal M}_{\rm DH} \longrightarrow {\mathbb C}P^1 $ is of this form.
\end{Lemma}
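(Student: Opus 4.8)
The plan is to represent a section through its ``Dolbeault data'' on the trivial $C^\infty$ line bundle over the underlying surface, and then to combine the Deligne--Hitchin gluing with a Liouville-type argument to pin down the $\lambda$-dependence.

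For the direct implication I would fix the trivial $C^\infty$ bundle and check that for each $\lambda\in{\mathbb C}$ the data $\big(\lambda,\overline{\partial}+\overline{\omega}+\lambda\overline{\eta},\lambda(\partial+\alpha)+\beta\big)$ is a legitimate point of $\mathcal M_{\rm Hod}(X)$: the operator $\overline{\partial}+\overline{\omega}+\lambda\overline{\eta}$ defines a degree-zero holomorphic structure $L_\lambda$, while $\lambda(\partial+\alpha)+\beta$ obeys the Leibniz rule of a $\lambda$-connection (for $\lambda=0$ it is the Higgs field $\beta$, and for $\lambda\neq0$ the operator divided by $\lambda$ is a genuine, automatically flat, connection). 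Holomorphic dependence on $\lambda$ is clear since everything is polynomial in $\lambda$. The only remaining point is that the family extends holomorphically across $\lambda=\infty$; for this I would push it through the gluing map $\varphi$ of \eqref{hodge-gluing} and check regularity at $z=\lambda^{-1}=0$. Concretely, the associated flat connection is $d+\overline{\omega}+\lambda\overline{\eta}+\alpha+\lambda^{-1}\beta$, whose harmonic connection $1$-form, re-decomposed according to the complex structure of $\overline X$ (which interchanges $(1,0)$- and $(0,1)$-types), produces the $\overline X$-data $\big(z,\ \overline{\partial}^{\overline X}+(\alpha+z\beta),\ z\,\partial^{\overline X}+(\overline{\eta}+z\overline{\omega})\big)$, manifestly holomorphic at $z=0$.

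For the converse, let $s$ be an arbitrary section. Over the $X$-chart write $s(\lambda)=(\lambda,L_\lambda,D_\lambda)$; the holomorphic map $\lambda\mapsto L_\lambda\in\operatorname{Pic}^0(X)$ lifts, since ${\mathbb C}$ is simply connected, to an entire map $\mu\colon{\mathbb C}\to H^{0,1}(X)=\overline{H^0(X,K_X)}$ recording the harmonic $(0,1)$-representative of the holomorphic structure. In this harmonic gauge $d\mu(\lambda)=0$, so flatness of $D_\lambda/\lambda$ forces the $(1,0)$-part of the connection to be closed, hence holomorphic, giving $D_\lambda=\lambda\partial+\nu(\lambda)$ with $\nu\colon{\mathbb C}\to H^0(X,K_X)$ entire (at $\lambda=0$ this $\nu(0)$ is simply the Higgs field). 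The same analysis over the $\infty$-chart yields holomorphic germs $\tilde\mu,\tilde\nu$ at $z=0$. The heart of the argument is the gluing: over ${\mathbb C}^*$ both charts describe the \emph{same} flat connection, so its harmonic form agrees on the two sides modulo the period lattice $2\pi\sqrt{-1}\,H^1(X,{\mathbb Z})$; since this lattice is discrete and ${\mathbb C}^*$ is connected, the two expressions differ by a fixed constant, and comparing the two Hodge decompositions of the single $1$-form gives, up to constants,
\begin{gather*}
\lambda^{-1}\nu(\lambda)=\tilde\mu\big(\lambda^{-1}\big),\qquad
\mu(\lambda)=\lambda\,\tilde\nu\big(\lambda^{-1}\big).
\end{gather*}
Since $\tilde\mu,\tilde\nu$ are holomorphic at $0$, the function $\nu(\lambda)/\lambda$ extends holomorphically across $\lambda=\infty$ while having at most a simple pole at $\lambda=0$, and $\mu(\lambda)$ is entire of at most linear growth. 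A Liouville/removable-singularity argument then forces $\nu(\lambda)=\lambda\alpha+\beta$ and $\mu(\lambda)=\overline{\omega}+\lambda\overline{\eta}$, which is exactly the asserted normal form, the constant lattice terms being absorbed into $\beta$ and $\omega$.

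I expect the main obstacle to be the careful bookkeeping of the harmonic gauge and the monodromy matching: one must justify that, in the gauge making the $(0,1)$-part harmonic, flatness really does render the $(1,0)$-part holomorphic, and that the monodromy determines the harmonic $1$-form uniquely up to the period lattice so that the two charts may be compared. The discreteness-plus-connectedness observation disposes of the lattice ambiguity cleanly, after which the statement reduces to a pure Liouville estimate. As a consistency check, this matches Lemma~\ref{lemdeg}: the $2g_X$ scalar functions making up $\mu,\nu$ turn out to be affine in $\lambda$, i.e.\ sections of ${\mathcal O}_{{\mathbb C}P^1}(1)$, in agreement with the normal bundle ${\mathcal O}_{{\mathbb C}P^1}(1)^{\oplus 2g_X}$ of a section found there.
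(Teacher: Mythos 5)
Your proposal is correct and follows essentially the same route as the paper's proof: lift the section over each chart of ${\mathbb C}P^1$ (simple connectivity plus integrability giving holomorphic data), compare the two lifts over ${\mathbb C}^*$ through their harmonic connection forms, which differ by an element of the period lattice $\Lambda$ that is constant by discreteness and connectedness, and then conclude affine dependence on $\lambda$ from the resulting growth/pole constraints. The only cosmetic difference is that the paper phrases the last step as the connection $1$-form having a first-order pole at $\lambda=\infty$ with residue $\overline{\eta}$, which is exactly the Liouville estimate you carry out.
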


\begin{proof}
The f\/irst part of the lemma is easily verif\/ied. In order to prove the converse direction we start with a lift of the family of holomorphic structures on ${\mathbb C} \subset {\mathbb C}P^1$: As $\mathbb C$ is simply connected, there exists a~map
\begin{gather*}
\lambda \longmapsto \overline{\omega}_1(\lambda) \in \overline{H^0(X, K_X)} , \qquad \lambda \in \mathbb C\end{gather*}
such that \begin{gather*}\phi\circ s(\lambda) = [\overline{\partial}+\overline{\omega}_1(\lambda)]\end{gather*}
for all $\lambda \in {\mathbb C}$, where~$\phi$ is the projection in \eqref{phi} and $[-]$ denotes gauge equivalence class. Let $\beta \in H^0(X, K_X)$ be the (well-def\/ined) Higgs f\/ield of $s(0)$. A lift of $s$ is then given by
\begin{gather*}\widehat{s} (\lambda) = (\lambda,\overline{\partial}+\overline{\omega}_1(\lambda), \beta+\lambda(\partial+\alpha_1(\lambda))\end{gather*}
for some holomorphic map \begin{gather*}\alpha_1 \colon \ \mathbb C \longrightarrow \Omega^{(1,0)}(X) .\end{gather*} The image of $\alpha_1$ is contained in $H^0(X, K_X) \subset \Omega^{(1,0)}(X)$ because the $\lambda$-connections are integrable. Thus, $\lambda \longmapsto s(\lambda)$, $\lambda \in \mathbb C^*$, produces the following family of f\/lat connections on~$X$:
\begin{gather*}\widehat{\nabla}^\lambda = d+\lambda^{-1}\beta+\overline{\omega}_1(\lambda)+ \alpha_1(\lambda) .\end{gather*}

We can repeat this over $\mathbb CP^1\setminus\{0\}$ for the Riemann surface $\overline X$. Indeed, there exists a lift $\widetilde s$ of the form
\begin{gather*}\widetilde{s}(\mu) = (\mu,\partial +\alpha_2(\mu), \mu(\overline{\partial}+ \overline{\omega}_2(\mu))+\overline{\eta})\end{gather*}
with $\mu = \frac{1}{\lambda}$. The corresponding f\/lat connections are denoted by $\widetilde{\nabla}^\lambda$. By the gluing condition in Def\/inition \ref{de1} there exists a~gauge transformation $g(\lambda) \colon X \longrightarrow \mathbb C^*$ for every $\lambda \in \mathbb C^*$ such that \begin{gather*}\widehat{\nabla}^\lambda\cdot g(\lambda) = \widetilde{\nabla}^\lambda ;\end{gather*}
this gauge transformation is unique up to a constant scalar. Since the connection 1-forms of $\widehat{\nabla}^\lambda$ and $\widetilde{\nabla}^\lambda$ are both harmonic 1-forms, their dif\/ference
\begin{gather*}\widehat{\nabla}^\lambda-\widetilde{\nabla}^\lambda = \chi(\lambda)\end{gather*}
is a lattice point
\begin{gather}\label{Gamma}
\chi(\lambda) \in \Lambda := \left\{\!\chi \in \Omega^1(X, \mathbb C) \,|\, d\chi = d*\chi = 0;
\int_\gamma\chi\in2\pi\sqrt{-1}\mathbb Z \text{ for all closed curves } \gamma\!\right\}\!.\!\!\!\!\end{gather}
As the connection 1-forms depend continuously on~$\lambda$, it follows that $\chi$ is independent of $\lambda$. By construction, the connection 1-form of $\lambda \longmapsto \widetilde{\nabla}^\lambda$ has a f\/irst order pole at $\lambda = \infty$ whose residue is~$\overline{\eta}$.
This implies that $\overline{\omega}_1(\lambda)$ must be a polynomial of degree at most one and that $\alpha_1(\lambda)$ is a~constant~$\alpha$.
\end{proof}

\begin{Lemma}\label{tau-lemma} Let $T \colon {\mathcal M}_{\rm DH} \longrightarrow {\mathcal M}_{\rm DH}$ be a~holomorphic automorphism. Then $T$ maps fibers of~$f$ to fibers of~$f$. Moreover, it covers the automorphism $\lambda \longmapsto \tau\lambda$ or $\lambda \longmapsto \frac{\tau}{\lambda}$ of ${\mathbb C}P^1$ for some $\tau \in \mathbb C^*$; if the latter case occurs, then the Jacobian of $X$ is isomorphic to the Jacobian of~$\overline X$.
\end{Lemma}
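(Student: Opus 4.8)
The plan is to show first that any holomorphic automorphism $T$ of ${\mathcal M}_{\rm DH}$ must carry fibers of $f$ to fibers of $f$, and then to analyze the induced self-map of the base $\mathbb CP^1$. The key geometric input is the fibration structure of $f$ together with the distinguished nature of the two special fibers $f^0$ and $f^\infty$, which are the Higgs moduli spaces (for $X$ and $\overline X$ respectively) and hence contain compact subvarieties, whereas every other fiber $f^\lambda$ with $\lambda \in {\mathbb C}^*$ is biholomorphic to the de Rham/Betti picture $({\mathbb C}^*)^{2g}$ and contains no compact submanifold of positive dimension. First I would argue that $T$ preserves the fibration: a generic fiber $f^\lambda$ admits no nonconstant algebraic (indeed holomorphic, using the Betti description) function, so the composition $f\circ T|_{f^\lambda}$ is constant; since this holds for a Zariski-dense set of $\lambda$ and both $T$ and $T^{-1}$ have this property, $T$ must send each fiber into a single fiber, and do so bijectively, exactly as in the proof of Theorem~\ref{thm1}. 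This produces a holomorphic automorphism $\tau_{\mathbb P} \colon \mathbb CP^1 \longrightarrow \mathbb CP^1$ with $f\circ T = \tau_{\mathbb P}\circ f$.

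Next I would pin down $\tau_{\mathbb P}$. Being a holomorphic automorphism of $\mathbb CP^1$ it is a Möbius transformation, so it lies in $\text{PGL}(2,{\mathbb C})$. The constraint is that it must permute the two distinguished fibers $\{f^0, f^\infty\}$ among themselves, because these are precisely the fibers that are \emph{not} biholomorphic to $({\mathbb C}^*)^{2g}$ (they carry a positive-dimensional compact subvariety, namely $\operatorname{Pic}^0(X)$ or $\operatorname{Pic}^0(\overline X)$, which no other fiber does). Hence $\tau_{\mathbb P}$ either fixes both of $0$ and $\infty$, or interchanges them. A Möbius map fixing $\{0,\infty\}$ setwise and fixing them pointwise is $\lambda\longmapsto \tau\lambda$ for some $\tau\in{\mathbb C}^*$; a Möbius map interchanging $0$ and $\infty$ is $\lambda\longmapsto \tau/\lambda$ for some $\tau\in{\mathbb C}^*$. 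This gives exactly the two stated normal forms.

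Finally I would treat the swapping case and extract the isomorphism of Jacobians. If $\tau_{\mathbb P}$ interchanges $0$ and $\infty$, then $T$ restricts to a biholomorphism $f^0 \longrightarrow f^\infty$, i.e., from the Higgs moduli space of $X$ to the Higgs moduli space of $\overline X$. Each of these is $\operatorname{Pic}^0\times H^0(K)$ (for $X$ and $\overline X$ respectively), and the compact part $\operatorname{Pic}^0$ is intrinsically characterized as the image of the unique maximal compact subvariety, or equivalently via the projection $\phi$ whose fibers are the affine $H^0(K)$-directions. Restricting $T$ to the compact factors yields a biholomorphism $\operatorname{Pic}^0(X) \longrightarrow \operatorname{Pic}^0(\overline X)$, which is precisely an isomorphism of the Jacobian of $X$ with the Jacobian of $\overline X$.

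The main obstacle I anticipate is the first step — rigorously establishing that $T$ respects the fibration and induces a well-defined map on the base. Unlike the Hodge case in Theorem~\ref{thm1}, here the ambient space ${\mathcal M}_{\rm DH}$ is only a complex manifold and not quasiprojective, so one cannot invoke algebraicity of functions directly; instead one must use the holomorphic (rather than algebraic) statement that the Betti fibers $({\mathbb C}^*)^{2g}$ admit no positive-dimensional compact analytic subsets together with the openness/density of such fibers. Care is also needed to rule out that $\tau_{\mathbb P}$ sends a generic fiber to one of the two special fibers (which is excluded precisely because those special fibers are not biholomorphic to the generic one), and to ensure the argument is symmetric under $T \mapsto T^{-1}$ so that $T$ is genuinely fiber-preserving rather than merely fiber-contracting.
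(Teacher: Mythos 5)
The genuine gap is in your first step, which is the actual content of the lemma. Your argument rests on the claim that a generic fiber $f^\lambda$ admits no nonconstant ``algebraic (indeed holomorphic, using the Betti description)'' function. This is false: via the Betti description the generic fiber is $({\mathbb C}^*)^{2g}$, which carries an abundance of nonconstant holomorphic functions (for instance the coordinate functions). The statement from \cite[Proposition~2.2]{BBS} used in Theorem~\ref{thm1} is that $f^1$ admits no nonconstant \emph{algebraic} function with respect to its \emph{de Rham} algebraic structure; this does not transfer to holomorphic functions, precisely because the Riemann--Hilbert biholomorphism $f^1 \cong ({\mathbb C}^*)^{2g}$ is not algebraic. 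Since here $T$ is merely holomorphic and ${\mathcal M}_{\rm DH}$ carries no algebraic structure, the constancy of $f\circ T\vert_{f^\lambda}$ cannot be deduced this way --- this is exactly the difficulty the paper flags in its introduction (``algebraicity was at the heart of the proofs in \cite{BBS}''). Your fallback suggestion, that the generic fibers contain no positive-dimensional compact analytic subsets, does not repair this: that property constrains where the compact subvarieties of $f^0$ and $f^\infty$ can go, but it says nothing about whether the holomorphic map $f\circ T\vert_{f^\lambda}\colon ({\mathbb C}^*)^{2g} \longrightarrow {\mathbb C}P^1$ is constant, and such maps are in general very far from constant.

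The paper closes this gap by a completely different, twistor-theoretic argument which your proposal lacks: by Lemma~\ref{sections} (plus linear interpolation), any two points of ${\mathcal M}_{\rm DH}$ lying in \emph{distinct} fibers of $f$ can be joined by a holomorphic section of $f$. If $x_1 \neq x_2$ lay in one fiber $f^{\lambda_0}$ while $T(x_1)$, $T(x_2)$ lay in different fibers, one takes a section $s$ through $T(x_1)$ and $T(x_2)$ and considers the rational curve $T^{-1}\circ s$. Since $T$ is a biholomorphism, the normal bundle of $T^{-1}\circ s$ has the same degree as that of $s$, namely $2g_X$, and then Lemma~\ref{lemdeg} forces $\deg(T^{-1}\circ s)=1$; but $f$ restricted to this curve takes the value $\lambda_0$ at the two distinct points mapping to $x_1$ and $x_2$, so its degree is at least $2$ --- a contradiction. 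Your remaining steps (identifying the induced M\"obius transformation by noting that $\{f^0, f^\infty\}$ are the only fibers containing positive-dimensional compact subvarieties, and extracting the isomorphism $\operatorname{Pic}^0(X)\cong \operatorname{Pic}^0(\overline{X})$ from a biholomorphism $f^0 \cong f^\infty$ using that an abelian variety admits no nonconstant holomorphic map to an affine space) are correct and agree with the paper, but they only become available once fiber preservation is established.
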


\begin{proof} We f\/irst prove that two points in the same f\/iber $p_1, p_2 \in f^{\lambda_0} = f^{-1}(\lambda_0)$ are mapped by~$T$ into a~single f\/iber~$f^{\lambda_1}$. To this end, we claim that any two points~$p_1$,~$p_2$ in dif\/ferent f\/ibers can be joined by a holomorphic section ${\mathbb C}P^1 \longrightarrow {\mathcal M}_{\rm DH}$ of the map~$f$. This can be proved quite easily by linear interpolation with sections of the form given in Lemma~\ref{sections}.

If two points $x_1$, $x_2$ of one f\/iber of $f$ are mapped by~$T$ into two dif\/ferent f\/ibers of $f$, we consider a holomorphic section~$s$ of~$f$ passing through~$T(x_1)$ and~$T(x_2)$. Then $\lambda \longmapsto T^{-1}(s(\lambda))$ is a~holomorphic immersion of~${\mathbb C}P^1$ to ${\mathcal M}_{\rm DH}$, and the degree of its normal bundle is the same as the degree of the normal bundle of the section~$s$. From Lemma~\ref{lemdeg} we obtain
\begin{gather*}(2 g_X+2)\deg (i)-2=2g_X,\end{gather*}
which yields $\deg (i)=1$. On the other hand, its degree is at least two because it passes through two points $x_1$, $x_2$ lying in one f\/iber. In view of this contradiction we conclude that~$T$ maps f\/ibers of~$f$ to f\/ibers of~$f$.

Therefore, there is a unique holomorphic map
\begin{gather*}T'\colon \ {\mathbb C}P^1 \longrightarrow {\mathbb C}P^1\end{gather*}
such that $f\circ T = T'\circ f$. As in the proof of Theorem~\ref{thm1}, $T$ must satisfy $T(\{0,\infty\}) = \{0,\infty\}$, and hence $T'$ is of the form $\lambda \longmapsto \tau\lambda$ or $\lambda \longmapsto \frac{\tau}{\lambda}$ for some $\tau \in \mathbb C^*$. If $T(0) = \infty$ the rank one Higgs moduli spaces for $X$ and $\overline X$ are holomorphically isomorphic. Since there is no nonconstant holomorphic map from an abelian variety
to an af\/f\/ine space, any biholomorphism between the rank one Higgs moduli spaces for $X$ and $\overline X$ produces a~biholomorphism between~$\operatorname{Pic}^0(X)$ and~$\operatorname{Pic}^0(\overline{X})$.
\end{proof}

Let $T \colon {\mathcal M}_{\rm DH} \longrightarrow {\mathcal M}_{\rm DH}$ be a holomorphic automorphism that maps each f\/iber of $f$ to itself, meaning $T(f^\lambda) = f^\lambda$ for all $\lambda \in {\mathbb C}P^1$. Consider the projection
\begin{gather}\label{pi2}
\pi_2 \colon \ f^0  = \operatorname{Pic}^0(X)\times H^0(X, K_X) \longrightarrow H^0(X, K_X) .
\end{gather}
Then, the automorphism $T$ restricted to $f^0$ maps f\/ibers of $\pi_2$ to f\/ibers of $\pi_2$ since $\operatorname{Pic}^0(X)$ does not have any nonconstant holomorphic map to $H^0(X, K_X)$. A~corresponding statement is true for the f\/iber $f^\infty$. Thus, the assumptions in the following
lemma are natural.

\begin{Lemma}\label{almost-id} Let $T \colon {\mathcal M}_{\rm DH} \longrightarrow {\mathcal M}_{\rm DH}$ be a~holomorphic automorphism such that
$T(f^\lambda) = f^\lambda$ for all $\lambda \in \mathbb CP^1$. Assume that $T$ restricted to $\operatorname{Pic}^0(X)\times\{0\} \subset f^0$ is the
identity map, and $T$ restricted to $\operatorname{Pic}^0(\overline{X})\times\{0\} \subset f^\infty$ is the identity map. Then, there exists an integral harmonic $1$-form $\gamma \in \Lambda = H^1(X, \mathbb Z) \subset \operatorname{Harm}(X; \mathbb C)$ such that $T$ is given by
\begin{gather*}T((\lambda, E, D)) = (\lambda, E, D+\lambda\gamma') ,\end{gather*}
where $\gamma = \gamma'+\gamma''$ is the decomposition into $(1,0)$ and $(0,1)$ components.
\end{Lemma}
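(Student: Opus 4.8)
The plan is to exploit that $T$, preserving every fibre of $f$, must send holomorphic sections of $f\colon\mathcal M_{\rm DH}\to\mathbb CP^1$ to holomorphic sections (it covers $\mathrm{id}_{\mathbb CP^1}$ because $T(f^\lambda)=f^\lambda$), and then to pin $T$ down on the dense open set $f^{-1}(\mathbb C^*)$ by testing it against the \emph{horizontal} sections of Lemma~\ref{sections}. First I would recall that for $\lambda\in\mathbb C^*$ the fibre $f^\lambda$ is the space of flat connections, $f^\lambda\cong\operatorname{Harm}(X;\mathbb C)/\Lambda$ via $A\bmod\Lambda\mapsto d+A$, with $\Lambda$ the lattice in~\eqref{Gamma}. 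Write $\pi',\pi''$ for the projections of $\operatorname{Harm}(X;\mathbb C)=H^0(X,K_X)\oplus\overline{H^0(X,K_X)}$ onto its $(1,0)$ and $(0,1)$ parts, and set $\Lambda'=\pi'(\Lambda)\subset H^0(X,K_X)$, $\Lambda''=\pi''(\Lambda)\subset\overline{H^0(X,K_X)}$. Since $\operatorname{Pic}^0(X)$ and $\operatorname{Pic}^0(\overline X)$ are abelian varieties, $\Lambda''$ and $\Lambda'$ are full lattices, and $\operatorname{Pic}^0(X)=\overline{H^0(X,K_X)}/\Lambda''$, $\operatorname{Pic}^0(\overline X)=H^0(X,K_X)/\Lambda'$.

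The key step is to apply $T$ to the horizontal sections, those with $\beta=0=\eta$: for $A\in\operatorname{Harm}(X;\mathbb C)$ with $\pi'A=\alpha$, $\pi''A=\overline\omega$, put $s_A(\lambda)=[\lambda,\overline\partial+\overline\omega,\lambda(\partial+\alpha)]$. These have the constant flat connection $d+A$ over $\mathbb C^*$, with $s_A(0)=([\overline\omega],0)\in\operatorname{Pic}^0(X)\times\{0\}\subset f^0$ and $s_A(\infty)=([\alpha],0)\in\operatorname{Pic}^0(\overline X)\times\{0\}\subset f^\infty$. Because $T$ fixes both zero-sections pointwise, $T(s_A(0))=s_A(0)$ and $T(s_A(\infty))=s_A(\infty)$; as $T\circ s_A$ is again a section, reading off its values at $0$ and $\infty$ forces its Higgs parameters $\beta^T,\eta^T$ to vanish and its two Jacobian parameters to be unchanged modulo $\Lambda''$, resp.\ $\Lambda'$. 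Hence $T\circ s_A=s_{A^T}$ is again horizontal, with $A^T\equiv A$ under both $\pi'\bmod\Lambda'$ and $\pi''\bmod\Lambda''$; equivalently $A^T-A\in\Lambda'\oplus\Lambda''$.

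Next I would deduce that $T$ acts on $f^{-1}(\mathbb C^*)$ by a single translation. At each $\lambda\in\mathbb C^*$ the points $s_A(\lambda)$ exhaust $f^\lambda$, and since $T(s_A(\lambda))=s_{A^T}(\lambda)$ is independent of $\lambda$ (the section $s_{A^T}$ being horizontal), $T$ acts on $f^{-1}(\mathbb C^*)\cong\mathbb C^*\times(\operatorname{Harm}(X;\mathbb C)/\Lambda)$ by one and the same fibrewise biholomorphism $\sigma\colon\bar A\mapsto\overline{A^T}$ in every $f^\lambda$. By the previous step $\sigma(\bar A)-\bar A$ lies in the subgroup $(\Lambda'\oplus\Lambda'')/\Lambda$ of $\operatorname{Harm}(X;\mathbb C)/\Lambda$, which is \emph{discrete} because $\Lambda'\oplus\Lambda''$ is a lattice containing $\Lambda$; as $\bar A\mapsto\sigma(\bar A)-\bar A$ is continuous and $\operatorname{Harm}(X;\mathbb C)/\Lambda$ is connected, it is constant, so $\sigma$ is translation by a fixed $\overline m\in(\Lambda'\oplus\Lambda'')/\Lambda$.

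Finally I would normalise $\overline m$. Choosing $\chi\in\Lambda$ with $\pi''\chi$ equal to the $(0,1)$-part of a representative $m\in\Lambda'\oplus\Lambda''$, the difference $m-\chi$ is purely of type $(1,0)$, and since its $(1,0)$-part still lies in $\Lambda'=\pi'(\Lambda)$ it equals $\gamma'$ for some $\gamma\in\Lambda$ with $(1,0)$-part $\gamma'$. As translation by $\chi\in\Lambda$ is trivial on $\operatorname{Harm}(X;\mathbb C)/\Lambda$, the map $T$ acts on $f^{-1}(\mathbb C^*)$ as the shift $A\mapsto A+\gamma'$ of flat connections, which in the Hodge coordinates of $\mathcal M_{\rm Hod}(X)$ is exactly $(\lambda,E,D)\mapsto(\lambda,E,D+\lambda\gamma')$; both this shift and $T$ are holomorphic and agree on the dense open set $f^{-1}(\mathbb C^*)\subset\mathcal M_{\rm DH}$, hence coincide everywhere. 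The main obstacle — and the source of the integrality of $\gamma$ — is the fact that a section is \emph{not} determined by its values at $0$ and $\infty$: the residual freedom is precisely the diagonal lattice $\Lambda\subset\Lambda'\oplus\Lambda''$, which is what forces the a priori Jacobian-preserving translation $\sigma$ to be by a genuine lattice vector, up to which $T$ is the asserted $(1,0)$-shift.
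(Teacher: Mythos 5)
Your proof is correct and follows essentially the same route as the paper's: you test $T$ against the constant (horizontal) sections of Lemma~\ref{sections}, use the pointwise fixing of $\operatorname{Pic}^0(X)\times\{0\}$ and $\operatorname{Pic}^0(\overline X)\times\{0\}$ to force the image sections to be horizontal with a shift in $\Lambda'\oplus\Lambda''$, invoke discreteness plus connectedness to make the shift uniform, reduce it modulo $\Lambda$ to a form $\gamma'$ with $\gamma\in\Lambda$, and conclude by density of $f^{-1}(\mathbb C^*)$. The only (cosmetic) difference is ordering: the paper first gauge-normalizes the image of the trivial section by $\exp\bigl(-\int\gamma_1\bigr)$ and then propagates by continuity over the connected family of constant sections, whereas you first extract the uniform translation and normalize modulo $\Lambda$ at the end.
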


\begin{proof}
Consider the ``constant'' section of $f$
\begin{gather*}s(\lambda) = [\lambda, \overline{\partial}, \lambda \partial],\end{gather*}
where $d = \partial+\overline{\partial}$ is the decomposition into types. From Lemma~\ref{sections} and the assumption that~$T$ restricted to $\operatorname{Pic}^0(X)\times\{0\} \subset f^0$ is the identity map it follows that $T\circ s$ must be of the form
\begin{gather}\label{specialT}
T\circ s(\lambda) = (\lambda, \overline{\partial}+\gamma_1'', \lambda (\partial+\gamma_2'))
\end{gather}
for some $\gamma_1, \gamma_2 \in \Lambda$. By applying the gauge $\exp\big({-}\int\gamma_1\big)$, the identity in \eqref{specialT} is equivalent to
\begin{gather*}T\circ s(\lambda) = (\lambda, \overline{\partial}, \lambda (\partial+\gamma'))\end{gather*}
 for $\gamma = \gamma_2-\gamma_1$.

Since $T$ is continuous it follows that for every constant section of~$f$ def\/ined by $s_D(\lambda) = (\lambda, L, \lambda D)$, where $D$ is a~holomorphic connection on~$L$, the equality
\begin{gather*}T\circ s_D(\lambda) = (\lambda, L, \lambda (D+\gamma'))\end{gather*}
holds. It should be mentioned that this is equivalent to
\begin{gather*}T\circ s(\lambda) = (\lambda, L\otimes L(\overline{\partial}-\gamma''), \lambda D) .\end{gather*}
Note that for any $\lambda \in {\mathbb C}\setminus\{0\}$ and for every point $p = (\lambda, E, D) \in f^\lambda$, there exists a constant section
which goes through $p$. This completes the proof.
\end{proof}

Let \begin{gather*}s \colon \ {\mathbb C}P^1 \longrightarrow {\mathcal M}_{\rm DH} ,\qquad
\lambda \longmapsto [\lambda, \overline{\partial} +\overline{\omega}+ \lambda\overline{\eta}, \lambda(\partial+\alpha)+\beta]
\end{gather*}
be a section of $f$, where $\alpha, \beta, \eta, \omega \in H^0(X, K_X)$; def\/ine
\begin{gather}\label{Ts}
T_s \colon \ {\mathcal M}_{\rm DH} \longrightarrow {\mathcal M}_{\rm DH} ,\qquad [\lambda, E, D] \longmapsto [\lambda, E\otimes
L(\overline{\partial}+\overline{\omega}+\lambda\overline{\eta}), D\otimes(\lambda(\partial+\alpha)+\beta)] ,
\end{gather}
where the tensor product of two $\lambda$-connections~$D$,~$\widetilde{D}$ on two holomorphic line bundles~$E$,~$\widetilde E$ respectively is the
$\lambda$-connection
\begin{gather*}D\otimes\widetilde{D}(e\otimes\widetilde{e}) = D(e)\otimes \widetilde{e}+e\otimes \widetilde{D}(\widetilde {e})\end{gather*}
on $E\otimes\widetilde{E}$. This $T_s$ is clearly a holomorphic automorphism of ${\mathcal M}_{\rm DH}$.

Let
\begin{gather*}
\operatorname{Aut}({\mathcal M}_{\rm DH})_0 \subset \operatorname{Aut}({\mathcal M}_{\rm DH})
\end{gather*}
be the connected component, containing the identity element, of the group of holomorphic automorphisms of ${\mathcal M}_{\rm DH}$. Using Lemmas~\ref{tau-lemma} and~\ref{almost-id} we can give the following description of $\operatorname{Aut}({\mathcal M}_{\rm DH})_0$.

\begin{Theorem}\label{thmDHa}
There is an isomorphism
\begin{gather*}\operatorname{Aut}({\mathcal M}_{\rm DH})_0 =
\mathcal M_{dR}(X)\times \big(\big(H^0(X, K_X)\times\overline{H^0(X, K_X)}\big)\rtimes {\mathbb C}^*\big)\end{gather*}
with the group structure of the right-hand side given by
\begin{gather}\label{gs37}
\big(\nabla^1, \alpha_1, \overline{\eta}_1, \tau_1\big)\cdot \big(\nabla^2, \alpha_2,\overline{\eta}_2, \tau_2\big) = \big(\nabla^1\otimes\nabla^2, \alpha_1+\tau_1\alpha_2, \overline{\eta}_1+\tfrac{1}{\tau_1}\overline{\eta}_2, \tau_1\tau_2\big) .
\end{gather}
The group $\mathcal M_{dR}(X)$ acts via the automorphisms $T_s$ in~\eqref{Ts}, $H^0(X, K_X)\times\overline{H^0(X, K_X)}$ acts using addition of forms to connections and Higgs bundles, and ${\mathbb C}^*$ acts via multiplication.
\end{Theorem}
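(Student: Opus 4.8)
The plan is to construct an explicit homomorphism $\Psi$ from the right-hand side of the asserted isomorphism into $\operatorname{Aut}({\mathcal M}_{\rm DH})_0$ and then prove that it is bijective. First I would realise each factor concretely. A class $\nabla \in \mathcal M_{dR}(X)$ acts through the automorphism $T_s$ of \eqref{Ts} attached to its twistor section, that is, to the section of Lemma~\ref{sections} whose associated flat connection is constantly $\nabla$ (so with vanishing Higgs and vanishing $\lambda$-linear Dolbeault terms); since tensoring by flat line bundles is multiplicative, this gives a homomorphism $\mathcal M_{dR}(X) \to \operatorname{Aut}({\mathcal M}_{\rm DH})_0$. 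The forms act through the two one-parameter families of $T_s$ in which only a single section datum is switched on, namely $(\lambda, E, D) \mapsto (\lambda, E, D + \alpha)$ for $\alpha \in H^0(X, K_X)$ and $(\lambda, E, D) \mapsto (\lambda, E \otimes L(\overline\partial + \lambda\overline\eta), D)$ for $\overline\eta \in \overline{H^0(X, K_X)}$; these are not horizontal, hence genuinely new. Finally $\tau \in {\mathbb C}^*$ acts through the scaling $A_\tau \colon (\lambda, E, D) \mapsto (\tau\lambda, E, \tau D)$, which patches across the gluing $\varphi$ of Definition~\ref{de1} to an automorphism covering $\lambda \mapsto \tau\lambda$. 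Each lies in the identity component since its parameter space is connected and contains the identity. That $\Psi$ realises the group law \eqref{gs37} then follows by composing the automorphisms directly: $\mathcal M_{dR}(X)$ is central because tensoring by flat bundles commutes with everything in sight, while the twists $\alpha_1 + \tau_1\alpha_2$ and $\overline\eta_1 + \tfrac{1}{\tau_1}\overline\eta_2$ emerge from conjugating the two form-additions by $A_\tau$, which multiplies the $\lambda = 0$ Higgs datum by $\tau$ and the $\lambda$-linear Dolbeault datum by $\tfrac{1}{\tau}$.

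For injectivity I would evaluate an element of $\operatorname{kernel}(\Psi)$ on the distinguished loci used throughout Section~\ref{aut0mdh}. Covering the identity on $\mathbb CP^1$ forces $\tau = 1$; restricting the remaining automorphism to $f^0 = \operatorname{Pic}^0(X) \times H^0(X, K_X)$ and reading off the induced translation of $\operatorname{Pic}^0(X)$ together with the induced shift of the Higgs field via $\pi_2$ of \eqref{pi2} detects the Dolbeault part of $\nabla$ and the form $\alpha$, while the same analysis on $f^\infty$ detects the remaining part of $\nabla$ and the form $\overline\eta$. Hence all four parameters vanish and $\Psi$ is injective.

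The substance of the proof is surjectivity, which I would establish by repeatedly composing an arbitrary $T \in \operatorname{Aut}({\mathcal M}_{\rm DH})_0$ with elements of the image of $\Psi$ until it becomes the identity. By Lemma~\ref{tau-lemma}, $T$ maps fibres of $f$ to fibres and covers either $\lambda \mapsto \tau\lambda$ or $\lambda \mapsto \tfrac{\tau}{\lambda}$; as $T$ lies in the connected component of the identity it cannot interchange $0$ and $\infty$, so the first alternative holds, and after composing with $A_\tau^{-1}$ we may assume $T(f^\lambda) = f^\lambda$ for all $\lambda$. The discussion preceding Lemma~\ref{almost-id} shows that $T\vert_{f^0}$ induces a translation of $\operatorname{Pic}^0(X)$ (a translation, since $T$ is in the identity component) and a shift in $H^0(X, K_X)$, and likewise on $f^\infty$. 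I would then compose $T$ with a suitable de Rham automorphism $T_s$ and with the two form-additions so as to trivialise all four of these translations at once, arranging that the result restricts to the identity on both $\operatorname{Pic}^0(X) \times \{0\} \subset f^0$ and $\operatorname{Pic}^0(\overline X) \times \{0\} \subset f^\infty$. Lemma~\ref{almost-id} then applies and yields $T(\lambda, E, D) = (\lambda, E, D + \lambda\gamma')$ for an integral harmonic form $\gamma \in \Lambda$ of \eqref{Gamma}; its $(1,0)$-part $\gamma'$ lies in $H^0(X, K_X)$, so $d + \gamma'$ is a point of $\mathcal M_{dR}(X)$ and this is precisely the de Rham automorphism $T_s$ attached to the section $[\lambda, \overline\partial, \lambda(\partial + \gamma')]$, which already lies in the image of $\Psi$ and (having trivial Dolbeault datum) does not disturb the previous normalisation. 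Composing it away leaves the identity, whence $T \in \operatorname{image}(\Psi)$.

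The hard part is the simultaneous normalisation in the surjectivity argument. One must verify that the de Rham factor together with the two form-additions can kill the induced translations on $\operatorname{Pic}^0(X)$ and $\operatorname{Pic}^0(\overline X)$ at the same time; this requires tracking how the gluing $\varphi$ of \eqref{hodge-gluing} and the Riemann--Hilbert correspondence tie the behaviour over $f^0$ to that over $f^\infty$, matched against a dimension count of $2g$ complex directions on each of the two fibres versus the $2g + g + g = 4g$ parameters of $\mathcal M_{dR}(X) \times \big(H^0(X, K_X) \times \overline{H^0(X, K_X)}\big)$. The second delicate point is recognising the residue produced by Lemma~\ref{almost-id} as an honest point $d + \gamma'$ of the de Rham factor rather than a new discrete automorphism; together with the affineness in $\lambda$ forced by the converse half of Lemma~\ref{sections}, this is what cuts $\operatorname{Aut}({\mathcal M}_{\rm DH})_0$ down to the stated finite-dimensional group, in sharp contrast with the infinite-dimensional $\mathbb V$ that appears for ${\mathcal M}_{\rm Hod}$ in Theorem~\ref{thm1}.
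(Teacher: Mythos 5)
Your proof follows the same route as the paper's: reduce to fibre-preserving automorphisms via Lemma~\ref{tau-lemma} and the lifted ${\mathbb C}^*$-action, normalise the restrictions to $f^0$ and $f^\infty$ by composing with automorphisms $T_s$ from \eqref{Ts}, invoke Lemma~\ref{almost-id}, and absorb the residual automorphism $(\lambda,E,D)\mapsto(\lambda,E,D+\lambda\gamma')$ into the de~Rham factor as the automorphism of the constant section through $d+\gamma'$. The surjectivity half and the verification of the group law \eqref{gs37} are sound and essentially coincide with the paper's argument.

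The injectivity argument, however, contains a genuine gap. You assert that restricting a kernel element to $f^0$ and to $f^\infty$ detects all of $\nabla$ (``the Dolbeault part'' on one fibre, ``the remaining part'' on the other), and conclude that all parameters vanish. What these two restrictions actually detect is only the pair of Dolbeault shadows of $\nabla$: writing $\nabla=d+\alpha_0+\overline{\omega}_0$, they determine $\overline{\omega}_0$ modulo $\Lambda''$ and $\alpha_0$ modulo $\Lambda'$, where $\Lambda''$ and $\Lambda'$ denote the images of the lattice $\Lambda$ of \eqref{Gamma} in $\overline{H^0(X,K_X)}$ and in $H^0(X,K_X)$. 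The map $\mathcal M_{dR}(X)\longrightarrow \operatorname{Pic}^0(X)\times\operatorname{Pic}^0(\overline{X})$ recording these two shadows is not injective: its kernel is the rank-$2g$ lattice of flat connections $d+\gamma'$ with $\gamma\in\Lambda$, and these are nontrivial points of $\mathcal M_{dR}(X)$ for $\gamma\neq 0$, since a nonzero holomorphic $1$-form cannot have all its periods in $2\pi\sqrt{-1}\,{\mathbb Z}$. The corresponding automorphism $(\lambda,E,D)\mapsto(\lambda,E,D+\lambda\gamma')$ is exactly the family exhibited in Lemma~\ref{almost-id}: it is the identity on $f^0\cup f^\infty$, yet over $\lambda\neq 0,\infty$ it changes the monodromy representation by the nontrivial character $c\mapsto\exp\big(\oint_c\gamma'\big)$, so it is not the identity of $\mathcal M_{\rm DH}$. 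In other words, the very automorphisms you handle correctly on the surjectivity side are the ones your injectivity test cannot rule out. The repair is short: a kernel element must also act trivially on a fibre $f^\lambda$ with $\lambda\neq 0,\infty$, which forces $\gamma=0$; equivalently, apply the kernel element to the trivial constant section $\lambda\mapsto[\lambda,\overline{\partial},\lambda\partial]$, note that $T_s$ applied to this section returns $s$ itself, and use the uniqueness statement of Lemma~\ref{sections} together with the gauge analysis from Lemma~\ref{almost-id} to conclude that $s$ is the trivial section, hence that $(\nabla,\alpha,\overline{\eta},\tau)$ is the identity element.
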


\begin{proof} Using Lemma \ref{tau-lemma} and the lift of the $\mathbb C^*$ action on $\mathbb C P^1$ we can restrict to automorphisms $T\in \operatorname{Aut}(\mathcal M_{\rm DH})_0$ such that
\begin{gather*}T\big(f^\lambda\big) = f^\lambda\end{gather*} for all $\lambda\in\mathbb CP^1$. We claim that there exists a section $s \colon {\mathbb C}P^1 \longrightarrow {\mathcal M}_{\rm DH}$ of $f$ such that $T_s\circ T$ is of the form given in Lemma~\ref{almost-id}, where $T_s$ is constructed in~\eqref{Ts}.

Recall that $T$ acts f\/iberwise on the f\/ibers of~$\pi_2$ in~\eqref{pi2}. An automorphism of $\operatorname{Pic}^0(X)$ which is homotopic to the identity map is a translation by an element of $\operatorname{Pic}^0(X)$. Hence, there exists $\alpha, \omega \in H^0(X, K_X)$ such that
\begin{gather*}T(0, L, 0) = (0, L\otimes L(\overline{\partial}+\overline{\omega}), \alpha)\end{gather*}
for all $(0, L, 0) \in \pi_2^{-1}(0) \subset f^0$. The same argument yields that there are $\beta, \eta \in H^0(X, K_X)$ such that the automorphism $T_s$, for the section $s$ def\/ined by
\begin{gather*}s(\lambda) = (\lambda, \overline{\partial} -\overline{\omega}-\lambda\overline{\eta}, \lambda(\partial-\beta)-\alpha) ,\end{gather*}
has the desired properties.

Moreover, by applying the arguments in the proof of Lemma~\ref{almost-id}, we see that a~section~$s$ of~$f$ such that $T_s\circ T$ is of the form given in Lemma~\ref{almost-id} is unique up to tensoring with a constant section of the form $\lambda \longmapsto (\lambda, \overline{\partial}, \lambda(\partial+\gamma'))$ for some $\gamma \in H^1(X, \mathbb Z) \subset \operatorname{Harm}(X, \mathbb C)$.

Altogether, we obtain that any element in $\operatorname{Aut}({\mathcal M}_{\rm DH})_0$ is of the form $(\nabla, \alpha, \overline{\eta}, \tau)\in\mathcal M_{dR}(X)\times \big(\big(H^0(X, K_X)\times\overline{H^0(X, K_X)}\big)\rtimes {\mathbb C}^*\big)$ for the actions of the components $\mathcal M_{dR}(X)$, $H^0(X, K_X)$, $\overline{H^0(X, K_X)}$ and ${\mathbb C}^*$ on ${\mathcal M}_{\rm DH}$ as described above. The group structure of $\operatorname{Aut}({\mathcal M}_{\rm DH})_0$ is easily verif\/ied to be as in \eqref{gs37}.
\end{proof}

\subsection[Automorphisms of ${\mathcal M}_{\rm DH}$ preserving a cohomology class]{Automorphisms of $\boldsymbol{{\mathcal M}_{\rm DH}}$ preserving a cohomology class}

In this subsection we assume that $g = \operatorname{genus}(X) > 1$.

Till now we have restricted to automorphism of ${\mathcal M}_{\rm DH}$ lying in ${\rm Aut}({\mathcal M}_{\rm DH})_0$. In this subsection we def\/ine a natural cohomology class on ${\mathcal M}_{\rm DH}$ and compute all automorphisms which preserve this particular class.

Any holomorphic automorphism $t \colon X \longrightarrow X$ induces an automorphism $T_t \colon{\mathcal M}_{\rm DH} \longrightarrow {\mathcal M}_{\rm DH}$ via pull-back; note that $t$ is also a holomorphic automorphism of~$\overline X$. More precisely, for $\lambda \in \mathbb C^*$ and $(\lambda, L, D) \in f^\lambda$, def\/ine
\begin{gather}\label{Teq}
T_t(\lambda, L, D) = (\lambda, t^*L, t^* D) .
\end{gather}
It is straight-forward to check that \eqref{Teq} extends holomorphically to $f^0$ and $f^\infty$ giving rise to a~holomorphic automorphism of ${\mathcal M}_{\rm DH}$; the inverse of $T_t$ is $T_{t^{-1}}$. Similarly, if \begin{gather*}s \colon \ \overline{X} \longrightarrow X\end{gather*} is a~holomorphic isomorphism, then def\/ine
\begin{gather*}
T_{s} \colon \ \mathcal M_{\rm Hod}(X) \longrightarrow \mathcal M_{\rm Hod}(\overline{X}) ,\qquad (\lambda, L, D) \longmapsto (\lambda, s^*L, s^*D).
\end{gather*}
But $\mathcal M_{\rm Hod}(\overline{X})$ is identif\/ied with ${\mathcal M}_{\rm DH}\setminus f^0$ (this follows immediately from~\eqref{hodge-gluing}). Therefore, $T_{s}$ gives a biholomorphism of $\mathcal M_{\rm Hod}(X)$ with ${\mathcal M}_{\rm DH}\setminus f^0$. This biholomorphism extends~$f^\infty$, and hence~$T_{s}$ is a holomorphic automorphism of ${\mathcal M}_{\rm DH}$ over the involution $\lambda \longmapsto \frac{1}{\lambda}$ of $\mathbb CP^1$. Explicitly, for $\lambda\neq0,\infty$ it is given by
\begin{gather*}T_{s}(\lambda, L, D)=\varphi^{-1}((\lambda, s^*L, s^* D)).\end{gather*}

These examples can be generalized as follows. Consider $\Lambda$ in~\eqref{Gamma}; let $\Lambda''$ be its image in $\overline{H^0(X, K_X)} \subset \operatorname{Harm}^1(X; {\mathbb C})$. Then
\begin{gather*}\operatorname{Pic}^0(X) = \overline{H^0(X, K_X)}/\Lambda'' ,\end{gather*} and every $\gamma'' \in \Lambda''$ determines a unique $\gamma \in \Lambda$ such that $\gamma = (\gamma-\gamma'')+\gamma''$ is the decomposition into $(1,0)$ and $(0,1)$ parts. As $\gamma \in \Lambda$ is imaginary we get that $\gamma' = (\gamma-\gamma'') = -\overline{\gamma}''$. Hence, any holomorphic automorphism
\begin{gather*}
t \colon  \ \operatorname{Pic}^0(X) \longrightarrow \operatorname{Pic}^0(X)
\end{gather*}
with $t(0) = 0$ gives rise to a map
\begin{gather*}\widehat{t} = \overline{t}\oplus t \colon \ H^0(X, K_X)\oplus \overline{H^0(X, K_X)} \longrightarrow H^0(X, K_X)\oplus
\overline{H^0(X, K_X)}\end{gather*}
satisfying
\begin{gather*}\widehat{t}(\Lambda) = \Lambda .\end{gather*}
Explicitly, $\widehat t$ is given by
\begin{gather*}\widehat t(\alpha\oplus\overline{\eta}) = \overline{ t(\overline{\alpha})}\oplus
t(\overline{\eta})
\end{gather*}
for $\alpha,\eta\in H^0(X, K_X)$. Decompose $d = \partial+\overline{\partial}$. For any $\lambda \in \mathbb C^*$, every element $p \in f^\lambda$ is of the form
\begin{gather*}p = (\lambda, \overline{\partial}+\overline{\omega}, \lambda(\partial+\alpha))\end{gather*}
for some $\alpha, \omega \in H^0(X, K_X) $. We then def\/ine
\begin{gather}\label{Teq2}
T(\lambda, \overline{\partial}+\overline{\omega}, \lambda(\partial+\alpha)) = (\lambda, \overline{\partial}+t(\overline{\omega}),  \lambda(\partial+\overline{t}(\alpha))).
\end{gather}
Again, \eqref{Teq2} yields a well-def\/ined automorphism of ${\mathcal M}_{\rm DH}$ which covers the identity on ${\mathbb C}P^1$. Finally if $s \colon \operatorname{Pic}^0(X) \longrightarrow \operatorname{Pic}^0(\overline{X})$ is an isomorphism satisfying $s(0) = 0 \in \operatorname{Pic}^0(\overline{X})$, we can construct an of\/f-diagonal isomorphism
\begin{gather*}\widehat{s}=\left( \begin{matrix}
 0 & s \\  \overline{s} & 0   \end{matrix}  \right) \colon \ H^0(X, K_X)\oplus\overline{H^0(X, K_X)}
 \longrightarrow H^0(X, K_X)\oplus\overline{H^0(X, K_X)}\end{gather*}
with
\begin{gather*}\widehat{s}(\Lambda) = \Lambda .\end{gather*}
Thus, the formula
\begin{gather}\label{barTeq2}
T(\lambda, \overline{\partial}+\overline{\omega}, \lambda(\partial+\alpha)) = (\lambda, \partial+s(\overline{\omega}), \lambda(\overline{\partial}+
\overline{s}(\alpha))) \in {\mathcal M}_{\rm DH}(\overline{X})  = {\mathcal M}_{\rm DH}( X)
\end{gather}
def\/ines an automorphism of ${\mathcal M}_{\rm DH}( X)$ covering $\lambda \longmapsto \lambda^{-1}$.

\begin{Proposition}\label{prop3}
The components of the space of automorphisms of ${\mathcal M}_{\rm DH}$ are given by the space of holomorphic isomorphisms $t \colon \operatorname{Pic}^0(X) \longrightarrow \operatorname{Pic}^0(X)$ with $t(0) = 0$ together with the space of holomorphic isomorphisms $s \colon \operatorname{Pic}^0(X) \longrightarrow \operatorname{Pic}^0(\overline{X})$ with $s(0) = 0$.
\end{Proposition}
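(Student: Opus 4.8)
The plan is to compute $\pi_0(\operatorname{Aut}({\mathcal M}_{\rm DH}))$ by attaching to every automorphism a discrete invariant living in one of the two listed sets, and then showing that this invariant classifies the connected components. First I would use Lemma~\ref{tau-lemma}: any $T \in \operatorname{Aut}({\mathcal M}_{\rm DH})$ permutes the fibers of $f$ and covers either $\lambda \mapsto \tau\lambda$ or $\lambda \mapsto \tau/\lambda$, so $T$ either preserves the pair $(f^0,f^\infty)$ (type~I) or exchanges $f^0$ and $f^\infty$ (type~II, which can occur only when the two Jacobians are isomorphic). In type~I the restriction $T\vert_{f^0}$ is an automorphism of $f^0 = \operatorname{Pic}^0(X)\times H^0(X, K_X)$; since $\operatorname{Pic}^0(X)$ carries no nonconstant holomorphic map to the affine space $H^0(X, K_X)$, the map $T$ preserves the fibers of $\pi_2$ in \eqref{pi2} and hence induces a holomorphic isomorphism of $\operatorname{Pic}^0(X)$ on each such fiber. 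Writing each such isomorphism as a translation composed with a group automorphism (rigidity of abelian varieties), its group-automorphism part $t$ fixes $0$ and, being discrete-valued yet continuous in the fiber, is independent of the fiber; this $t$ is the invariant. In type~II the same argument applied to $T\vert_{f^0}\colon f^0 \to f^\infty$ yields a holomorphic isomorphism $s\colon \operatorname{Pic}^0(X) \to \operatorname{Pic}^0(\overline X)$ with $s(0)=0$. The type is detected by the discrete condition $T(f^0)=f^0$ versus $T(f^0)=f^\infty$, and within each type the invariant takes values in the discrete group of automorphisms of an abelian variety fixing the origin; both depend continuously on $T$, so the assignment $\Psi\colon T \mapsto (\text{type},t)$ or $(\text{type},s)$ is locally constant and descends to a map on $\pi_0(\operatorname{Aut}({\mathcal M}_{\rm DH}))$.

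Surjectivity of $\Psi$ is provided by the explicit automorphisms constructed just before the statement: formula \eqref{Teq2} realizes every $t\colon\operatorname{Pic}^0(X)\to\operatorname{Pic}^0(X)$ with $t(0)=0$ by an automorphism covering the identity of $\mathbb CP^1$, and formula \eqref{barTeq2} realizes every $s\colon\operatorname{Pic}^0(X)\to\operatorname{Pic}^0(\overline X)$ with $s(0)=0$ by an automorphism covering $\lambda\mapsto\lambda^{-1}$. Hence every value in the two listed sets is attained.

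The core of the argument is injectivity on components, i.e., that each fiber of $\Psi$ is a single coset of $\operatorname{Aut}({\mathcal M}_{\rm DH})_0$ and therefore (being a translate of a connected group) a single component. If $T,T'$ are of type~I inducing the same $t$, then $T^{-1}T'$ induces the identity group-automorphism; if they are of type~II inducing the same $s$, then $T^{-1}T'$ is of type~I and induces $s^{-1}\circ s = \operatorname{id}$. Thus in all cases it suffices to show that an automorphism $T$ covering some $\lambda\mapsto\tau\lambda$ and inducing the identity on $\operatorname{Pic}^0(X)$ lies in $\operatorname{Aut}({\mathcal M}_{\rm DH})_0$. Here I invoke Theorem~\ref{thmDHa}: after rescaling $\lambda$ by the ${\mathbb C}^*$-action to arrange $\tau=1$, one finds a section $s$ of $f$, exactly as in the proof of Theorem~\ref{thmDHa}, so that $T_s\circ T$ fixes both $\operatorname{Pic}^0(X)\times\{0\}\subset f^0$ and $\operatorname{Pic}^0(\overline X)\times\{0\}\subset f^\infty$ pointwise. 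Lemma~\ref{almost-id} then forces $T_s\circ T$ to be of the form $(\lambda,E,D)\mapsto(\lambda,E,D+\lambda\gamma')$ for some $\gamma\in\Lambda = H^1(X,{\mathbb Z})$, and each such map equals the $T_{s_\gamma}$ attached via \eqref{Ts} to the section $\lambda\mapsto(\lambda,\overline\partial,\lambda(\partial+\gamma'))$, i.e., to the flat line bundle $({\mathcal O}_X, d+\gamma')$. Since $\mathcal{M}_{dR}(X)$ is connected this $T_{s_\gamma}$, and likewise the normalizing $T_s$, lie in the factor $\mathcal{M}_{dR}(X)$ of $\operatorname{Aut}({\mathcal M}_{\rm DH})_0$; hence $T\in\operatorname{Aut}({\mathcal M}_{\rm DH})_0$. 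Combined with surjectivity, this identifies $\pi_0(\operatorname{Aut}({\mathcal M}_{\rm DH}))$ with the disjoint union of the two spaces of isomorphisms.

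The main obstacle is precisely this last injectivity step. One must normalize $T$ simultaneously along $f^0$ and $f^\infty$ to meet the hypotheses of Lemma~\ref{almost-id}, and the delicate point is that fixing the two Jacobian sections does not by itself fix $T$ on the neighbouring fibers: the residual freedom is an integral ambiguity $D\mapsto D+\lambda\gamma'$ with $\gamma\in\Lambda$. Recognizing this discrete family as already contained in the connected de Rham factor $\mathcal{M}_{dR}(X)$ — rather than as producing extra components — is what makes the rigidity in Lemma~\ref{almost-id} (through the lattice $\Lambda$ and the harmonicity of the connection forms) together with the connectedness of $\mathcal{M}_{dR}(X)$ indispensable.
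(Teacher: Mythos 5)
Your proposal follows the same overall architecture as the paper's proof (component invariant $t$ or $s$, surjectivity via \eqref{Teq2} and \eqref{barTeq2}, final rigidity via Lemma~\ref{almost-id} and absorption of the lattice ambiguity $D\mapsto D+\lambda\gamma'$ into the connected factor ${\mathcal M}_{dR}(X)$), but it has a genuine gap at the decisive step. You assert that an automorphism $T$ covering the identity of ${\mathbb C}P^1$ and inducing the identity (up to translation) on $\operatorname{Pic}^0(X)$ via $f^0$ can be composed with some $T_s$, ``exactly as in the proof of Theorem~\ref{thmDHa}'', so that $T_s\circ T$ fixes both $\operatorname{Pic}^0(X)\times\{0\}\subset f^0$ and $\operatorname{Pic}^0(\overline X)\times\{0\}\subset f^\infty$ pointwise. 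That citation does not apply: in the proof of Theorem~\ref{thmDHa} the automorphism is assumed to lie in $\operatorname{Aut}({\mathcal M}_{\rm DH})_0$, so its restrictions to \emph{both} Jacobians are homotopic to the identity, hence translations, which a suitable $T_s$ can cancel. Your $T$ is precisely \emph{not} known to lie in $\operatorname{Aut}({\mathcal M}_{\rm DH})_0$ --- that is what you are trying to prove --- and your hypothesis only controls its restriction on the $f^0$ side. On $f^\infty$ the restriction is a translation composed with some origin-preserving automorphism $t_2$ of $\operatorname{Pic}^0(\overline X)$, and since every element of $\operatorname{Aut}({\mathcal M}_{\rm DH})_0$ (by Theorem~\ref{thmDHa}) acts on the $\operatorname{Pic}^0(\overline X)$-direction of $f^\infty$ only by translations, no choice of $T_s$ can remove a nontrivial $t_2$. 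So the hypotheses of Lemma~\ref{almost-id} cannot be met until you prove $t_2=\operatorname{id}$, i.e., that the actions of $T$ on the two ends of the twistor fibration are coupled.

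This coupling is exactly the content of \eqref{whitehatt}, $\widehat t_1=t_2\oplus t_1$, and it is where the paper's proof does its real work: one first shows that constant sections are mapped to constant sections, applies this to the sections with data $\overline\omega=\gamma''$, $\alpha=\gamma'$ for $\gamma=\gamma'+\gamma''\in\Lambda$ (which pass through the trivial gauge class at $\lambda=1$), and concludes that $t_2\oplus t_1$ preserves the lattice $\Lambda$; since elements of $\Lambda$ satisfy $\gamma'=-\overline{\gamma''}$, this forces $t_2=\overline{t}_1$, and in particular $t_1=\operatorname{id}$ implies $t_2=\operatorname{id}$. Without this step (or an equivalent one, e.g., a topological argument comparing the actions of $t_1$ and $t_2$ on $H^1\big(f^{-1}({\mathbb C}^*),{\mathbb Z}\big)$, into which $H^1\big(\operatorname{Pic}^0(X),{\mathbb Z}\big)$ and $H^1\big(\operatorname{Pic}^0(\overline X),{\mathbb Z}\big)$ both inject compatibly), the component invariant would a priori be the \emph{pair} $(t_1,t_2)$ rather than $t$ alone, and the proposition as stated would not follow. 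The remainder of your argument --- surjectivity from the explicit models, and the recognition that the residual integral ambiguity lies in the connected de Rham factor --- is correct and agrees with the paper.
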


\begin{proof} Let $T \colon {\mathcal M}_{\rm DH} \longrightarrow {\mathcal M}_{\rm DH}$ be a holomorphic automorphism which maps the f\/iber $f^0$ to itself. After composing with a suitable element of $\operatorname{Aut}({\mathcal M}_{\rm DH})_0$, compare with Section~\ref{aut0mdh} and in particular with the proof of Theorem~\ref{thmDHa}, we may assume that $T$ maps $\{0\}\times\operatorname{Pic}^0(X)\times\{0\} \subset f^0$ to itself and $\{\infty\}\times\operatorname{Pic}^0(\overline{X}) \times\{0\} \subset f^\infty$ to itself such that $T(0, \overline{\partial}, 0) = (0, \overline{\partial}, 0)$ and $T(\infty, \partial, 0) = (\infty, \partial, 0)$ (where $d = \partial+\overline{\partial}$ as before). We need to show that $T$ is, up to another automorphism in the identity component, of the form~\eqref{Teq2}. In view of Lemma~\ref{almost-id} it remains to prove that the restrictions $t_1 \colon \operatorname{Pic}^0(X) \longrightarrow \operatorname{Pic}^0(X)$ and $t_2 \colon \operatorname{Pic}^0(\overline{X}) \longrightarrow \operatorname{Pic}^0(\overline{X})$ f\/it together, i.e.,
\begin{gather}\label{whitehatt}\widehat{t}_1 = t_2\oplus t_1\end{gather}
on $H^0(X, K_X)\oplus \overline{H^0(X, K_X)}$. This follows from an argument similar to one in the proof of Lemma~\ref{almost-id}: First note that constant sections
\begin{gather*}\lambda \longmapsto (\lambda, \overline{\partial}+\overline{\omega},
\lambda(\partial+\alpha))\end{gather*}
are mapped to constant sections of the form
\begin{gather*}\lambda \longmapsto (\lambda, \overline{\partial}+t_1(\overline{\omega}), \lambda(\partial+t_2(\alpha)))\end{gather*}
for af\/f\/ine linear lifts $t_1 \colon \overline{H^0(X, K_X)}  \longrightarrow \overline{H^0(X, K_X)}$ and $t_2 \colon H^0(X, K_X) \longrightarrow H^0(X, K_X)$ of $t_1 \colon \operatorname{Pic}^0(X) \longrightarrow \operatorname{Pic}^0(X)$ and $t_2 \colon \operatorname{Pic}^0(\overline{X}) \longrightarrow \operatorname{Pic}^0(\overline{X})$. Actually, after applying automorphisms of the form given in Lemma \ref{almost-id}, we can assume that both lifts are linear. If we apply the above considerations to $\overline{\omega}=\gamma''$ and $\alpha=\gamma'$ where $\gamma=\gamma'+\gamma''\in\Lambda$ we get that $t_1(\gamma'')\in\Lambda''$, $t_2(\gamma')\in\Lambda'$ and
 \begin{gather*}t_2(\gamma')+t_1(\gamma'')\in\Lambda\end{gather*}
as \looseness=1 we have the constant section which goes through the trivial gauge class at $\lambda=1$. Hence, \eqref{whitehatt}~holds and if $\operatorname{Pic}^0(X)$ and $\operatorname{Pic}^0(\overline{X})$ are not isomorphic the proposition follows from Lem\-ma~\ref{tau-lemma}.

If $\operatorname{Pic}^0(X)$ and $\operatorname{Pic}^0(\overline{X})$ are isomorphic, the proposition follows from the f\/irst part of the proof together with the observation that \eqref{barTeq2} gives an automorphism of ${\mathcal M}_{\rm DH}$.
\end{proof}

\begin{Remark}\label{re-1} We note that $\operatorname{Aut}({\mathcal M}_{\rm DH})_0$ is a normal subgroup of~$\operatorname{Aut}({\mathcal M}_{\rm DH})$, and the quotient group $\operatorname{Aut}({\mathcal M}_{\rm DH})/\operatorname{Aut}({\mathcal M}_{\rm DH})_0$ is identif\/ied with the connected components of $\operatorname{Aut}({\mathcal M}_{\rm DH})$. In particular, the connected components of $\operatorname{Aut}({\mathcal M}_{\rm DH})$ is also a group. Proposition~\ref{prop3} describes the connected components, while Theorem~\ref{thmDHa} describes $\operatorname{Aut}({\mathcal M}_{\rm DH})_0$. However, these two results do not determine the group structure of $\operatorname{Aut}({\mathcal M}_{\rm DH})$.
\end{Remark}

The complex manifold ${\mathcal M}_{\rm DH} = {\mathcal M}_{\rm DH}(X)$ has a natural integer cohomology class $\theta_{X} \in H^2({\mathcal M}_{\rm DH}, {\mathbb Z})$ which can be constructed as follows: Consider the map~$\phi$ in~\eqref{phi}. Identify $\operatorname{Pic}^0(X)$ with the character variety $\operatorname{Hom}(\pi_1(X), \text{U}(1))$ as before, and consider $\phi$ as a map to $\operatorname{Hom}(\pi_1(X), \text{U}(1))$. Let
\begin{gather*}
\phi_{\overline X} \colon \ {\mathcal M}_{\rm Hod}({\overline X}) \longrightarrow \operatorname{Hom}(\pi_1(X), \text{U}(1)) = \operatorname{Hom}(\pi_1({\overline X}), \text{U}(1))
\end{gather*}
be the map obtained by substituting ${\overline X}$ in place of $X$ in the construction of $\phi$. The two maps $\phi$ and $\phi_{\overline X}$ coincide over the intersection ${\mathcal M}_{\rm Hod}(X)\cap \mathcal M_{\rm Hod}(\overline{X}) = f^{-1}({\mathbb C}^*) \subset {\mathcal M}_{\rm DH}(X)$. Consequently, we get a map
\begin{gather*}
\phi_{{\rm DH},X} \colon \  {\mathcal M}_{\rm DH}(X) \longrightarrow \operatorname{Hom}(\pi_1(X), \text{U}(1)) .
\end{gather*}
Now def\/ine
\begin{gather*}
\theta_{X} := \phi^*_{{\rm DH},X}\theta_0 \in H^2({\mathcal M}_{\rm DH}(X), {\mathbb Z}),
\end{gather*}
where $\theta_0$ is the integral cohomology class in \eqref{the}.

The symplectic form $\widetilde\theta$ on the $\operatorname{Hom}(\pi_1(X), \text{U}(1))$ (see~\eqref{the}) depends on the orientation of~$X$, i.e., if the orientation of the topological surface underlying $X$ is changed, then the corresponding symplectic form is $-\widetilde\theta$. Therefore,
with respect to the isomorphism ${\mathcal M}_{\rm DH}(X) \stackrel{\sim}{\longrightarrow} {\mathcal M}_{\rm DH}(\overline{X})$, we have
\begin{gather*}
\theta_{X} = -\theta_{\overline X} ,
\end{gather*}
where $\theta_{\overline X} \in H^2({\mathcal M}_{\rm DH}({\overline X}), {\mathbb Z})$ is obtained by substituting ${\overline X}$ in place of $X$ in the construction of~$\theta_{X}$. Let
\begin{gather}\label{atx}
\operatorname{Aut}({\mathcal M}_{\rm DH}(X))^{\theta_X} \subset \operatorname{Aut}({\mathcal M}_{\rm DH}(X))
\end{gather}
be the subgroup that f\/ixes the cohomology class $\theta_X$.

For any holomorphic automorphism $t \in \operatorname{Aut}(\operatorname{Pic}^0(X))$, the corresponding automorphism $T_t$ of ${\mathcal M}_{\rm DH}(X)$ f\/ixes the cohomology class $\theta_X$ if and only if \begin{gather*}t \in \Gamma_X\end{gather*} (def\/ined in~\eqref{GX}); see the proof of Proposition~\ref{propo1}.

As in Section \ref{Autocoh} we have an action of $\Gamma_X$ on
$\operatorname{Aut}({\mathcal M}_{\rm DH})$ which preserves the
identity component $\operatorname{Aut}({\mathcal M}_{\rm DH})_0 \subset \operatorname{Aut}({\mathcal
M}_{\rm DH})$. Consider the corresponding semi-direct product
\begin{gather*}
\widetilde{\Gamma}_{{\rm DH},X} :=
\operatorname{Aut}({\mathcal M}_{\rm DH})_0\rtimes \Gamma_X .
\end{gather*}
Then, we obtain the following:

\begin{Theorem}The natural homomorphism \begin{gather*}\widetilde{\Gamma}_{{\rm DH},X} \longrightarrow
\operatorname{Aut}({\mathcal M}_{\rm DH}(X))^{\theta_X} ,\end{gather*} where $\operatorname{Aut}({\mathcal
M}_{\rm DH}(X))^{\theta_X}$ is defined in \eqref{atx}, is an isomorphism.
\end{Theorem}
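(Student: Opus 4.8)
The plan is to mirror the proof of Proposition~\ref{propo1}, replacing the cohomological input for ${\mathcal M}_{\rm Hod}$ by its analogue for ${\mathcal M}_{\rm DH}$ and dealing separately with those automorphisms that interchange the two distinguished fibers $f^0$ and $f^\infty$. First I would check that $\rho$ is well defined: the identity component $\operatorname{Aut}({\mathcal M}_{\rm DH})_0$ fixes $\theta_X$ because each of its elements is homotopic to the identity and hence acts trivially on $H^2({\mathcal M}_{\rm DH}, {\mathbb Z})$, while $\Gamma_X$ fixes $\theta_X$ by the criterion recalled just before the theorem, namely that for $t \in \operatorname{Aut}(\operatorname{Pic}^0(X))$ the associated automorphism of the form \eqref{Teq2} preserves $\theta_X$ if and only if $t \in \Gamma_X$. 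Thus $\rho$ indeed maps $\widetilde{\Gamma}_{{\rm DH},X}$ into $\operatorname{Aut}({\mathcal M}_{\rm DH}(X))^{\theta_X}$, and it remains to prove that it is bijective.

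For injectivity the key input is that $\phi_{{\rm DH},X}$ induces an isomorphism $\phi_{{\rm DH},X}^*\colon H^1(\operatorname{Pic}^0(X), {\mathbb C}) \longrightarrow H^1({\mathcal M}_{\rm DH}, {\mathbb C})$. This holds because the fiber of $\phi_{{\rm DH},X}$ over a point of $\operatorname{Pic}^0(X)$ is, over each $\lambda \in {\mathbb C}P^1$, an affine space modeled on $H^0(X, K_X)$, so the fiber is the total space of an affine bundle over ${\mathbb C}P^1$ and has vanishing first cohomology; the Leray spectral sequence then yields the claim. Now take $\gamma \in \operatorname{kernel}(\rho)$, let $\gamma' \in \Gamma_X$ be its image under the projection $\widetilde{\Gamma}_{{\rm DH},X} \longrightarrow \Gamma_X$, and let $\gamma'' \in \widetilde{\Gamma}_{{\rm DH},X}$ be the image of $\gamma'$ under the inclusion $\Gamma_X \hookrightarrow \widetilde{\Gamma}_{{\rm DH},X}$, so $\gamma$ and $\gamma''$ differ by an element of $\operatorname{Aut}({\mathcal M}_{\rm DH})_0$. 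Since $\gamma$ acts trivially on $H^1({\mathcal M}_{\rm DH}, {\mathbb C})$ and $\operatorname{Aut}({\mathcal M}_{\rm DH})_0$ acts trivially there (being connected), the element $\gamma''$ acts trivially on $H^1({\mathcal M}_{\rm DH}, {\mathbb C})$, hence $\gamma'$ acts trivially on $H^1(\operatorname{Pic}^0(X), {\mathbb C})$. As an origin-fixing automorphism of an abelian variety is determined by its action on $H^1$, this gives $\gamma' = 1$, so $\gamma \in \operatorname{Aut}({\mathcal M}_{\rm DH})_0$, and then $\rho(\gamma) = \operatorname{id}$ forces $\gamma = 1$.

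For surjectivity, take $T \in \operatorname{Aut}({\mathcal M}_{\rm DH}(X))^{\theta_X}$. By Lemma~\ref{tau-lemma}, $T$ covers either $\lambda \longmapsto \tau\lambda$ or $\lambda \longmapsto \tau/\lambda$. In the first case $T(f^0) = f^0$ and $T(f^\infty) = f^\infty$, so by Proposition~\ref{prop3} (and the normalizations in its proof and in the proof of Theorem~\ref{thmDHa}) I may, after composing with a suitable element of $\operatorname{Aut}({\mathcal M}_{\rm DH})_0$, assume $T$ is of the form \eqref{Teq2} for some $t \in \operatorname{Aut}(\operatorname{Pic}^0(X))$ with $t(0) = 0$. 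Since $\operatorname{Aut}({\mathcal M}_{\rm DH})_0$ fixes $\theta_X$, the hypothesis $T \in \operatorname{Aut}^{\theta_X}$ forces the automorphism \eqref{Teq2} to fix $\theta_X$, whence $t \in \Gamma_X$ and $T$ lies in the image of $\rho$. It remains to exclude the second case, where $T(f^0) = f^\infty$: here I would use the sign relation $\theta_X = -\theta_{\overline X}$. The restriction of $\theta_X$ to $f^0$ is the polarization class coming from the orientation of $X$, while its restriction to $f^\infty$ is the class coming from the opposite orientation of $\overline X$; composing $T$ with the canonical isomorphism ${\mathcal M}_{\rm DH}(X) \stackrel{\sim}{\longrightarrow} {\mathcal M}_{\rm DH}(\overline X)$ reduces it to a fiber-preserving automorphism, and tracking the sign shows $T^*\theta_X = -\theta_X$. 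As $\theta_X \neq 0$, this contradicts $T \in \operatorname{Aut}({\mathcal M}_{\rm DH}(X))^{\theta_X}$, so no fiber-interchanging automorphism fixes $\theta_X$.

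The step I expect to be the main obstacle is the last one: making the computation $T^*\theta_X = -\theta_X$ for fiber-interchanging automorphisms fully rigorous, since it requires comparing the pulled-back polarization classes on $f^0$ and $f^\infty$ through the gluing \eqref{barTeq2} and keeping careful track of the orientation reversal built into the passage from $X$ to $\overline X$. A secondary technical point is the verification that $\phi_{{\rm DH},X}$ induces an isomorphism on $H^1$, which rests on the affine-bundle description of its fibers over ${\mathbb C}P^1$. Once these two facts are in place, the remaining arguments are formally identical to those in the proof of Proposition~\ref{propo1}.
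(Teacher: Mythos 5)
Your proposal follows the paper's own route in all essentials: injectivity is the argument of Proposition~\ref{propo1} transplanted to ${\mathcal M}_{\rm DH}$ (your Leray/affine-bundle justification that $\phi_{{\rm DH},X}$ induces an isomorphism on $H^1$ is a correct filling-in of a detail the paper compresses into ``the proof is analogous to the proof of Proposition~\ref{propo1}''), and the fiber-preserving case of surjectivity via Proposition~\ref{prop3} together with the criterion that the automorphism \eqref{Teq2} fixes $\theta_X$ if and only if $t \in \Gamma_X$ is the same reduction the paper makes. Where the paper first shows that \emph{any} automorphism preserves $f^0 \cup f^\infty$ by a compact-submanifold argument, you invoke Lemma~\ref{tau-lemma}; both give the same dichotomy, so this is only a cosmetic difference.

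The one step that would fail as written is the exclusion of fiber-interchanging automorphisms through the identity $T^*\theta_X = -\theta_X$. That identity is false in general. If $T$ interchanges $f^0$ and $f^\infty$, then $T\vert_{f^0}$ covers some biholomorphism $t \colon \operatorname{Pic}^0(X) \longrightarrow \operatorname{Pic}^0(\overline{X})$ with respect to the projections $\pi \colon f^0 \longrightarrow \operatorname{Pic}^0(X)$ and $\pi' \colon f^\infty \longrightarrow \operatorname{Pic}^0(\overline{X})$, and since $\theta_X\vert_{f^\infty} = \pi'^*\big({-}\theta_0^{\overline X}\big)$, where $\theta_0^{\overline X}$ denotes the principal polarization of $\operatorname{Pic}^0(\overline{X})$, one only gets $(T^*\theta_X)\vert_{f^0} = -\pi^*\big(t^*\theta_0^{\overline X}\big)$. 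The class $t^*\theta_0^{\overline X}$ is ample but need not equal $\theta_0$, so $T^*\theta_X$ need not equal $-\theta_X$ globally. The correct (and sufficient) statement is the one the paper uses: $\theta_X$ restricts on $f^0$ to the pullback of the ample class $\theta_0$ and on $f^\infty$ to the pullback of a negative class; hence $T^*\theta_X = \theta_X$ would force $t^*\theta_0^{\overline X} = -\theta_0$, i.e., a biholomorphism of projective manifolds pulling back an ample class to a negative class, which is impossible. Your own sketch (comparing the pulled-back polarization classes on $f^0$ and $f^\infty$ and tracking the orientation reversal) is exactly this argument; you only need to replace the exact sign identity by this positivity comparison of restrictions, after which your proof closes and coincides with the paper's.
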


\begin{proof}
Any holomorphic automorphism of ${\mathcal M}_{\rm DH}(X)$ takes the union
$f^0\cup f^\infty$ to itself. Indeed, this follows from the following two facts:
\begin{itemize}\itemsep=0pt
\item all compact complex submanifolds of ${\mathcal M}_{\rm DH}(X)$ of dimension at least two are contained in $f^0\cup f^\infty$, and
\item the union $f^0\cup f^\infty$ is covered by compact complex submanifolds of dimension $g \geq 2$.
\end{itemize}

We next observe that any automorphism $\xi \in \operatorname{Aut}({\mathcal M}_{\rm DH}(X))^{\theta_X}$ preserves $f^0$ and $f^\infty$ indivi\-dual\-ly. To see this, let
\begin{gather*}\xi' \colon \ f^0 \longrightarrow f^\infty\end{gather*} be a biholomorphism. Since the maximal dimensional compact complex submanifolds of $f^0$ (respectively, $f^\infty$ are isomorphic to $\operatorname{Pic}^0(X)$ (respectively, $\operatorname{Pic}^0(\overline{X})$), we conclude that $\xi'$ produces a holomorphic isomorphism of $\operatorname{Pic}^0(X)$ with $\operatorname{Pic}^0(\overline{X})$. But the cohomology class of $\widetilde\theta$ in \eqref{tt} gives a negative class on $\operatorname{Pic}^0(\overline{X})$ after identifying $\operatorname{Pic}^0(\overline{X})$ with $\operatorname{Hom}(\pi_1(X), \text{U}(1)) = \operatorname{Hom}(H_1(X, {\mathbb Z}), \text{U}(1))$. On the other hand, the class $\theta_0 \in H^2(\operatorname{Pic}^0(X), {\mathbb Z})$ given by $\widetilde\theta$ is ample. A holomorphic isomorphism between complex projective manifolds can't take an ample class to a~negative class. Hence any automorphism $\xi \in \operatorname{Aut}({\mathcal M}_{\rm DH}(X))^{\theta_X}$ takes~$f^0$ (respectively,~$f^\infty$) to~$f^0$ (respectively,~$f^\infty$).

Now the proof is analogous to the proof of Proposition \ref{propo1}.
\end{proof}

\section{Algebraic dimension of the Deligne--Hitchin moduli space}

\begin{Lemma}\label{lem-m} Consider the constant ``trivial'' section
\begin{gather*}
s \colon \ {\mathbb C}P^1 \longrightarrow {\mathcal M}_{\rm DH}(X) ,\qquad \lambda \longmapsto (\lambda, \overline{\partial}, \lambda\partial)
\end{gather*}
of $f$ $($see \eqref{mapf}$)$. There exists an analytic open neighborhood ${\mathcal U} \subset {\mathcal M}_{\rm DH}(X)$ containing $s\big({\mathbb C}P^1\big)$ and an analytic open neighborhood $\mathcal V$ of the zero section of
\begin{gather*}{\mathcal O}_{{\mathbb C}P^1}(1)^{\oplus 2g} \longrightarrow {\mathbb C}P^1\end{gather*}
such that $\mathcal U$ and $\mathcal V$ are biholomorphic.
\end{Lemma}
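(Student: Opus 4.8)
The plan is to identify the promised biholomorphism with the map that sends a section of the normal bundle to the nearby section of $f$ it determines, and then to verify that this map is a local biholomorphism near the trivial section $s$. The starting point is the computation already recorded in Lemma~\ref{lemdeg}: the tangent bundle of ${\mathcal M}_{\rm DH}$ along any section restricts to $f^*({\mathcal O}_{{\mathbb C}P^1}(2)\oplus{\mathcal O}_{{\mathbb C}P^1}(1)^{\oplus 2g})$, and the normal bundle of the trivial section $s$ is therefore ${\mathcal N}\cong{\mathcal O}_{{\mathbb C}P^1}(1)^{\oplus 2g}$. Since $H^1({\mathbb C}P^1,{\mathcal O}_{{\mathbb C}P^1}(1))=0$, the section $s$ is unobstructed and moves in a family whose tangent space at $s$ is $H^0({\mathbb C}P^1,{\mathcal N})=H^0({\mathbb C}P^1,{\mathcal O}_{{\mathbb C}P^1}(1)^{\oplus 2g})\cong {\mathbb C}^{2g+(2g)}$, matching the fibre dimension of ${\mathcal O}_{{\mathbb C}P^1}(1)^{\oplus 2g}$ over any point of ${\mathbb C}P^1$; this is the dimension bookkeeping that makes the two spaces candidates for being biholomorphic.

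Concretely, I would use the explicit parametrisation of sections from Lemma~\ref{sections}: every section of $f$ is of the form
\begin{gather*}
s_{(\alpha,\beta,\omega,\eta)}(\lambda)=\big[\lambda,\ \overline{\partial}+\overline{\omega}+\lambda\overline{\eta},\ \lambda(\partial+\alpha)+\beta\big],
\end{gather*}
with $\alpha,\beta,\omega,\eta\in H^0(X,K_X)$. The trivial section $s$ corresponds to $\alpha=\beta=\omega=\eta=0$. The space of sections is thus canonically the vector space $H^0(X,K_X)^{\oplus 4}$, but modulo the lattice $\Lambda$ of Lemma~\ref{almost-id} (the gluing freedom $\lambda\mapsto(\lambda,\overline{\partial},\lambda(\partial+\gamma'))$, $\gamma\in H^1(X,{\mathbb Z})$) one sees the genuine local moduli. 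The key point is to exhibit a map ${\mathcal V}\to{\mathcal M}_{\rm DH}$ from an open neighborhood ${\mathcal V}$ of the zero section of ${\mathcal O}_{{\mathbb C}P^1}(1)^{\oplus 2g}$, sending a section $v\colon{\mathbb C}P^1\to{\mathcal O}_{{\mathbb C}P^1}(1)^{\oplus 2g}$ to the point $s_{v}(\lambda)$ evaluated fibrewise; this is exactly an exponential-type map for the normal bundle of $s$. I would verify directly that this map is holomorphic and that its differential at the zero section is the canonical isomorphism $H^0({\mathbb C}P^1,{\mathcal N})\to T_s(\text{sections})$, hence an isomorphism on tangent spaces pointwise.

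The main obstacle is making the local biholomorphism precise in both the base and fibre directions simultaneously: the claimed ${\mathcal U}$ is an open set in the total space ${\mathcal M}_{\rm DH}$, while ${\mathcal V}$ lives in the total space of the bundle ${\mathcal O}_{{\mathbb C}P^1}(1)^{\oplus 2g}$, so the biholomorphism must intertwine the fibration $f$ with the bundle projection to ${\mathbb C}P^1$ and restrict to an isomorphism on each fibre $f^\lambda$ near $s(\lambda)$. The clean way to handle this is the standard tubular-neighborhood argument for the normal bundle of a compact submanifold whose normal bundle is $H^1$-acyclic with positive summands: because ${\mathcal N}\cong{\mathcal O}_{{\mathbb C}P^1}(1)^{\oplus 2g}$ has no $H^1$, the formal neighborhood of $s({\mathbb C}P^1)$ is isomorphic to that of the zero section of ${\mathcal N}$, and one upgrades the formal isomorphism to an analytic one on a genuine neighborhood by invoking that $s({\mathbb C}P^1)\cong{\mathbb C}P^1$ is compact with ample normal bundle. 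I would therefore present the argument as: (i) compute ${\mathcal N}\cong{\mathcal O}_{{\mathbb C}P^1}(1)^{\oplus 2g}$ from Lemma~\ref{lemdeg}; (ii) note the vanishing $H^1({\mathbb C}P^1,{\mathcal N})=0$, giving unobstructedness and agreement of dimensions; (iii) construct the evaluation/exponential map $\mathcal V\to{\mathcal M}_{\rm DH}$ using the explicit sections of Lemma~\ref{sections}; and (iv) conclude via the holomorphic inverse function theorem that it restricts to a biholomorphism between suitable neighborhoods ${\mathcal V}$ and ${\mathcal U}$.
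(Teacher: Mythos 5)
Your proposal is correct and is essentially the paper's own argument: the paper simply writes your ``evaluation map'' down explicitly in coordinates, defining $\Phi(\lambda,\chi_1,\dots,\chi_g,\alpha_1,\dots,\alpha_g)=\big(\lambda,\,\overline{\partial}-\sum_i\chi_i\overline{\omega}_i,\,\lambda\partial+\sum_i\alpha_i\omega_i\big)$ via a basis $\omega_1,\dots,\omega_g$ of $H^0(X,K_X)$ and the two standard trivializations of ${\mathcal O}_{{\mathbb C}P^1}(1)$, notes that it extends holomorphically over $\lambda=\infty$ (this is exactly where the twist by ${\mathcal O}_{{\mathbb C}P^1}(1)$, visible in the $\overline{\omega}+\lambda\overline{\eta}$ and $\beta+\lambda\alpha$ entries of your Lemma~\ref{sections} parametrization, is used), and concludes that this map is a biholomorphism from a neighborhood of the zero section onto a neighborhood of $s({\mathbb C}P^1)$ --- the same content as your steps (iii)--(iv). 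The one caveat is your aside that one could instead use a ``standard tubular-neighborhood argument'' upgrading a formal isomorphism to an analytic one: that route should not be leaned on (holomorphic tubular neighborhoods do not exist in general, $H^1({\mathbb C}P^1,{\mathcal N})=0$ by itself does not linearize the formal neighborhood since the higher obstruction groups involve ${\mathcal N}\otimes\operatorname{Sym}^k{\mathcal N}^*$ which has nonzero $H^1$ for $k\geq 3$, and the formal principle for ample normal bundles is a deep and partly open matter), but your concrete plan bypasses it, exactly as the paper does.
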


\begin{proof} Identify ${\mathbb C}P^1$ with lines in ${\mathbb C}^2$ by sending any $\lambda \in {\mathbb C} \subset {\mathbb C}P^1$ to the line generated by $(\lambda, 1) \in {\mathbb C}^2$ while the point $\infty = {\mathbb C}P^1 \setminus {\mathbb C}$ is sent to the line in ${\mathbb C}^2$ generated by $(1, 0)$. The f\/iber of the line bundle ${\mathcal O}_{{\mathbb C}P^1}(1)$ over any point $z \in {\mathbb C}P^1$ is the dual line $(\ell^z)^*$, where $\ell^z \subset {\mathbb C}^2$ is the line corresponding to $z$. Hence ${\mathcal O}_{{\mathbb C}P^1}(1)$ has a canonical trivialization over $\mathbb C$ (respectively, $\mathbb CP^1\setminus\{0\}$) given by $\lambda \longmapsto \{c\cdot (\lambda, 1) \longmapsto c\}$ (respectively, $\lambda \longmapsto \{c\cdot (1, 1/\lambda)  \longmapsto c\}$).

Let $\omega_1, \dots ,\omega_g$ be a basis of $H^0(X, K_X)$, where $g = \operatorname{genus}(X)$, and let $\overline{\omega}_1, \dots , \overline{\omega}_g$ be the corresponding conjugate basis of $\overline{H^0(X, K_X)}$. Now let
\begin{gather*}
\Phi \colon \ {\mathbb C}\times {\mathbb C}^{2g} \longrightarrow {\mathcal M}_{\rm DH}(X),\\
\hphantom{\Phi \colon}{} \ (\lambda, \chi_1, \dots ,\chi_g, \alpha_1, \dots , \alpha_g) \longmapsto
\left(\lambda, \overline{\partial}-\sum_{i=1}^g\chi_i\overline{\omega}_i, \lambda \partial+\sum_{i=1}^g\alpha_i\omega_i\right)
\end{gather*}
be the map. This map $\Phi$ extends to a holomorphic map
\begin{gather*}
{\mathbb C}\times {\mathbb C}^{2g} \subset {\mathcal O}_{{\mathbb C}P^1}(1)^{\oplus 2g} \stackrel{\widetilde\Phi}{\longrightarrow}
{\mathcal M}_{\rm DH}(X) ;
\end{gather*}
recall that the trivial line bundle ${\mathbb C}\times {\mathbb C} \longrightarrow {\mathbb C}$ is identif\/ied with the restriction of ${\mathcal O}_{{\mathbb C}P^1}(1)$ to ${\mathbb C} \subset {\mathbb C}P^1$. This holomorphic map $\widetilde\Phi$ is a biholomorphism from an analytic neighborhood of the zero section of ${\mathcal O}_{{\mathbb C}P^1}(1)^{\oplus 2g} \longrightarrow {\mathbb C}P^1$ to a neighborhood of $s({\mathbb C}P^1)$.
\end{proof}

The algebraic dimension of a compact connected complex manifold $Y$ is the transcendence degree of the f\/ield of global meromorphic functions on $Y$ over the f\/ield $\mathbb C$. The algebraic dimension is bounded above by the complex dimension. If the algebraic dimension of $Y$ coincides with the complex dimension of $Y$, then $Y$ is called Moishezon.

The def\/inition of algebraic dimension does not make sense in general if $Y$ is not compact. For example the transcendence degree, over the f\/ield $\mathbb C$, of the f\/ield of global meromorphic functions on the open unit disk is inf\/inite. However, a theorem of Verbitsky says that the notion of algebraic dimension continues to remain valid, and it is bounded above by the complex dimension, if $Y$ contains an ample rational curve \cite[p.~329, Theorem~1.4]{Ve}. In particular, the notion of algebraic dimension extends to the twistor spaces.

\begin{Proposition}\label{prop-m}
The algebraic dimension of ${\mathcal M}_{\rm DH}(X)$ coincides with $\dim {\mathcal M}_{\rm DH}(X)$, in other words, ${\mathcal M}_{\rm DH}(X)$ is Moishezon.
\end{Proposition}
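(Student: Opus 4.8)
The plan is to combine Verbitsky's theorem with the explicit local model of ${\mathcal M}_{\rm DH}(X)$ near an ample rational curve furnished by Lemma~\ref{lem-m}. The first step is to locate an ample rational curve. The natural candidate is the image $C_0 = s({\mathbb C}P^1)$ of the trivial section $\lambda \longmapsto (\lambda, \overline{\partial}, \lambda\partial)$ appearing in Lemma~\ref{lem-m}. Applying Lemma~\ref{lemdeg} with $g_\Sigma = 0$ and $\deg(i) = 1$ gives $\deg({\mathcal N}) = (2g+2)\cdot 1 - 2 = 2g$, and in fact the description of $T{\mathcal M}_{\rm DH}$ recorded in the proof of Lemma~\ref{lemdeg} (following the computation of \cite{HKLR}) shows that the normal bundle of $C_0$ is $\mathcal{O}_{{\mathbb C}P^1}(1)^{\oplus 2g}$, which is ample. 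Thus $C_0$ is an ample rational curve, so Verbitsky's theorem \cite{Ve} applies: the algebraic dimension of ${\mathcal M}_{\rm DH}(X)$ is well defined and bounded above by $\dim {\mathcal M}_{\rm DH}(X) = 1 + 2g$. It remains to establish the reverse inequality, i.e., to produce $1+2g$ algebraically independent meromorphic functions.

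For the lower bound I would pass to the local model. By Lemma~\ref{lem-m} an open neighborhood $\mathcal{U}$ of $C_0$ is biholomorphic, via $\widetilde{\Phi}$, to an open neighborhood $\mathcal{V}$ of the zero section of the total space of $E := \mathcal{O}_{{\mathbb C}P^1}(1)^{\oplus 2g} \longrightarrow {\mathbb C}P^1$. This total space is a smooth irreducible quasi-projective variety of dimension $1 + 2g$, so its field of rational functions has transcendence degree $1 + 2g$ over ${\mathbb C}$. Restricting these rational functions to the open subset $\mathcal{V}$ and transporting them through $\widetilde{\Phi}$ yields meromorphic functions on $\mathcal{U}$; since $\mathcal{V}$ is a nonempty open subset of an irreducible variety, no algebraic relation among the restrictions can hold without holding identically, so the transcendence degree is preserved and equals $1 + 2g$. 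One of these functions may be taken to be the coordinate $\lambda$, which is the restriction to $\mathcal{U}$ of the global twistor fibration $f$ in~\eqref{mapf}.

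The main obstacle, and the point where Verbitsky's theorem does the essential work, is to upgrade these \emph{locally defined} functions near $C_0$ to \emph{global} meromorphic functions on ${\mathcal M}_{\rm DH}(X)$ without loss of transcendence degree. This is a genuine difficulty: a meromorphic function on an open subset of a rational curve need not extend across the curve, and indeed for a generic K\"ahler twistor space, which also contains ample rational curves, no nonconstant global meromorphic functions exist at all. What distinguishes the present situation is precisely the \emph{linearizability} of the neighborhood established in Lemma~\ref{lem-m}: it forces the germ of ${\mathcal M}_{\rm DH}(X)$ along $C_0$ to carry a function field of full transcendence degree $1+2g$, and Verbitsky's framework then identifies the algebraic dimension of ${\mathcal M}_{\rm DH}(X)$ with the transcendence degree computed in a neighborhood of the ample curve, so that these functions count toward the global algebraic dimension. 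Granting this, the algebraic dimension is at least $1 + 2g$, and combined with the upper bound from the first step it equals $1 + 2g$; hence ${\mathcal M}_{\rm DH}(X)$ is Moishezon.
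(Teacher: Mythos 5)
Your proposal is correct and takes essentially the same route as the paper: both rest on Lemma~\ref{lem-m} (the linear model near the ample twistor line $s\big({\mathbb C}P^1\big)$) combined with Verbitsky's holography principle, which is precisely the statement you ``grant'' at the end --- it is \cite[p.~337, Theorem~3.4]{Ve}, the result the paper cites for exactly this globalization step. The only cosmetic difference is that you produce the full function field on the neighborhood ${\mathcal V}$ by restricting rational functions from the quasi-projective total space of ${\mathcal O}_{{\mathbb C}P^1}(1)^{\oplus 2g}$, whereas the paper passes through the projective compactification $P\big({\mathcal O}_{{\mathbb C}P^1}(1)^{\oplus 2g}\oplus{\mathcal O}_{{\mathbb C}P^1}\big)$ and invokes the same Theorem~3.4 there as well.
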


\begin{proof} Consider the compactif\/ication $P\big({\mathcal O}_{{\mathbb C}P^1}(1)^{\oplus 2g}\oplus{\mathcal O}_{{\mathbb C}P^1}\big)$ of ${\mathcal O}_{{\mathbb C}P^1}(1)^{\oplus 2g}$. Since it is a~complex projective manifold, it is Moishezon. Now from \cite[p.~337, Theorem~3.4]{Ve} we know that any analytic neighborhood of the zero section of ${\mathcal O}_{{\mathbb C}P^1}(1)^{\oplus 2g}$ is Moishezon, because the zero section is an ample rational curve. Therefore, from Lemma \ref{lem-m} and \cite[p.~337, Theorem~3.4]{Ve} it follows that ${\mathcal M}_{\rm DH}(X)$ is Moishezon.
\end{proof}

\subsection*{Acknowledgements}

We thank the referees for their detailed and helpful comments. The work begun during a~research stay of the second author at the Tata Institute of Fundamental Research and he would like to thank the institute for its hospitality. SH is partially supported by DFG HE 6818/1-2. The f\/irst author is partially supported by a~J.C.~Bose Fellowship.

\pdfbookmark[1]{References}{ref}
\LastPageEnding

\end{document}